\newcommand{\added}[1]{{\color{added}{}#1}}
\newcommand{\deleted}[1]{\ifmmode\text{\color{deleted}\sout{\ensuremath{#1}}}\else{\color{deleted}\sout{#1}}\fi}
\theoremstyle{plain} 
\newtheorem{theointro}{Th\'eor\`eme}
\newtheorem{lemmintro}[theointro]{Lemme}
\theoremstyle{definition} 
\newenvironment{nota}{\begin{enonce}[definition]{Notation}}{\end{enonce}}
\DeclareFontFamily{U}{mathc}{}
\DeclareFontShape{U}{mathc}{m}{it}%
{<->s*[1.03] mathc10}{}
\DeclareMathAlphabet{\mathcal}{U}{mathc}{m}{it}
\newcommand{\N}{\ensuremath{\mathbf{N}}}
\newcommand{\Z}{\ensuremath{\mathbf{Z}}}
\newcommand{\Zud}{\ensuremath{\mathbf{Z}[\frac12]}}
\newcommand{\Q}{\ensuremath{\mathbf{Q}}}
\newcommand{\Qbar}{\ensuremath{\overline{\mathbf{Q}}}}
\newcommand{\R}{\ensuremath{\mathbf{R}}}
\newcommand{\C}{\ensuremath{\mathbf{C}}}
\newcommand{\A}{\ensuremath{\mathbf{A}}}
\renewcommand{\P}{\ensuremath{\mathbf{P}}}
\newcommand{\G}{\ensuremath{\mathbf{G}}}
\newcommand{\E}[2]{\ensuremath{\mathbf{A}^{#1,\mathrm{an}}_{#2}}}
\newcommand{\EP}[2]{\ensuremath{\mathbf{P}^{#1,\mathrm{an}}_{#2}}}
\newcommand{\wti}[1]{\ensuremath{\widetilde{#1}}}
\newcommand{\too}{\longrightarrow}
\newcommand{\simtoo}{\overset{\sim}{\longrightarrow}}
\newcommand{\mapstoo}{\longmapsto}
\DeclareMathOperator{\Spec}{Spec}
\DeclareMathOperator{\Gal}{Gal}
\newcommand{\Rat}{\textrm{Rat}}
\DeclareMathOperator*{\colim}{colim}
\newcommand*\diff{\mathop{}\!\mathrm{d}}
\DeclareMathOperator{\PGL}{PGL}
\DeclareMathOperator{\SL}{SL}
\newcommand{\pr}{\mathrm{pr}}
\DeclareMathOperator{\smax}{smax}
\newcommand{\Seg}{\textrm{Seg}}
\newcommand{\cA}{\mathcal{A}}
\newcommand{\cC}{\mathcal{C}}
\newcommand{\cD}{\mathcal{D}}
\newcommand{\cE}{\mathcal{E}}
\newcommand{\cH}{\mathcal{H}}
\newcommand{\cM}{\mathcal{M}}
\newcommand{\cN}{\mathcal{N}}
\newcommand{\cO}{\mathcal{O}}
\newcommand{\cP}{\mathcal{P}}
\newcommand{\cU}{\mathcal{U}}
\newcommand{\cX}{\mathcal{X}}
\newcommand{\cY}{\mathcal{Y}}
\newcommand{\cZ}{\mathcal{Z}}
\newcommand{\cn}[2]{\ensuremath{\llbracket{#1},{#2}\rrbracket}}
\DeclarePairedDelimiter{\intoo}{]}{[}
\DeclarePairedDelimiter{\intof}{]}{]}
\DeclarePairedDelimiter{\intfo}{[}{[}
\DeclarePairedDelimiter{\intff}{[}{]}
\newcommand{\la}{\ensuremath{\langle}}
\newcommand{\ra}{\ensuremath{\rangle}}
\newcommand{\eps}{\ensuremath{\varepsilon}}
\newcommand{\disp}{\displaystyle}
\DeclarePairedDelimiter\abs{\lvert}{\rvert}
\newcommand\labs[1]{\log(\abs{#1})}
\DeclarePairedDelimiter\bigabs{\big\lvert}{\big\rvert}
\newcommand\biglabs[1]{\log\big(\bigabs{#1}\big)}
\DeclarePairedDelimiter\Bigabs{\Big\lvert}{\Big\rvert}
\DeclarePairedDelimiter\norm{\lVert}{\rVert}
\newcommand\wc{{\mkern 2mu\cdot\mkern 2mu}}
\newcommand\va{\abs{\wc}}
\newcommand\nm{\norm{\wc}}
\newcommand\cf{\textit{cf}.}
\newcommand\an{\mathrm{an}}
\newcommand{\fonction}[5]{\begin{array}{cccc}
#1 \colon &#2&\too&#3\\
&#4&\mapstoo&#5
\end{array}}
\newcommand{\JP}[1]{\ifnum\commentaires = 1{\color{magenta}{#1}}\fi}
\def \spectriv(#1,#2,#3,#4,#5,#6){
\foreach \x in {0,...,#6}
\draw[line width=.001pt] ({#1},{#2}) -- ({#1+#5*cos((\x)/(#6)*(360/#4)+#3)},{#2+#5*sin((\x)/(#6)*(360/#4)+#3)}) ;}
\def \spectrivcentre(#1,#2,#3,#4,#5,#6){
\foreach \x in {-#6,...,#6}
\draw[line width=.001pt] ({#1},{#2}) -- ({#1+#5*cos((\x)/(#6)*(360/#4)+#3)},{#2+#5*sin((\x)/(#6)*(360/#4)+#3)}) ;}
\def \spectrivcentreenplus(#1,#2,#3,#4,#5,#6,#7,#8,#9){
\foreach \y in {-#5,...,#5}
\spectrivcentre(#1+#6*cos((\y)/(#5)*(360/#4)+#3),#2+#6*sin((\y)/(#5)*(360/#4)+#3),(\y)/(#5)*(360/#4)+#3,#7,#8,#9) 
;}
\begin{document}

\title[Dynamique analytique sur~$\Z$. II]{Dynamique analytique sur~$\Z$. II~: \'Ecart uniforme entre Latt\`es et conjecture de Bogomolov-Fu-Tschinkel.}
\alttitle{Analytic dynamics over~$\Z$. II: Uniform gap between Latt\`es and conjecture of Bogomolov-Fu-Tschinkel.}

\author{J\'er\^ome Poineau}
\address{Normandie Univ., UNICAEN, CNRS, Laboratoire de math\'ematiques Nicolas Oresme, 14000 Caen, France}
\email{\href{mailto:jerome.poineau@unicaen.fr}{jerome.poineau@unicaen.fr}}
\urladdr{\url{https://poineau.users.lmno.cnrs.fr/}}

\date{\today}

\makeatletter
\@namedef{subjclassname}{Classification math\'ematique par sujets \textup{(2020)}}
\makeatother

\subjclass{11G05, 11G50, 37P50, 37P15, 14G22}
\keywords{Espaces de Berkovich sur~$\Z$, morphismes de Latt\`es, courbes elliptiques, points de torsion, \'energie mutuelle, th\'eorie d'Arakelov}
\altkeywords{Berkovich spaces over~$\Z$, Latt\`es morphisms, elliptic curves, torsion points, mutual energy, Arakelov theory}

\begin{abstract}
Nous montrons que l'\'energie mutuelle (ou le produit d'intersection au sens de la th\'eorie d'Arakelov) de deux syst\`emes dynamiques associ\'es \`a des morphismes de Latt\`es sur~$\mathbf{\bar Q}$ est uniform\'ement minor\'ee et en d\'eduisons une preuve d'une conjecture de Bogomolov-Fu-Tschinkel~: le nombre d'images communes de points de torsion de deux courbes elliptiques sur~$\mathbf{C}$ non isomorphes par un morphisme standard vers la droite projective est uniform\'ement born\'e. 

La d\'emonstration repose de fa\c con essentielle sur la th\'eorie des espaces de Berkovich sur~$\mathbf{Z}$ et sur un argument original permettant d'obtenir une estimation globale \`a partir d'une estimation centrale (au-dessus d'un corps trivialement valu\'e). 
\end{abstract}

\begin{altabstract}
We prove that the mutual energy (or the intersection product in the sense of Arakelov theory) of two dynamical systems associated to Latt\`es morphisms over~$\mathbf{\bar Q}$ is uniformly bounded below and deduce a proof of a conjecture of Bogomolov-Fu-Tschinkel: the number of common images of torsion points of two non-isomorphic elliptic curves over~$\mathbf{C}$ by a standard morphism to the projective line is uniformly bounded.

The proof crucially relies on the theory of Berkovich spaces over~$\mathbf{Z}$ and on an original argument allowing to obtain a global estimate from a central estimate (over a trivially valued field).
\end{altabstract}

\maketitle

\tableofcontents

\section{Introduction}

Nous poursuivons ici l'\'etude des syst\`emes dynamiques dans le cadre des espaces de Berkovich sur~$\Z$ initi\'ee dans~\cite{DynamiqueI} et en tirons quelques applications arithm\'etiques. Nous proposons en particulier une preuve du r\'esultat suivant, qui r\'esout par l'affirmative une conjecture due \`a F.~Bogomolov, H.~Fu et Yu.~Tschinkel (\cf~\cite[conjectures~2 et~12]{BFT}).

\begin{theointro}[\textmd{\emph{infra} corollaire~\ref{cor:BFTfinal}}]\label{th:BFT}
Il existe $M \in \R_{>0}$ telle que, pour toutes courbes elliptiques~$E_{a}$ et~$E_{b}$ sur~$\C$ et tous rev\^etements doubles $\pi_{a} \colon E_{a}\to \P^1_{\C}$ et $\pi_{b} \colon E_{b}\to \P^1_{\C}$ tels que $\pi_{a}(E_{a}[2]) \ne \pi_{b}(E_{b}[2])$, on ait
\[ \sharp \big( \pi_{a}(E_{a}[\infty]) \cap \pi_{b}(E_{b}[\infty])\big) \le M.  \]
\end{theointro}

Notre strat\'egie s'inspire directement de celle mise en \oe uvre dans~\cite{DKY}, o\`u L.~DeMarco, H.~Krieger et H.~Ye d\'emontrent le th\'eor\`eme~\ref{th:BFT} pour des courbes elliptiques~$E_{a}$ et~$E_{b}$, ou plut\^ot des paires $(E_{a},\pi_{a})$ et $(E_{b},\pi_{b})$, appartenant \`a la famille de Legendre. Il s'agit d'une approche de nature dynamique. 

Rappelons qu'\`a toute paire~$(E,\pi)$ comme ci-dessus, on peut associer un endomorphisme~$L$ de~$\P^1_{\C}$, dit de Latt\`es, caract\'eris\'e par le diagramme commutatif suivant~:
\[\begin{tikzcd}
E \ar[r, "{[}2{]}"] \ar[d, "\pi"]& E \ar[d, "\pi"]\\
\P^1_{\C} \ar[r, "L"] & \P^1_{\C}
\end{tikzcd}.\]
L'ensemble $\pi(E[\infty])$ co\"incide alors avec l'ensemble des points pr\'ep\'eriodiques de~$L$. Il s'agit donc de comparer les syst\`emes dynamiques associ\'es \`a deux morphismes de Latt\`es distincts sur~$\P^1$. 

Si l'on remplace le corps des nombres complexes~$\C$ par le corps des nombres alg\'ebriques~$\Qbar$, on dispose d'outils arithm\'etiques qui permettent d'aborder le probl\`eme. En particulier, la th\'eorie d'Arakelov permet d'associer \`a deux syst\`emes dynamiques~$\varphi_{a}$ et~$\varphi_{b}$ sur~$\P^1$ un nombre r\'eel positif $\la \varphi_{a},\varphi_{b} \ra$\footnote{Il s'agit pr\'ecis\'ement du produit d'intersection des premi\`eres classes de Chern des fibr\'es m\'etris\'es associ\'es \`a~$\varphi_{a}$ et~$\varphi_{b}$.}, qui se comporte comme une distance (ou plut\^ot son carr\'e). Dans le cas des morphismes de Latt\`es associ\'es \`a la multiplication par~2, cette distance est uniform\'ement minor\'ee. 

\begin{theointro}[\textmd{\emph{infra} th\'eor\`emes~\ref{th:hEm0} et~\ref{th:minorationenergie}}]\label{th:ecartLattes}
Il existe $m_{0} \in \R_{>0}$ telle que, pour tous morphismes de Latt\`es $L_{a}\ne L_{b}$ sur~$\Qbar$ comme ci-dessus, on ait 
\[ \la L_{a},L_{b} \ra \ge m_{0}.\]
\end{theointro}

Ce r\'esultat propose une explication conceptuelle \`a la conjecture de Bogomolov-Fu-Tschinkel. C'est un ingr\'edient crucial dans notre preuve du th\'eor\`eme~\ref{th:BFT}. 

La d\'emonstration du th\'eor\`eme~\ref{th:ecartLattes} fait intervenir une minoration globale par une hauteur, que nous d\'eduisons d'une minoration \og centrale \fg, au-dessus d'un corps trivialement valu\'e.

\subsection{Ingr\'edients}\label{sec:ingredients}

Le th\'eor\`eme~\ref{th:ecartLattes} porte sur des paires de morphismes de Latt\`es associ\'es \`a la multiplication par~2. Pour le d\'emontrer, nous nous pla\c cons sur un espace de modules convenable~$\cY$. D\'ecrivons-le ici en quelques mots. 

Les applications de Latt\`es~$L_{a}$ et~$L_{b}$ (et m\^eme les couples $(E_{a},\pi_{a})$ et $(E_{b},\pi_{b})$, \`a isomorphisme pr\`es), sont d\'etermin\'es par la donn\'ee des ensembles images de la 2-torsion $\pi_{a}(E_{a}[2])$ et $\pi_{b}(E_{b}[2])$. Chacun d'eux poss\`ede exactement quatre points dans~$\P^1(\Qbar)$, disons $a_{1},a_{2},a_{3},a_{4}$ pour $\pi_{a}(E_{a}[2])$ et $b_{1},b_{2},b_{3},b_{4}$ pour $\pi_{b}(E_{b}[2])$. L'espace de modules correspondant est donc, \textit{a priori}, de dimension~8.

\`A l'aide d'homographies, on peut se ramener \`a un espace de dimension $5 = 8-3$. Le proc\'ed\'e classique consisterait \`a envoyer trois des points pr\'ec\'edents sur~$0,1,\infty$. Afin de ne pas briser la sym\'etrie, nous pr\'ef\'erons ne fixer que deux des points ($a_{4}$ en $\infty$ et $b_{4}$ en 0) et garder le degr\'e de libert\'e suppl\'ementaire pour travailler \`a homoth\'etie pr\`es. En d'autres termes, l'espace de module~$\cY$ que nous consid\'erons est un sous-espace de~$\P^5_{\Zud}$ avec coordonn\'ees homog\`enes $[a_{1}\mathbin: a_{2}\mathbin: a_{3} \mathbin: b_{1} \mathbin: b_{2}\mathbin: b_{3}]$. (Nous excluons la caract\'eristique~2 pour des raisons li\'ees \`a la d\'efinition des morphismes de Latt\`es consid\'er\'es.)

\bigbreak

Un outil fondamental utilis\'e dans ce texte est l'\'energie mutuelle $\la \varphi_{a},\varphi_{b}\ra$. Nous l'introduisons ici rapidement, en suivant la pr\'esentation qu'en donnent Ch.~Favre et J.~Rivera--Letelier dans~\cite{FRLEquidistribution}. 

Pla\c cons-nous tout d'abord dans le cadre complexe. Pour deux mesures~$\mu_{a}$ et~$\mu_{b}$ suffisamment r\'eguli\`eres sur~$\P^1(\C)$, on pose 
\[ \la \mu_{a},\mu_{b} \ra := - \int_{\C^2 \setminus \textrm{Diag}} \log(\abs{x-y}) \diff (\mu_{a}-\mu_{b})(x) \otimes \diff (\mu_{a}-\mu_{b})(y).\]

On peut d\'efinir de m\^eme une \'energie mutuelle sur tout corps ultram\'etrique complet~$k$ en rempla\c cant~$\P^1(\C)$ par la droite projective~$\EP{1}{k}$ au sens de Berkovich et en g\'en\'eralisant convenablement la valeur absolue $\abs{x-y}$.

Du c\^ot\'e global, \'etant donn\'e un corps de nombres~$K$, avec un ensemble de places not\'e~$M_{K}$, et deux familles de mesures suffisamment r\'eguli\`eres $\mu_{a}=(\mu_{a,v})_{v\in M_{K}}$ et $\mu_{b}=(\mu_{b,v})_{v\in M_{K}}$, on pose 
\[ \la \mu_{a},\mu_{b} \ra = \sum_{v\in M_{K}} N_{v} \, \la \mu_{a,v},\mu_{b,v} \ra,\]
o\`u les $N_{v}$ sont des constantes de normalisation.

Dans le cadre qui nous int\'eresse, des familles de mesures naturelles sont fournies par la th\'eorie des syst\`emes dynamiques. Soient $L_{a}$ et~$L_{b}$ des morphismes de Latt\`es sur~$\P^1_{K}$. Pour $v\in M_{K}$, notons $\mu_{a,v}$ (resp. $\mu_{b,v}$) la mesure d'\'equilibre associ\'ee \`a~$L_{a}$ (resp. $L_{b}$) sur $\EP{1}{K_{v}}$ ou $\P^1(\C)$, en fonction du type, fini ou infini, de la place consid\'er\'ee. Rappelons que ces mesures sont essentiellement caract\'eris\'ees par les propri\'et\'es 
\[{L_{a}}^* \mu_{a,v} = 4 \,\mu_{a,v} \textrm{ et } {L_{b}}^* \mu_{b,v} = 4 \,\mu_{b,v},\]
les morphismes $L_{a}$ et~$L_{b}$ \'etant ici de degr\'e~4. On pose alors 
\[ \la L_{a},L_{b}\ra := \la (\mu_{a,v})_{v\in M_{K}},(\mu_{b,v})_{v\in M_{K}} \ra.\]

Mentionnons que l'\'energie mutuelle permet de retrouver certaines hauteurs classiques. En effet, soit $F$ un ensemble fini de~$\bar K$ invariant par~$\Gal(\bar K/K)$. Pour $v\in M_{K}$, notons $[F]_{v}$ la mesure sur~$\EP{1}{K_{v}}$ ou $\P^1(\C)$ provenant de la mesure de probabilit\'e \'equidistribu\'ee sur les images des points de~$F$ dans~$\P^1(\bar K_{v})$ ou~$\P^1(\C)$. On a alors
\[ \la L_{a}, [F] \ra := \la (\mu_{a,v})_{v\in M_{K}},([F]_{v})_{v\in M_{K}} \ra = h_{L_{a}}(F),\]
o\`u le membre de droite d\'esigne la hauteur dynamique de~$F$ associ\'ee \`a~$L_{a}$. En particulier, on peut retrouver les points pr\'ep\'eriodiques de~$L_{a}$ comme le lieu d'annulation de $\la L_{a}, [\wc]\ra$.

\subsection{Strat\'egie de la preuve du th\'eor\`eme~\ref{th:ecartLattes}}

Le th\'eor\`eme~\ref{th:ecartLattes} constitue, selon nous, le r\'esultat le plus important de ce texte. La strat\'egie que nous mettons en \oe uvre pour le d\'emontrer pr\'esente un int\'er\^et propre et nous l'exposons ici en d\'etail. Le principe g\'en\'eral s'en exprime simplement~: les invariants arithm\'etiques globaux poss\`edent des analogues, que nous qualifierons de \emph{centraux}, d\'efinis en valuation triviale et o\`u se refl\`ete leur comportement.

Nous appliquons ici ce principe \`a l'\'energie globale $\la L_{a},L_{b} \ra$. Le point de d\'epart est la  d\'ecomposition en somme de facteurs locaux~:
\[ \la L_{a},L_{b} \ra = \sum_{v\in M_{K}} N_{v} \, \la L_{a,v},L_{b,v} \ra,\]
o\`u $K$ est un corps de nombres sur lequel~$L_{a}$ et~$L_{b}$ sont d\'efinis. 

La strat\'egie se d\'ecompose en 3 \'etapes. La premi\`ere et la derni\`ere sont g\'en\'erales et pourraient s'appliquer dans bien d'autres situations. La deuxi\`eme, en revanche, est sp\'ecifique au probl\`eme consid\'er\'e.

\bigbreak

\noindent $\blacktriangleright$ \textbf{\'Etape~1 : du discret au continu.}

La premi\`ere \'etape consiste \`a compl\'eter la famille discr\`ete $(\la L_{a},L_{b} \ra_{v})_{v\in M_{K}}$ en une famille continue. Pour ce faire, nous aurons recours \`a la th\'eorie des espaces de Berkovich sur les anneaux de Banach et, plus pr\'ecis\'ement ici, sur l'anneau des entiers~$\cO_{K}$ du corps de nombres~$K$. Elle a \'et\'e esquiss\'ee par V.~Berkovich dans~\cite{rouge}, d\'evelopp\'ee plus en d\'etails par l'auteur dans~\cite{A1Z,EtudeLocale} en ce qui concerne les aspects alg\'ebriques, puis par l'auteur en collaboration avec Th.~Lemanissier dans~\cite{CTC} en ce qui concerne les aspects topologiques et cohomologiques. Des d\'etails figurent dans \cite[section~2]{DynamiqueI}.

Rappelons que le spectre analytique~$\cM(\cO_{K})$ est d\'efini comme l'ensemble des semi-normes multiplicatives sur~$\cO_{K}$. Il contient donc les valeurs absolues normalis\'ees~$\va_{v}$ pour $v\in M_{K}$, ainsi que beaucoup d'autres points, par exemple les puissances de ces valeurs absolues ou encore la valeur absolue triviale~$\va_{0}$ (envoyant tout \'el\'ement non nul sur~1). Du point de vue topologique, l'espace~$\cM(\cO_{K})$ poss\`ede de bonnes propri\'et\'es~: s\'eparation, compacit\'e locale et globale, connexit\'e par arcs locale et globale. La topologie de~$\cM(\cO_{K})$ est de nature ad\'elique et, bien qu'\'etant ferm\'e, le point central~$\va_{0}$ se comporte comme une sorte de point g\'en\'erique, au sens o\`u ses voisinages sont tr\`es gros (\cf~figure~\ref{fig:MOK}).

\begin{figure}[!h]
\centering
\begin{tikzpicture}
\draw (0,0) -- ({3*cos(0.72*pi r)} ,{3*sin(0.72*pi r)});
\draw (0,0) -- ({3*cos(0.63*pi r)} ,{3*sin(0.63*pi r)});
\draw (0,0) -- ({3*cos(0.54*pi r)} ,{3*sin(0.54*pi r)});
\draw (0,0) -- ({3*cos(0.45*pi r)} ,{3*sin(0.45*pi r)});
\draw (0,0) -- ({3*cos(0.14*pi r)} ,{3*sin(0.14*pi r)});
\draw (0,0) -- ({3*cos(0.05*pi r)} ,{3*sin(0.05*pi r)});
\draw (0,0) -- ({3*cos(-0.63*pi r)} ,{3*sin(-0.63*pi r)});
\draw (0,0) -- ({3*cos(-0.54*pi r)} ,{3*sin(-0.54*pi r)});

\draw[fill=black] (0,0) circle (1pt);
\draw[fill=black] ({1.5*cos(0.72*pi r)} ,{1.5*sin(0.72*pi r)}) circle (1pt);
\draw[fill=black] ({1.5*cos(0.63*pi r)} ,{1.5*sin(0.63*pi r)}) circle (1pt);
\draw[fill=black] ({1.5*cos(0.54*pi r)} ,{1.5*sin(0.54*pi r)}) circle (1pt);
\draw[fill=black] ({1.5*cos(0.45*pi r)} ,{1.5*sin(0.45*pi r)}) circle (1pt);
\draw[fill=black] ({1.5*cos(0.14*pi r)} ,{1.5*sin(0.14*pi r)}) circle (1pt);
\draw[fill=black] ({1.5*cos(0.05*pi r)} ,{1.5*sin(0.05*pi r)}) circle (1pt);
\draw[fill=black] ({1.5*cos(-0.54*pi r)} ,{1.5*sin(-0.54*pi r)}) circle (1pt);
\draw[fill=black] ({1.5*cos(-0.63*pi r)} ,{1.5*sin(-0.63*pi r)}) circle (1pt);

\draw ({3.3*cos(0.75*pi r)},{3.2*sin(0.75*pi r)}) arc ({0.75*pi r}:{2.47*pi r}:3.3);
\draw ({3.3*cos(0.75*pi r)},{3.2*sin(0.75*pi r)}) arc ({-0.75*pi r}:{-0.25*pi r}:0.6);
\draw ({3.3*cos(0.75*pi r)+sqrt(2)*0.6},{3.2*sin(0.75*pi r)}) arc ({1*pi r}:{2.06*pi r}:.5);
\draw ({3.3*cos(2.47*pi r)},{3.2*sin(2.47*pi r)}) arc ({-0.05*pi r}:{-0.44*pi r}:1);

\begin{scriptsize}
\draw (-.5,0) node {$\va_{0}$};
\draw({1.5*cos(0.72*pi r)-.4} ,{1.5*sin(0.72*pi r)}) node {$\va_{v}$};
\draw ({.9*cos(0.45*pi r)+.4} ,{.9*sin(0.45*pi r)}) node {$\cdot$};
\draw ({.9*cos(0.45*pi r)+.5} ,{.9*sin(0.45*pi r)-.1}) node {$\cdot$};
\draw ({.9*cos(0.45*pi r)+.6} ,{.9*sin(0.45*pi r)-.2}) node {$\cdot$};
\draw ({1.5*cos(0.05*pi r)+.3} ,{1.5*sin(0.05*pi r)-.3}) node {$\va_{w}$};
\draw ({1*cos(-0.22*pi r)},{1*sin(-0.22*pi r)}) node {$\cdot$};
\draw ({1*cos(-0.22*pi r)-.1},{1*sin(-0.22*pi r)-.1}) node {$\cdot$};
\draw ({1*cos(-0.22*pi r)-.2} ,{1*sin(-0.22*pi r)-.2}) node {$\cdot$};
\draw ({1.1*cos(-0.8*pi r)} ,{1.1*sin(-0.8*pi r)}) node {$\cdot$};
\draw ({1.1*cos(-0.8*pi r)-.1} ,{1.1*sin(-0.8*pi r)+.1}) node {$\cdot$};
\draw ({1.1*cos(-0.8*pi r)-.2} ,{1.1*sin(-0.8*pi r)+.2}) node {$\cdot$};
\end{scriptsize}
\end{tikzpicture}
\caption{Un voisinage de~$\va_{0}$ dans $\cM(\cO_{K})$.}\label{fig:MOK}
\end{figure}

La d\'efinition d'\'energie locale propos\'ee par Ch.~Favre et J.~Rivera--Letelier est remarquablement g\'en\'erale et fait sens sur tout corps valu\'e complet. Pour tout point $x$ de $\cM(\cO_{K})$ (\`a l'exception de ceux de caract\'eristique~2 pour des raisons sp\'ecifiques aux morphismes de Latt\`es consid\'er\'es), on peut donc d\'efinir la quantit\'e $\la L_{a,x},L_{b,x} \ra$ comme l'\'energie mutuelle locale des endomorphismes induits par~$L_{a}$ et~$L_{b}$ sur la droite projective de Berkovich sur le corps~$\cH(x)$ associ\'e \`a~$x$. (Si la semi-norme~$\va_{x}$ associ\'ee \`a~$x$ est une valeur absolue, $\cH(x)$ n'est autre que le compl\'ete de~$K$ par rapport \`a~$\va_{x}$.) 

D'apr\`es \cite[corollaire~D]{DynamiqueI}, la fonction 
\[ x\in \cM(\cO_{K}) \setminus V(2) \mapstoo \la L_{a,x},L_{b,x} \ra \in \R_{\ge 0}\]
est continue. On peut donc se faire une id\'ee du comportement de la famille $(\la L_{a,v},L_{b,v} \ra)_{v\in M_{K}}$ en \'etudiant le comportement de la quantit\'e centrale correspondante~$\la L_{a,0},L_{b,0} \ra$ au-dessus de~$\va_{0}$. 

Dans la suite de ce texte, nous \'etudions la famille $(\la L_{a,v},L_{b,v} \ra)_{v\in M_{K}}$ en faisant varier non seulement~$v$, comme ci-dessus, mais \'egalement le couple $(L_{a},L_{b})$. \`A cet effet, nous travaillerons sur l'espace de modules~$\cY$ (d\'efini sur $\Z[\frac12]$) 
mentionn\'e au d\'ebut de la section~\ref{sec:ingredients}. Comme on s'y attend, il poss\`ede un analytifi\'e~$Y$ 
(sur $\cM(\Z) \setminus V(2)$). 
La propri\'et\'e de continuit\'e de l'\'energie \'enonc\'ee plus haut reste valable dans ce cadre. 

\begin{theointro}[\textmd{\emph{infra} th\'eor\`eme~\ref{th:Econtinue}}]\label{th:continuintro}
La fonction
\[ (a,b,x) \in Y \mapstoo  \la L_{a,x},L_{b,x} \ra \in \R_{\ge 0}\]
est continue.
\end{theointro}

\bigbreak

\noindent $\blacktriangleright$ \textbf{\'Etape~2 : minoration centrale.}

Dans la deuxi\`eme \'etape, nous minorons l'\'energie mutuelle $\la L_{a,0},L_{b,0} \ra$ de deux applications de Latt\`es~$L_{a,0}$ et~$L_{b,0}$ centrales, c'est-\`a-dire correspondant \`a un point de~$Y$ au-dessus de~$\va_{0}$. Elles sont d\'efinies sur un certain corps valu\'e~$k$, qui est une extension de $(\Q,\va_{0})$. 

Dans ce cadre, plusieurs simplifications se produisent, et la th\'eorie peut \^etre rendue explicite. On sait par exemple d\'ecrire les mesures d'\'equilibre associ\'ees \`a~$L_{a,0}$ et~$L_{b,0}$: elles sont proportionnelles aux mesures de Lebesgue sur deux segments~$I_{a,0}$ et~$I_{b,0}$ contenus dans~$\EP{1}{k}$. La th\'eorie de Favre-Rivera--Letelier se pr\^ete \'egalement tr\`es bien \`a des calculs concrets. Nous obtenons finalement les expressions g\'en\'erales suivantes (\cf~figure~\ref{fig:IaIb} pour les notations). 

\begin{theointro}[\textmd{\emph{infra} th\'eor\`emes~\ref{thm:Eintersectionvide} et~\ref{thm:Ecasgeneral}}]\label{th:La0Lb0}
Si $I_{a,0}\cap I_{b,0} = \emptyset$, on a 
\begin{equation*}\label{eq:I1I2disjoints}
 \la L_{a,0},L_{b,0} \ra = \frac16\, \ell_{a} + \frac16\, \ell_{b} +\frac12 \, d_{ab}  - \frac12 \frac{\ell'_{a} \ell''_{a}}{\ell_{a}} -\frac12 \frac{\ell'_{b} \ell''_{b}}{\ell_{b}}.
 \end{equation*}
Si $I_{a,0}\cap I_{b,0} \ne \emptyset$, on a 
\begin{equation*}\label{eq:I1I2intersection}
 \la L_{a,0},L_{b,0} \ra = \frac16 \ell_{a} + \frac16\ell_{b}-\ell_{ab} +\frac16 \frac{\ell_{ab}^3}{\ell_{a}\ell_{b}} -\frac12 \frac{\ell'_{a}\ell''_{a}}{\ell_{a}} - \frac12 \frac{\ell'_{b}\ell''_{b}}{\ell_{b}} - \frac12 \frac{(\ell'_{a}\ell'_{b}+\ell''_{a}\ell''_{b})\ell_{ab}}{\ell_{a}\ell_{b}}.
\end{equation*}
\end{theointro}

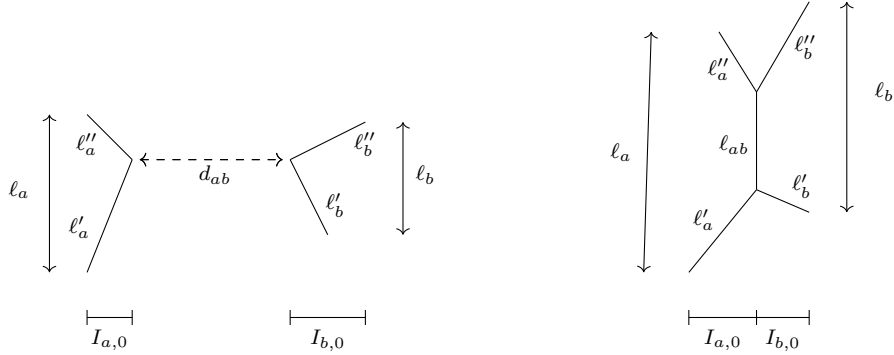
\begin{figure}[!h]
\centering
\begin{tikzpicture}
\draw  (-4.5-8,2.2)-- (-3.9-8,1.6);
\draw  (-3.9-8,1.6)-- (-4.5-8,0.1);
\draw [<->] (-5-8,2.2) -- (-5-8,0.1);
\draw (-1.8-8,1.6)-- (-0.8-8,2.1);
\draw  (-1.8-8,1.6)-- (-1.3-8,0.6);
\draw [<->] (-0.3-8,2.1) -- (-0.3-8,0.6);
\draw [<->,dashed] (-3.8-8,1.6) -- (-1.9-8,1.6);
\draw [<->,dashed] (-3.8-8,1.6) -- (-1.9-8,1.6);
\draw (-4.5-8,-0.5) -- (-3.9-8,-0.5);
\draw (-4.5-8,-0.4) -- (-4.5-8,-0.6);
\draw (-3.9-8,-0.4) -- (-3.9-8,-0.6);
\draw (-1.8-8,-0.5) -- (-0.8-8,-0.5);
\draw (-1.8-8,-0.4) -- (-1.8-8,-0.6);
\draw (-0.8-8,-0.4) -- (-0.8-8,-0.6);

\begin{scriptsize}
\draw (-5.4-8,1.2) node {$\ell_a$};
\draw (-8,1.4) node {$\ell_b$};
\draw (-4.5-8,1.8) node {$\ell''_a$};
\draw (-4.6-8,0.7) node {$\ell'_a$};
\draw (-0.8-8,1.8) node {$\ell''_b$};
\draw (-1.2-8,1) node {$\ell'_b$};
\draw (-2.8-8,1.4) node {$d_{ab}$};
\draw (-4.2-8,-0.8) node {$I_{a,0}$};
\draw (-1.3-8,-0.8) node {$I_{b,0}$};
\end{scriptsize}

\draw  (-3.6,2.5)-- (-3.6,1.2);
\draw  (-3.6,1.2)-- (-4.5,0.1);
\draw(-3.6,2.5)-- (-4.1,3.3);
\draw [<->] (-5,3.3) -- (-5.1,0.1);
\draw  (-3.6,2.5)-- (-2.9,3.7);
\draw  (-3.6,1.2)-- (-2.9,0.9);
\draw [<->] (-2.4,3.7) -- (-2.4,0.9);
\draw (-4.5,-0.5) -- (-3.6,-0.5);
\draw (-4.5,-0.4) -- (-4.5,-0.6);
\draw (-3.6,-0.4) -- (-3.6,-0.6);
\draw (-3.6,-0.5) -- (-2.9,-0.5);
\draw (-2.9,-0.4) -- (-2.9,-0.6);

\begin{scriptsize}
\draw (-5.4,1.7) node {$\ell_a$};
\draw (-1.9,2.5) node {$\ell_b$};
\draw(-3.9,1.8) node {$\ell_{ab}$};
\draw (-4.3,0.8) node {$\ell'_a$};
\draw (-4.1,2.8) node {$\ell''_a$};
\draw(-2.95,3.1) node {$\ell''_b$};
\draw (-3,1.25) node {$\ell'_b$};
\draw (-4.05,-0.8) node {$I_{a,0}$};
\draw (-3.25,-0.8) node {$I_{b,0}$};
\end{scriptsize}

\end{tikzpicture}
\caption{Configurations de~$I_{a,0}$ et $I_{b,0}$.}\label{fig:IaIb}
\end{figure}

Nous souhaitons disposer d'une minoration de $\la L_{a,0},L_{b,0} \ra$ sur toute la partie~$Y_{0}$ de l'espace de modules~$Y$ situ\'ee au-dessus de~$\va_{0}$. \`A cause de la nature ultram\'etrique du corps~$(\Q,\va_{0})$, l'\'energie $\la L_{a},L_{b} \ra_{0}$ peut \^etre nulle m\^eme lorsque les morphismes~$L_{a,0}$ et~$L_{b,0}$ sont distincts. Il est donc vain de chercher \`a la minorer par une constante strictement positive. Nous parvenons, en revanche, \`a la minorer par une expression d\'ependant de param\`etres $a_{i},b_{j}$ $(1\le i,j\le 3)$ d\'ecrivant~$Y$, de la forme
\[  \la L_{a,0},L_{b,0} \ra \ge F(\labs{a_{i}},\labs{b_{j}},\ 1\le i,j\le 3),\]
sous certaines conditions peu contraignantes.

\bigbreak

\noindent $\blacktriangleright$ \textbf{\'Etape~3: propagation du central au global.}

Dans la troisi\`eme \'etape, nous minorons l'\'energie mutuelle globale $\la L_{a},L_{b} \ra$ de deux applications de Latt\`es $L_{a}$ et $L_{b}$ associ\'ees \`a la multiplication par~2.

Nous commen\c cons par d\'emontrer un r\'esultat technique permettant de passer d'une minoration centrale, c'est-\`a-dire au-dessus de~$\va_{0}$, \`a une minoration similaire en presque toute place, en nous appuyant sur le caract\`ere ad\'elique de la topologie de~$\cM(\Z)$, et une minoration plus faible en les places restantes. 

Un ingr\'edient essentiel est l'application \emph{flot} qui, pour $\eps \in \intof{0,1}$ envoie un point~$x$ associ\'e \`a une semi-norme~$\va_{x}$ sur le point~$x^\eps$ associ\'e \`a la semi-norme~$\va_{x}^\eps$. On dit qu'un espace de Berkovich~$V$ est flottant s'il est stable par le flot. On dit qu'une fonction $g\colon V \to \R$ est $\log$-flottante si, pour tous $x\in V$ et $\eps \in \intof{0,1}$, on a $g(x^\eps) = \eps g(x)$. Des exemples typiques sont les fonctions $\labs{a_{i}}$, $\labs{b_{j}}$, ou encore $x\mapsto \la L_{a,x},L_{b,x} \ra$.

\begin{lemmintro}[\textmd{\emph{infra} lemme~\ref{lem:extensionmajoration}}]\label{lem:extensionintro}
Soient $V$ un espace de Berkovich sur~$\Z$ flottant et $\cE,f_{1},f_{2} \in\cC(V,\R)$ des fonctions continues $\log$-flottantes. Notons $\pr \colon V \to \cM(\Z)$ la projection canonique et supposons qu'elle est presque surjective. Supposons qu'il existe $s_{0} \in  \R_{>0}$ 
tel que 
\begin{enumerate}[i)]
\item l'application $\pr_{\vert \{f_{1}=s_{0}\}}$ est propre ;
\item on a $\cE \ge f_{2}$ sur $\{f_{1}=s_{0}\} \cap \pr^{-1}(\va_{0})$.
\end{enumerate}
Alors, pour tout $\alpha\in \R_{>0}$,  il existe $t_{\alpha}\in  \R_{>0}$ et un ensemble fini~$P_{\alpha}$ de nombres premiers tels que
\[\forall p \notin P_{\alpha} \cup \{\infty\},\ \cE \ge\, -\frac{\alpha}{s_{0}}\, f_{1}+f_{2} \textrm{ sur } \{f_{1}>0\} \cap \pr^{-1}(\va_{p})\]
et 
\[\forall p \in P_{\alpha} \cup \{\infty\},\ \cE \ge\, -\frac{\alpha}{s_{0}}\, f_{1}+f_{2} \textrm{ sur } \{f_{1}>t_{\alpha}\} \cap \pr^{-1}(\va_{p}).\]
\end{lemmintro}

En sommant sur les places, on obtient une minoration globale par une fonction de type hauteur (\cf~th\'eor\`eme~\ref{thm:minoration} et corollaire~\ref{cor:minorationF1}). Dans notre cas, en partant de la minoration centrale obtenue \`a l'\'etape~2, on parvient au r\'esultat suivant.

\begin{theointro}[\textmd{\emph{infra} th\'eor\`eme~\ref{th:hEhab}}]\label{th:minorationLatteshauteur}
Il existe $C\in \R_{>0}$ et $D\in \R$ telles que, pour tous morphismes de Latt\`es $L_{a}\ne L_{b}$ sur~$\Qbar$ comme pr\'ec\'edemment, on ait 
\[ \la L_{a},L_{b} \ra \ge C \, h([a\mathbin: b]) + D,\]
o\`u $h([a\colon b])$ d\'esigne la hauteur logarithmique du point de l'espace projectif de coordonn\'ees homog\`enes $[a_{1}\mathbin: a_{2}\mathbin: a_{3}\mathbin: b_{1}\mathbin: b_{2}\mathbin: b_{3}]$.\end{theointro}

Un raisonnement par l'absurde entra\^ine alors la minoration absolue du th\'eor\`eme~\ref{th:ecartLattes}. En effet, supposons l'existence d'une suite de points dont l'\'energie globale associ\'ee tend vers~0. Pour des indices assez grands, l'\'energie archim\'edienne de ces points est strictement positive, mais petite, donc ces points se rapprochent du bord de l'espace de modules. Cette propri\'et\'e peut se traduire en termes de la valeur absolue archim\'edienne des coordonn\'ees, par exemple par le fait qu'elles soit tr\`es petite (ou d'autres fa\c cons qui se traitent de m\^eme). La formule du produit impose alors aux valeurs absolues ultram\'etriques d'\^etre tr\`es grandes, donc \`a la hauteur d'\^etre tr\`es grande et l'on aboutit \`a une contradiction.

\medbreak

Afin d'insister sur le caract\`ere tr\`es g\'en\'eral et ais\'ement reproductible de la strat\'egie pr\'esent\'ee ici, et notamment des \'etapes~1 et~3, nous indiquons \'egalement comment obtenir un analogue du th\'eor\`eme~\ref{th:minorationLatteshauteur} pour des syst\`emes dynamiques de la forme $P_{c} \colon z \mapsto z^2+c$ avec $c\in \Qbar$. Ce r\'esultat, avec des constantes explicites, a \'et\'e \'etablie par L.~DeMarco, H.~Krieger et H.~Ye dans~\cite{DKY2}. Nous nous appuyons sur leurs calculs pour obtenir la minoration centrale n\'ecessaire \`a notre \'etape~2. Le r\'esultat s'obtient alors en quelques lignes, gr\^ace au lemme~\ref{lem:extensionintro}.

\begin{theointro}[\textmd{\emph{infra} remarque~\ref{rem:PcPd}}]\label{th:minorationpolquad}
Il existe $C'\in \R_{>0}$ et $D'\in \R$ telles que, pour tous $c \ne d \in \Qbar$, on ait 
\[ \la P_{c},P_{d} \ra \ge C' \, h(c,d) + D',\]
o\`u $h(c,d)$ d\'esigne la hauteur logarithmique usuelle sur $\Qbar^2$.
\end{theointro}

Une minoration du type de celle du th\'eor\`eme~\ref{th:ecartLattes} s'en d\'eduit sans peine par un argument similaire \`a celui esquiss\'e plus haut dans le cas des morphismes de Latt\`es. Nous renvoyons le lecteur int\'eress\'e \`a~\cite{DKY2} pour des cons\'equences en termes de majoration uniforme de nombre de points pr\'ep\'eriodiques communs, dans la lign\'ee du th\'eor\`eme~\ref{th:BFT}. 

\subsection{Application \`a la conjecture de Bogomolov-Fu-Tschinkel}

Nous avons d\'ej\`a indiqu\'e plus haut que l'\'energie mutuelle se comportait essentiellement comme le carr\'e d'une distance. En appliquant ce principe \`a partir de deux morphismes de Latt\`es $L_{a}$ et $L_{b}$ sur un corps de nombres~$K$ et d'un ensemble fini $F$ de points de~$\bar K$ stable par $\Gal(\bar K/K)$, on obtient la suite d'in\'egalit\'es
\begin{align*}
\delta_{0}^{1/2} & \le \la L_{a},L_{b}\ra^{1/2}\\
& \le \la L_{a},[F]\ra^{1/2} + \la L_{b},[F]\ra^{1/2} + \textrm{ terme d'erreur}\\
& \le h_{L_{a}}(F)^{1/2} + h_{L_{b}}(F)^{1/2} + \textrm{ terme d'erreur}.
\end{align*}
Dans le cadre de la conjecture de Bogomolov-Fu-Tschinkel, o\`u~$F$ est compos\'e de points pr\'ep\'eriodiques \`a la fois pour~$L_{a}$ et~$L_{b}$, les hauteurs dynamiques sont nulles et seul subsiste le terme d'erreur. C'est donc lui qu'il faut estimer.

On peut y parvenir en utilisant, de nouveau, une strat\'egie de propagation du central au global g\'en\'eralisant celle mise en \oe uvre dans \cite[section~7]{DKY}, strat\'egie que nous mettions en \oe uvre dans une premi\`ere version de ce texte. Dans le temps pr\'ec\'edant la publication, Thomas Gauthier s'est int\'eress\'e \`a ces estim\'ees et en a propos\'e une version plus g\'en\'erale, valable pour des familles arbitraires de syst\`emes dynamiques sur~$\P^1$, en s'appuyant sur des techniques de potentiels H\"older (\cf~\cite[Theorem~B]{GauthierHoelderEstimates}). Dans notre contexte, elle s'\'enonce ainsi.

\begin{theointro}[\textmd{\emph{infra} th\'eor\`eme~\ref{th:CsurS}}]\label{th:LaLbhab}
Il existe $C_{1} \in \R_{>0}$ tel que, pour tout $\delta \in \intoo{0,1}$, on ait
\[\la L_{a},L_{b}\ra \le h_{L_{a}}(F) + h_{L_{b}}(F) + C_{1}\, \Big(\delta - \frac{\log(\delta)}{\sharp F}\Big)\,  (h([a\mathbin:b])+1).\]
\end{theointro}

Pour un choix de~$\delta$ ad\'equat, on peut combiner cette in\'egalit\'e avec celle des th\'eor\`emes~\ref{th:ecartLattes} (pour les points de petite hauteur) et~\ref{th:minorationLatteshauteur} (pour les points de grande hauteur) pour obtenir la majoration absolue de~$\sharp F$ d\'esir\'ee.

\subsection{Autres approches}

Soulignons de nouveau que notre strat\'egie suit fid\`element celle propos\'ee par L.~DeMarco, H.~Krieger et H.~Ye dans~\cite{DKY}, pour traiter le cas de courbes elliptiques~$E_{a}$ et~$E_{b}$ sous forme de Legendre. En particulier, l'id\'ee de passer par la succession de th\'eor\`emes~\ref{th:ecartLattes}, \ref{th:minorationLatteshauteur}, \ref{th:LaLbhab} pour aboutir \`a la conjecture de Bogomolov-Fu-Tschinkel leur est due.

Indiquons plus pr\'ecis\'ement les diff\'erences entre notre travail et~\cite{DKY}. La principale concerne la dimension de l'espace de modules consid\'er\'e. Le fait d'imposer aux courbes~$E_{a}$ et~$E_{b}$ d'\^etre sous forme de Legendre permet de fixer trois des param\`etres pour chaque courbe (en $0,1,\infty$ par convention) et laisse donc seulement deux variables libres (au lieu de 5 dans le cas g\'en\'eral). En outre, dans une grande partie de~\cite{DKY}, c'est le quotient des param\`etres qui intervient, et l'\'etude porte alors sur un espace de dimension~1, cadre dans lequel existent de nombreuses techniques sp\'ecifiques. \`A titre d'exemple, on peut citer les d\'eg\'en\'erescences de mesures, \'etudi\'ees par Ch.~Favre sur un disque hybride dans~\cite{FavreEndomorphisms}, et que nous avons g\'en\'eralis\'es \`a une base arbitraire dans~\cite{DynamiqueI}. Elles sont au c\oe ur de l'\'etape~1.

Des diff\'erences interviennent \'egalement dans l'\'etape~2, o\`u la combinatoire des segments~$I_{a}$ et~$I_{b}$ se complique de fa\c con sensible. En effet, dans~\cite{DKY}, le fait d'imposer trois coordonn\'ees communes force les deux segments \`a \^etre soit align\'es, soit inclus l'un dans l'autre. Par cons\'equent, seul le second cas du th\'eor\`eme~\ref{th:La0Lb0} intervient et les trois derniers termes de la formule disparaissent.

Dans l'\'etape~3, l'approche est plus radicalement diff\'erente. Dans~\cite{DKY}, les \'energies mutuelles sont calcul\'ees explicitement en valuation triviale et \`a chaque place ultram\'etrique, en traitant \`a part le cas de la valuation 2-adique. Du c\^ot\'e archim\'edien, l'\'energie mutuelle est estim\'ee \`a partir de celle en valuation triviale par des techniques de d\'eg\'en\'erescence bas\'ees sur~\cite{FavreEndomorphisms}. Dans ce texte, nous effectuons un unique calcul en valuation triviale et en tirons directement des informations en toute place en nous appuyant sur~\cite{DynamiqueI} (que l'on peut voir comme un analogue global de \cite{FavreEndomorphisms}).

\medbreak

Indiquons finalement d'autres approches, plus indirectes, au th\'eor\`eme~\ref{th:BFT}. Elles nous ont \'et\'e signal\'ees par L.~DeMarco, que nous remercions ici. Les techniques employ\'ees y sont de nature fondamentalement diff\'erente des n\^otres, alg\'ebriques et arithm\'etiques plut\^ot qu'analytiques et dynamiques, m\^eme si les champs d'applications se recoupent parfois, comme ici. .

Dans leur article~\cite{BogomolovTschinkelSmallFields}, F.~Bogomolov et Yu.~Tschinkel d\'emontrent que l'intersection $\pi_{a}(E_{a}[\infty]) \cap \pi_{b}(E_{b}[\infty])$ est finie \textit{via} la conjecture de Manin-Mumford (d\'emontr\'ee par M.~Raynaud dans~\cite{RaynaudManinMumford}) appliqu\'ee \`a la courbe de~$E_{a}\times E_{b}$ obtenue comme image r\'eciproque de la diagonale de~$\P^1\times\P^1$ par $\pi_{a}\times\pi_{b}$. En effectuant cette construction de fa\c con relative sur l'espace de modules~$\cY$, on obtient une courbe relative~$\cC$ dans un espace fibr\'e en produits de courbes elliptiques. Apr\`es quelques v\'erifications d'apparence routini\`ere, la version de la conjecture de Bogomolov relative d\'emontr\'ee par L.~K\"uhne dans~\cite{KuehneRBC} devrait permettre de conclure que l'ensemble des points de torsion contenus dans~$\cC$ n'est pas Zariski-dense, et d'en d\'eduire le th\'eor\`eme~\ref{th:BFT}. De fa\c con alternative, on peut appliquer les r\'esultats r\'ecemment obtenus par Z.~Gao, T.~Ge et L.~K\"uhne dans~\cite{GaoGeKuehne} sur une version uniforme de la conjecture de Mordell-Lang. 

Notons que l'on peut inverser le raisonnement de cette derni\`ere approche pour d\'emontrer, \`a partir du th\'eor\`eme~\ref{th:BFT}, une version uniforme de la conjecture de Manin-Mumford pour certaines familles de courbes. Cette strat\'egie a \'et\'e men\'ee \`a bien dans~\cite[section~9]{DKY} en genre~2, \`a partir de la conjecture de Bogomolov-Fu-Tschinkel pour les paires de courbes elliptiques sous forme de Legendre. En utilisant le cas g\'en\'eral de la conjecture, il est vraisemblable que l'on puisse \'egalement obtenir des r\'esultats en genre~3, 4 ou~5. L'article~\cite{GaoGeKuehne} traitant d\'ej\`a ces questions de fa\c con plus g\'en\'erale, nous n'avons pas cherch\'e \`a poursuivre dans cette direction.

Finalement, mentionnons le texte \cite{DeMarcoMavrakiDynamicsonP1} de L.~DeMarco et M.~Mavraki, post\'erieur \`a celui-ci, qui propose une nouvelle d\'emonstration du th\'eor\`eme~\ref{th:BFT} (mais pas du th\'eor\`eme~\ref{th:ecartLattes}), par des techniques de dynamique complexe, ainsi que des g\'en\'eralisations \`a d'autres classes de morphismes suffisamment g\'en\'eriques.

\subsection{Organisation du texte}

Une introduction rapide \`a la th\'eorie des espaces de Berkovich sur les anneaux de Banach figure dans le texte~\cite{DynamiqueI} qui pr\'ec\`ede celui-ci et nous y renvoyons le lecteur int\'eress\'e. Nous nous contentons ici de rappeler, dans la section~\ref{sec:BerkovichZ}, la description explicite du spectre analytique~$\cM(\Z)$, l'espace de base sur lequel s'effectuent nos constructions. 

La section~\ref{sec:energiemutuelle} commence par quelques rappels sur la th\'eorie de l'\'energie mutuelle de Ch.~Favre et J.~Rivera--Letelier. Elle contient \'egalement des calculs explicites de cette \'energie, menant au th\'eor\`eme~\ref{th:La0Lb0} de l'\'etape~2. 

La section~\ref{sec:Lattes} est consacr\'ee aux morphismes de Latt\`es associ\'ees \`a la multiplication par~2. Nous y construisons l'espace de modules des paires de tels morphismes et d\'emontrons le th\'eor\`eme~\ref{th:continuintro}, sur la  continuit\'e des mesures, qui fait l'objet de l'\'etape~1.

L'\'etape~3 s'effectue en deux temps. Dans la section~\ref{sec:fibrecentralehauteurs}, nous d\'emontrons le lemme~\ref{lem:extensionintro} permettant de passer d'une estimation centrale \`a une estimation en toute place. Nous avons cherch\'e \`a r\'ediger cette partie de fa\c con aussi g\'en\'erale que possible, afin de la rendre applicable dans d'autres contextes. Dans la section~\ref{sec:minoration}, nous utilisons ces r\'esultats dans le cadre qui nous int\'eresse pour d\'emontrer les th\'eor\`emes~\ref{th:minorationLatteshauteur} puis~\ref{th:ecartLattes}. 

La section~\ref{sec:courbeselliptiques} finale est consacr\'ee \`a la d\'emonstration du th\'eor\`eme~\ref{th:BFT}, r\'epondant \`a la conjecture de Bogomolov-Fu-Tschinkel.

\subsection{Remerciements}

L'auteur remercie chaleureusement Velibor Bojkovi\'c pour de nombreux \'echanges fructueux, qui ont notamment permis de d\'etecter une erreur majeure dans une version pr\'eliminaire de ce texte. Il remercie \'egalement Marco Maculan pour sa relecture minutieuse et ses remarques avis\'ees, ainsi que Thomas Gauthier pour lui avoir transmis une version pr\'eliminaire de~\cite{GauthierHoelderEstimates}. Merci \'egalement au rapporteur ou \`a la rapporteuse pour ses commentaires pertinents.

\subsection{Notations}\label{sec:notations}

Nous regroupons ici quelques notations qui seront utilis\'ees tout au long du texte.

\smallbreak

\noindent $\bullet$ On note $\cP$ l'ensemble des nombres premiers. Pour $p\in \cP$, on note~$\va_{p}$ la valeur absolue $p$-adique sur~$\Q$ usuelle, normalis\'ee par la condition $\abs{p}_{p} = \frac1p$. On note~$\va_{\infty}$ la valeur absolue usuelle sur~$\Q$.

Soit $K$ un corps de nombres. On note~$M_{K}$ l'ensemble des places de~$K$. Soit $v\in M_{K}$. On note $v_{\vert\Q}$ l'unique place de~$\Q$ telle que $v \mid v_{\vert\Q}$ et $\va_{v}$ l'unique valeur absolue sur~$K$ associ\'ee \`a la place~$v$ qui \'etend~$\va_{v_{\vert\Q}}$. On pose
\[N_{v} := \frac{[K_{v} \mathbin: \Q_{v_{\vert\Q}}]}{[K \mathbin: \Q]}.\]

Soient $(x_{0},\dotsc,x_{n}) \in \Qbar^{n+1} \setminus \{(0,\dotsc,0)\}$. On d\'efinit la \emph{hauteur projective} de $(x_{0},\dotsc,x_{n})$ par
\[h([x_{0}\mathbin:\dotsb\mathbin:x_{n}]) := \sum_{v\in M_{K}} N_{v}\, \max_{0\le i\le n} (\log(\abs{x_{i}}_{v})),\]
o\`u~$K$ est un corps de nombres contenant $x_{0},\dotsc,x_{n}$. 
Cette quantit\'e est ind\'ependante du choix de~$K$.

Soit $(x_{1},\dotsc,x_{n}) \in \Qbar^n$. On d\'efinit la \emph{hauteur} de $(x_{1},\dotsc,x_{n})$ par
\[h(x_{1},\dotsc,x_{n}) := h([1 \mathbin: x_{1}\mathbin:\dotsb\mathbin:x_{n}]).\]
En posant
\[\fonction{\log^+}{\R}{\R_{\ge 0}}{x}{\max(\log(x),0)},\]
on a donc
\[h(x_{1}\mathbin:\dotsb\mathbin:x_{n}) := \sum_{v\in M_{K}} N_{v}\, \log^+(\max_{1\le i\le n} (\abs{x_{i}}_{v})).\]

Soit $K$ un corps de nombres. Pour toute place~$v$ de~$K$ et tout point $P$ (resp. toute partie~$F$) de~$K$, on note~$P_{v}$ (resp. $F_{v}$) son image dans~$K_{v}$.

\medbreak

\noindent $\bullet$ Pour toute famille finie $(t_{i})_{i\in I}$ de nombres r\'eels, on d\'efinit le \emph{sous-maximum} de $(t_{i})_{i\in I}$ par
\[\smax_{i\in I} (t_{i}) = \min_{j\in I} \big(\max_{i\ne j}(t_{i})\big).\] 
Pour $t_{1}\le \dotsb \le t_{n-1}\le t_{n} \in \R$, on a donc
\[\smax_{1\le i\le n}(t_{i}) = t_{n-1}.\]

\noindent $\bullet$ Soit $k$ un corps valu\'e complet. On note~$\E{1}{k}$ (resp.~$\EP{1}{k}$) la droite affine (resp. projective) analytique au sens de Berkovich. 

Supposons que $k$ est ultram\'etrique. Pour $\alpha\in k$ et $r\in \R_{>0}$, on note~$\eta_{\alpha,r}$ l'unique point du bord de Shilov du disque ferm\'e de centre~$\alpha$ et de rayon~$r$ et $\chi_{\alpha,r}$ la mesure de Dirac support\'ee en~$\eta_{\alpha,r}$. Tout segment~$I$ de~$\EP{1}{k}$ form\'e de points de type~2 ou~3 poss\`ede une longueur (logarithmique) canonique, que nous noterons~$\ell(I)$. Pour $\alpha\in k$ et $r,s\in \R_{>0}$ avec $r\le s$, on a
\[ \ell(\intff{\eta_{\alpha,r},\eta_{\alpha,s}}) = \log\big(\frac s r\big).\]

Supposons que $k = \C$. Alors la droite $\E{1}{k}$ (resp.~$\EP{1}{k}$) est isomorphe \`a~$\C$ (resp.~$\P^1(\C)$). Pour $\alpha\in \C$ et $r\in \R_{>0}$, on note~$\chi_{\alpha,r}$ la mesure de Haar de masse totale~1 sur le cercle $\overline{C}(a,r)$. 

Supposons que $k = \R$. Alors la droite $\E{1}{k}$ (resp.~$\EP{1}{k}$) est isomorphe au quotient de~$\C$ (resp.~$\P^1(\C)$) par la conjugaison complexe. Pour $\alpha\in \R$ et $r\in \R_{>0}$, on note~$\chi_{\alpha,r}$ l'image de la mesure~$\chi_{\alpha,r}$ sur~$\C$. 

Nous renvoyons \`a \cite[sections~2.2 et~6.1]{DynamiqueI} pour des pr\'ecisions sur ces espaces et ces mesures respectivement.

\section{Espaces de Berkovich sur les entiers}\label{sec:BerkovichZ}

Dans~\cite{rouge}, V.~Berkovich d\'efinit une notion d'espace analytique sur un anneau de Banach~$\cA$ (\cf~\'egalement \cite[section~2]{DynamiqueI} pour quelques rappels). Nous nous int\'eressons ici au cas particulier o\`u cet anneau de Banach est l'anneau des entiers relatifs~$\Z$ ou, plus g\'en\'eralement, un anneau d'entiers de corps de nombres, ou encore un localis\'e d'un de ces anneaux.

\subsection{Le cas de~$\Z$}

Consid\'erons l'anneau des entiers relatifs~$\Z$ muni de la valeur absolue usuelle~$\va_{\infty}$. Il s'agit d'un anneau de Banach et l'on peut donc consid\'erer son spectre analytique~$\cM(\Z)$ au sens de V.~Berkovich. Rappelons que ce dernier est d\'efini, ensemblistement, comme l'ensemble des semi-normes multiplicatives sur~$\Z$. Le th\'eor\`eme d'Ostrowski permet de le d\'ecrire explicitement. Il contient~: 
\begin{itemize}
\item la valeur absolue triviale~$\va_{0}$ sur~$\Z$ ;
\item les valeurs absolues archim\'ediennes~$\va_{\infty}^\eps$ avec $\eps \in \intof{0,1}$ ;
\item pour tout nombre premier~$p$, les valeurs absolues $p$-adiques~\added{$\va_{p}^\eps$} \deleted{$\va_{\infty}^\eps$} avec $\eps \in \intoo{0,+\infty}$ ;
\item pour tout nombre premier~$p$, la semi-norme~$\va_{p}^{+\infty}$ d\'efinie comme la composition de l'application quotient $\Z \to \Z/p\Z$ et de la valeur absolue triviale sur~$\Z/p\Z$. 
\end{itemize}
Nous noterons $a_{0}$ le point associ\'e \`a~$\va_{0}$, $a_{\infty}^\eps$ le point associ\'e \`a~$\va_{\infty}^\eps$, pour $\eps \in \intof{0,1}$, et $a_{p}^\eps$ le point associ\'e \`a~$\va_{p}^\eps$, pour $p\in \cP$ et $\eps \in \intof{0,+\infty}$. La figure~\ref{fig:MZ} contient une repr\'esentation d'un plongement (non canonique) de~$\cM(\Z)$ dans~$\R^2$ respectant la topologie.

\begin{figure}[!h]
\centering
\begin{tikzpicture}
\foreach \x [count=\xi] in {-2,-1,...,17}
\draw (0,0) -- ({10*cos(\x*pi/10 r)/\xi},{10*sin(\x*pi/10 r)/\xi}) ;
\foreach \x [count=\xi] in {-2,-1,...,17}
\fill ({10*cos(\x*pi/10 r)/\xi},{10*sin(\x*pi/10 r)/\xi}) circle ({0.07/(sqrt(\xi)}) ;

\draw ({10.5*cos(-pi/5 r)},{10.5*sin(-pi/5 r)}) node{$a_{\infty}$} ;
\fill ({5.5*cos(-pi/5 r)},{5.5*sin(-pi/5 r)}) circle (0.07) ;
\draw ({5.5*cos(-pi/5 r)},{5.5*sin(-pi/5 r)-.1}) node[below]{$a_{\infty}^\eps$} ;

\draw ({11*cos(-pi/10 r)/2+.1},{11*sin(-pi/10 r)/2}) node{$a_2^{+\infty}$} ;
\fill ({2.75*cos(-pi/10 r)},{2.75*sin(-pi/10 r)}) circle ({0.07/(sqrt(2)}) ;
\draw ({2.75*cos(-pi/10 r)},{2.75*sin(-pi/10 r)-.05}) node[below]{$a_2^\eps$} ;

\draw ({12*cos(pi/5 r)/5+.1},{12*sin(pi/5 r)/5+.1}) node{$a_p^{+\infty}$} ;
\end{tikzpicture}
\caption{Le spectre de Berkovich $\cM(\Z)$.}\label{fig:MZ}
\end{figure}

Notons 
\[ \cN(\Q) := \{a_{\infty},\ a_{p},\ p\in \cP\} \subset \cM(\Z)\]
l'ensemble des points de~$\cM(\Z)$ correspondant aux valeurs absolues normalis\'ees.

\medbreak

Il existe une notion d'espace de Berkovich sur~$\Z$. Un tel espace~$X$ poss\`ede un morphisme structural vers le spectre~$\cM(\Z)$, que nous noterons $\pr \colon X \to \cM(\Z)$ dans ce texte. Rappelons qu'\`a tout sch\'ema~$\cX$ localement de type fini sur~$\Z$, on peut associer son \emph{analytifi\'e} $\cX^\an$, qui est un espace de Berkovich sur~$\Z$ (\cf~\cite[section~4.1]{CTC}). 

L'anneau~$\Z$ est un \emph{bon anneau de Banach} au sens de~\cite[d\'efinition~2.3]{DynamiqueI}. Par cons\'equent, les r\'esultats de~\cite{DynamiqueI} s'appliquent. 

L'anneau~$\Z$ est un anneau \emph{flottant}, au sens de~\cite[d\'efinition~2.11]{DynamiqueI}. Cela implique que, pour tout $\eps \in \intof{0,1}$, on dispose d'une application
\[\varphi_{\eps} \colon x \in \cM(\Z) \mapstoo x^\eps \in \cM(\Z),\]
qui consiste \`a envoyer une seminorme sur sa puissance~$\eps$. Cette application est encore d\'efinie pour tout espace affine analytique sur~$\Z$, et m\^eme, par recollement, pour tout analytifi\'e d'un sch\'ema localement de type fini sur~$\Z$.

\subsection{Le cas d'un localis\'e de~$\Z$}\label{sec:Z1N}

Soit $N \in \Z\setminus\{0\}$. Nous souhaiterions disposer d'une th\'eorie semblable \`a la th\'eorie sur~$\Z$ pour l'anneau localis\'e~$\Z[\frac1N]$. 

Posons
\begin{align*} 
\cU_{N}(\Z) &:= \{ x \in \cM(\Z) : N(x) \ne 0\}\\
&= \cM(\Z) \setminus \bigcup_{p\in \cP_{\mid N}}  \{a_{p}^{+\infty}\},
\end{align*}
o\`u $\cP_{\mid N}$ d\'esigne l'ensemble des nombres premiers divisant~$N$. C'est un ouvert flottant de~$\cM(\Z)$, naturellement associ\'e \`a~$\Z[\frac1N]$. 

Pour pouvoir d\'evelopper la th\'eorie comme dans le cas de~$\Z$, nous allons approcher~$\cU_{N}(\Z)$ par une famille de spectres d'anneaux de Banach. Soit $\alpha \in \R_{>0}$. Munissons $\Z[\frac1N]$ de la norme 
\[ \nm_{N,\alpha} := \max(\va_{\infty},\ \va_{p}^\alpha,\ p\in \cP_{\mid N}\}.\] 
Nous obtenons un anneau de Banach dont le spectre s'identifie \`a la partie compacte de~$\cM(\Z)$ obtenue en coupant, pour tout $p\in \cP_{\mid N}$, la branche $p$-adique \`a hauteur de~$\va_{p}^\alpha$~:
\[\cM\Big(\Z\Big[\frac1N\Big],\nm_{N,\alpha}\Big) = \intff{a_{0},a_{\infty}} \cup \bigcup_{p\notin \cP_{\mid N}}  \intff{a_{0},a_{p}^{+\infty}} \cup \bigcup_{p\in \cP_{\mid N}}  \intff{a_{0},a_{p}^\alpha}.\]
Notons $\cU_{N,\alpha}$ l'int\'erieur de~$\cM(\Z[\frac1N],\nm_{N,\alpha})$ dans~$\cM(\Z)$~:
\[\cU_{N,\alpha} = \intff{a_{0},a_{\infty}} \cup \bigcup_{p\notin \cP_{\mid N}}  \intff{a_{0},a_{p}^{+\infty}} \cup \bigcup_{p\in \cP_{\mid N}}  \intfo{a_{0},a_{p}^\alpha}.\]

Pour tous $\alpha,\beta \in \R_{>0}$ avec $\alpha \le \beta$, on a une immersion ouverte $\iota_{\beta,\alpha} \colon \cU_{N,\alpha} \hookrightarrow \cU_{N,\beta}$. On en d\'eduit un isomorphisme naturel
\[ \colim_{\alpha>0}\, \cU_{N,\alpha} \simtoo \cU_{N}.\] 

Nous pouvons maintenant d\'efinir l'analytifi\'e d'un sch\'ema~$X$ localement de type fini sur~$\Z[\frac1N]$ en suivant cette approche. Pour tout $\alpha\in \R_{>0}$, notons $X^\an_{\alpha}$ l'analytifi\'e de~$X$ sur $(\Z[\frac1N],\nm_{N,\alpha})$ et $X^\an_{\alpha-}$ sa restriction au-dessus de l'ouvert~$\cU_{N,\alpha}$.  Pour tous $\alpha,\beta \in \R_{>0}$ avec $\alpha \le \beta$, l'immersion ouverte $\iota_{\beta,\alpha}$ induit une immersion ouverte $X^\an_{\alpha-} \hookrightarrow X^\an_{\beta-}$. On d\'efinit alors l'analytifi\'e de~$X$ par 
\[X^\an :=  \colim_{\alpha>0}\, X^\an_{\alpha-}.\] 
L'espace~$X^\an$ est muni d'un morphisme naturel~$\pr$ vers~$\cU_{N}$. En particulier, c'est un espace $\Z$-analytique. On v\'erifie qu'il est flottant.

\subsection{Le cas d'un anneau d'entiers de corps de nombres}

Soit~$K$ un corps de nombres. Notons~$A$ son anneau d'entiers et munissons-le de la norme
\[\nm_{A} := \max_{\sigma \in M_{K,\infty}} (\va_{\sigma}),\]
o\`u~$M_{K,\infty}$ d\'esigne l'ensemble des places infinies de~$K$. 
C'est un anneau de Banach dont le spectre poss\`ede une description tr\`es similaire \`a celle de~$\cM(\Z)$. Nous renvoyons \`a \cite[section~3.1]{A1Z} pour plus de d\'etails.

Posons 
\[ \cN(K) := \{a_{\sigma},\ \sigma\in M_{K}\} \subset \cM(A).\]

Comme dans le cas de~$\Z$, l'anneau~$A$ est un bon anneau de Banach flottant et l'on peut d\'efinir l'analytifi\'e~$X^\an$ d'un sch\'ema localement de type fini sur~$A$.

Les arguments de la section~\ref{sec:Z1N} s'adaptent \'egalement \`a ce cadre. Soit $d\in A\setminus\{0\}$. Posons
\[\cU_{d}(A) := \{ x \in \cM(A) : d(x) \ne 0\}.\]
On peut analytifier un sch\'ema~$X$ localement de type fini sur~$A[\frac1d]$ de fa\c con \`a obtenir un espace $A$-analytique~$X^\an$ flottant muni d'un morphisme vers~$\cU_{d}(A)$.

\section{\'Energie mutuelle de deux mesures}\label{sec:energiemutuelle}

Dans cette section, nous rappelons la notion d'\'energie mutuelle pour deux mesures de Radon sign\'ees, telle que d\'efinie par Charles Favre et Juan Rivera--Letelier dans~\cite{FRLEquidistribution}. Nous la calculons explicitement dans le cas de deux mesures support\'ees sur un segment dans la droite projective au-dessus d'un corps ultram\'etrique. 

\subsection{D\'efinitions}\label{sec:mesuresdef}

\subsubsection{Cadre local}\label{sec:energielocale}

Commen\c cons par rappeler la d\'efinition d'\'energie mutuelle dans le cas classique de~$\P^1(\C)$,  \cf~\cite[section~2.4]{FRLEquidistribution}. Notons $\textrm{Diag}$ la diagonale de~$\C^2$.

\begin{defi}\label{def:energieC}
Soient $\rho$ et~$\rho'$ deux mesures sur~$\P^1(\C)$. Supposons que la fonction 
\[ (z,w) \in \C^2 \setminus \textrm{Diag} \mapstoo \log(\abs{z-w}) \in \R\] 
est int\'egrable pour $\abs{\rho}\otimes \abs{\rho'}$, o\`u $\abs{\rho}$ et~$\abs{\rho'}$ d\'esignent les mesures traces de~$\rho$ et~$\rho'$. On d\'efinit alors l'\emph{\'energie mutuelle} de~$\rho$ et~$\rho'$ par
\[(\rho,\rho') := - \int_{\C^2 \setminus \textrm{Diag}} \log(\abs{z-w}) \diff \rho(z) \otimes \diff \rho'(w).\]
\end{defi}
Il suit directement de la d\'efinition que l'on a $(\rho',\rho) = (\rho,\rho')$.

\medbreak

Soit $\rho$ une mesure sur~$\P^1(\C)$ dont la trace est \emph{\`a potentiel continu} (c'est-\`a-dire qu'elle peut s'\'ecrire sous la forme $\Delta(u)$, avec $u$ continue, au voisinage de tout point de~$\P^1(\C)$). Soit $\rho'$ une mesure sur~$\P^1(\C)$ satisfaisant l'une des propri\'et\'es suivantes~:
\begin{enumerate}[i)]
\item $\rho'$ est une mesure \`a support fini ne chargeant pas l'infini~;
\item $\abs{\rho'}$ est \`a potentiel continu.
\end{enumerate}
Alors l'hypoth\`ese d'int\'egrabilit\'e de la d\'efinition~\ref{def:energieC} est satisfaite (\cf~\cite[lemme~2.4]{FRLEquidistribution}). En outre, si $g_{\rho}$ est un potentiel de~$\rho$ (c'est-\`a-dire que $\Delta(g_{\rho})=\rho$) et que $\rho'(\P^1(\C))=0$, on a 
\[(\rho,\rho') = - \int_{\C} g_{\rho} \diff \rho'\]
(\cf~\cite[lemme~2.5]{FRLEquidistribution}). Remarquons que cette expression montre que la quantit\'e~$(\rho,\rho')$ est ind\'ependante du choix de la coordonn\'ee sur~$\C$. 

Si, en outre, on a $\rho(\P^1(\C))=0$, alors $(\rho,\rho)\ge 0$ et $(\rho,\rho) = 0$ si, et seulement si, $\rho=0$ (\cf~\cite[proposition~2.6]{FRLEquidistribution}). 

\medbreak

Dans le cadre des espaces de Berkovich sur~$\Z$ dans lequel nous travaillons, nous aurons besoin de disposer de la th\'eorie non seulement sur l'espace~$\P^1(\C)$ classique, mais \'egalement sur $\EP{1}{k}$ pour tout corps valu\'e archim\'edien complet. Ce dernier cas n'est plus g\'en\'eral qu'en apparence et la th\'eorie se ram\`ene imm\'ediatement \`a celle sur le corps~$\C$ muni de la valeur absolue usuelle en utilisant les d\'efinitions et m\'ethodes de~\cite[section~5.2]{DynamiqueI}.

\medbreak

Ch.~Favre et J.~Rivera--Letelier d\'eveloppent \'egalement la th\'eorie pour $\EP{1}{k}$, o\`u~$k$ est un corps valu\'e ultram\'etrique complet (\cite[section~4.4 et~4.5]{FRLEquidistribution}). Par rapport au cas complexe, la seule modification \`a apporter  concerne la fonction $\log(\abs{z-w})$, qu'ils remplacent par un analogue purement ultram\'etrique permettant de l'\'etendre aux points non rationnels de~$\EP{1}{k}$. Tous les \'enonc\'es ci-dessus restent valablent dans le cadre ultram\'etrique. 

Dans un souci de pr\'ecision, indiquons que Ch.~Favre et J.~Rivera--Letelier ne d\'eveloppent la th\'eorie que dans le cas de~$\C_{p}$. Les m\^emes arguments permettent de traiter, sans changement, le cas d'un corps valu\'e ultram\'etrique complet alg\'ebriquement clos. Le cas g\'en\'eral s'y ram\`ene par changement de base, toutes les formules \'etant invariantes par cette op\'eration.

\medbreak

Introduisons encore une notation. Dans tous les cas, lorsque les conditions assurant l'existence sont satisfaites, on pose
\[ \la \rho,\rho' \ra := \frac12 \,(\rho-\rho',\rho-\rho').\]

\subsubsection{Cadre global}\label{sec:energieglobale}

Soit $K$ un corps de nombres. Ch.~Favre et J.~Rivera--Letelier introduisent une notion de mesure ad\'elique sur~$K$, \cf~\cite[d\'efinition~1.1]{FRLEquidistribution}. 

\begin{defi}\label{def:mesureadelique}
Une \emph{mesure ad\'elique} sur~$K$ est une famille de mesures 
\[\rho := (\rho_{v})_{v\in M_{K}}\]
o\`u, pour toute place $v\in M_{K}$, $\rho_{v}$ est une mesure de probabilit\'e sur~$\EP{1}{K_{v}}$, v\'erifiant les propri\'et\'es suivantes~:
\begin{enumerate}[i)]
\item pour toute place $v\in M_{K}$, $\rho_{v}$ poss\`ede un \emph{potentiel continu}~: il existe $g_{v} \colon \EP{1}{K_{v}} \to \R$ continue telle que $\rho_{v} = \chi_{0,1} + \Delta (g_{v})$~;
\item pour presque toute place $v\in M_{K}$, on a $\rho_{v} = \chi_{0,1}$.
\end{enumerate}
\end{defi}

\begin{rema}\label{rem:rhoL}
Dans la d\'efinition originale de Ch.~Favre et J.~Rivera--Letelier, pour toute $v\in M_{K}$, la mesure~$\rho_{v}$ est une mesure sur~$\EP{1}{\C_{v}}$, o\`u~$\C_{v}$ est le compl\'et\'e d'une cl\^oture alg\'ebrique de~$K_{v}$. Cette modification mineure n'a aucune influence sur la th\'eorie d\'evelopp\'ee dans~\cite{FRLEquidistribution}. Elle est importante en ce qu'elle nous permettra d'utiliser les r\'esultats de continuit\'e d\'emontr\'es dans~\cite{DynamiqueI}, les corps~$K_{v}$ apparaissant comme corps r\'esiduels compl\'et\'es dans le spectre de l'anneau des entiers de~$K$.

Rappelons \'egalement que nous pouvons tirer en arri\`ere les mesures par un morphisme fini (\cf~\cite[section~3.2]{DynamiqueI}). Ainsi, \`a partir d'une mesure sur~$\EP{1}{K_{v}}$, pouvons-nous en obtenir une sur~$\EP{1}{\ell}$, pour toute extension finie~$\ell$ de~$K_{v}$, \`a d\'efaut d'en obtenir une sur~$\EP{1}{\C_{v}}$. En particulier, pour toute extension finie~$L$ de~$K$, toute mesure ad\'elique~$\rho$ sur~$K$ induit une mesure ad\'elique~$\rho_{L}$ sur~$L$, par image r\'eciproque. 
\end{rema}

\begin{exem}\label{ex:mFr}
Soit $F$ une partie finie non vide de~$K$. Pour toute $v\in M_{K}$, notons~$F_{v}$ son image dans~$K_{v}$. Soit $r = (r_{v})_{v\in M_{K}}$ une famille de~$\R_{>0}$ dont presque tous les \'el\'ements valent~1. Alors, la famille
\[ m_{F,r} := \big( \frac1{\sharp F_{v}} \, \sum_{P\in F_{v}} \chi_{P,r_{v}}\big)_{v\in M_{K}}\]
est une mesure ad\'elique sur~$K$.

Rappelons, \cf~section~\ref{sec:notations}, que la notation $\chi_{P,r_{v}}$ d\'esigne la mesure de Haar sur le cercle $\overline{C}(P,r_{v})$, ou son pouss\'e en avant, dans le cas archim\'edien, et la mesure de Dirac support\'ee en l'unique point du bord de Shilov de ce cercle, dans le cas ultram\'etrique.
\end{exem}

\begin{exem}\label{ex:rhoR}
Soit $\varphi \in K(T)$ une fraction rationnelle de degr\'e sup\'erieur ou \'egal \`a~2. Pour toute $v\in M_{K}$, notons $\rho_{\varphi,v}$ la mesure d'\'equilibre sur $\EP{1}{K_{v}}$ associ\'ee \`a~$\varphi$. D'apr\`es \cite[th\'eor\`eme~8]{FRLEquidistribution}, la famille $\rho_{\varphi} := (\rho_{\varphi,v})_{v\in M_{K}}$ est une mesure ad\'elique sur~$K$.
\end{exem}

On peut encore d\'efinir l'\'energie mutuelle dans ce contexte. Soient $\rho := (\rho_{v})_{v\in M_{K}}$ et $\rho' := (\rho'_{v})_{v\in M_{K}}$ des mesures ad\'eliques sur~$K$. On pose 
\[ (\rho,\rho') := \sum_{v \in M_{K}} N_{v}\, (\rho_{v},\rho'_{v})\]
et 
\[ \la \rho,\rho'\ra := \frac12 \,(\rho-\rho',\rho-\rho').\]
Il d\'ecoule de la d\'efinition de mesure ad\'elique et des propri\'et\'es expos\'ees \`a la section~\ref{sec:energielocale} que ces quantit\'es font sens. 

\begin{rema}\label{rema:comparaisonenergie}
Soient $\varphi,\psi \in K(T)$ des fractions rationnelles de degr\'e sup\'erieur ou \'egal \`a~2. On peut alors d\'efinir l'\'energie mutuelle $\la \rho_{\varphi},\rho_{\psi}\ra$ comme ci-dessus.

D'autres d\'efinitions existent dans la litt\'erature. Mentionnons tout d'abord celle de C.~Petsche-L.~Szpiro-T.~Tucker (\cf~\cite{PetscheSzpiroTucker}). Elle est \'equivalente \`a la pr\'ec\'edente d'apr\`es~\cite[theorem~9]{Fili} et \cite[theorem~1]{PetscheSzpiroTucker}.

Une autre d\'efinition est due \`a S.-W.~Zhang (\cf~\cite{ZhangSmallPoints}), \`a la suite des travaux de S.~Arakelov et J.-B.~Bost-H.~Gillet-Chr.~Soul\'e. Dans ce cas, l'accouplement appara\^it comme produit d'intersection arithm\'etique des premi\`eres classes de Chern de deux fibr\'es m\'etris\'es sur~$\P^1_{K}$ associ\'es \`a~$\varphi$ et~$\psi$. Le r\'esultat obtenu est encore le m\^eme que pr\'ec\'edemment, comme expliqu\'e dans \cite[section~1.4]{PetscheSzpiroTucker}. 
\end{rema}

\'Enon\c cons maintenant une in\'egalit\'e triangulaire due \`a P.~Fili (\cf~\cite[theorem~1]{Fili}). \JP{Probablement pas utilis\'e au final, mais on peut le laisser quand m\^eme.}

\begin{theo}\label{th:inegalitetriangulaireenergie}
Soient $\rho_{1},\rho_{2},\rho_{3}$ des mesures ad\'eliques sur~$K$. Alors, on a 
\[\la \rho_{1},\rho_{2} \ra^{1/2} \le \la \rho_{1},\rho_{3} \ra^{1/2}  + \la \rho_{3},\rho_{2} \ra^{1/2} .\]
\end{theo}

Introduisons \'egalement des familles de mesures non ad\'eliques pour lesquelles nous pourrons encore calculer des \'en\'ergies mutuelles.

\begin{nota}\label{nota:[F]}
Soit $\bar K$ une cl\^oture alg\'ebrique de~$K$. Soit $F$ une partie finie non vide de~$\P^1(\bar K)$ stable par $\Gal(\bar K/K)$. Soit $v\in M_{K}$ et soit~$\C_{v}$ le compl\'et\'e d'une cl\^oture alg\'ebrique de~$K_{v}$. L'ensemble~$F$ d\'efinit un ensemble~$F_{\C_{v}}$ de points rationnels de~$\EP{1}{\C_{v}}$ (de m\^eme cardinal). Notons $[F]_{\C_{v}}$ la mesure de probabilit\'e sur~$\EP{1}{\C_{v}}$ \'equidistribu\'ee en ces points et~$[F]_{v}$ son image sur~$\EP{1}{K_{v}}$. Posons
\[ [F] := ([F]_{v})_{v\in M_{K}}.\]
\end{nota}

Soit $\rho := (\rho_{v})_{v\in M_{K}}$ une mesure ad\'elique. Soit~$F$ une partie finie non vide de~$\P^1(\bar K)$. On peut encore d\'efinir l'\'energie mutuelle dans ce cadre en posant
\[ h_{\rho}(F) := \frac12 \,\sum_{v \in M_{K}} N_{v}\, (\rho_{v} - [\Gal(\bar K/K) \cdot F]_{v}, \rho_{v} - [\Gal(\bar K/K) \cdot F]_{v}).\]
Il s'agit d'une g\'en\'eralisation de la hauteur classique, que l'on retrouve en choisissant pour~$\rho$ la famille \og constante \fg{} $(\chi_{0,1})_{v\in M_{K}}$. On peut \'egalement retrouver la hauteur canonique associ\'ee \`a une fraction rationnelle~$\varphi$, au sens G.~Call et J.~Silverman (\cf~\cite{CallSilverman}), en choisissant $\rho=\rho_{\varphi}$ (\cf~\cite[th\'eor\`eme~8]{FRLEquidistribution}).

\begin{rema}\label{rem:energieL}
Soit $L$ une extension finie de~$K$. Comme \`a la remarque~\ref{rem:rhoL}, en tirant en arri\`ere les mesures~$[F]_{v}$, on obtient une famille de mesures~$[F]_{L}$. Elle co\"incide avec la famille~$[F]$ calcul\'ee pour le corps~$L$.

En utilisant la compatibilit\'e du laplacien aux images r\'eciproques, on montre que l'\'energie mutuelle est invariante par extension des scalaires, au sens o\`u, pour toutes mesures ad\'eliques~$\rho$ et~$\rho'$, on a
\[ \la \rho,\rho'\ra =  \la \rho_{L},\rho_{L}'\ra\]
et
\[h_{\rho_{L}}(F) = h_{\rho}(F).\]
Par cons\'equent, dans la suite du texte, nous nous autoriserons parfois \`a \'ecrire des quantit\'es $\la \rho,\rho'\ra$ ou $h_{\rho}(F)$ pour des mesures et des ensembles d\'efinis sur~$\Qbar$, sans pr\'eciser de corps de nombres de d\'efinition.
\end{rema}

\subsection{Segment contre segment}\label{sec:segmentsegment}

Soit $k$ un corps valu\'e ultram\'etrique complet. Nous calculons ici des \'energies mutuelles de mesures associ\'ees \`a des segments contenus dans~$\EP{1}{k}$.

\begin{nota}
On note $\Seg_{k}$ l'ensemble des segments de~$\EP{1}{k}$ dont les extr\'emit\'es sont des points de type~2 ou~3. 
\end{nota}

\begin{nota}\label{nota:muI}
Soit $I\in \Seg_{k}$. On d\'efinit une mesure~$\mu_{I}$ sur~$\EP{1}{k}$ de la fa\c con suivante.
\begin{enumerate}[i)]
\item Si $I$ est un singleton~$\{\xi\}$, $\mu_{I}$ est la mesure de Dirac au point~$\xi$.

\item Si $I$ n'est pas un singleton, $\mu_{I}$ est l'image de la mesure de Lebesgue sur~$I$ de masse totale~1.
\end{enumerate}
\end{nota}

\begin{defi}
Soient $I_{1},I_{2} \in \Seg_{k}$. 

On dit que~$I_{1}$ et~$I_{2}$ sont \emph{align\'es} s'ils sont contenus dans un m\^eme segment.

On dit que~$I_{1}$ et~$I_{2}$ sont \emph{embo\^itables} si $I_{1}\cap I_{2}$ est un singleton et $I_{1} \cup I_{2}$ est un segment.
\end{defi}

Le r\'esultat suivant se d\'emontre par un calcul direct.

\begin{lemm}\label{lem:sigmaalphars}
Soient $\alpha\in k$ et $r,s \in \R_{>0}$ avec $r\le s$. Consid\'erons la fonction $\sigma_{\alpha,r,s}\colon \E{1}{k}\to \R$ d\'efinie, pour $z\in \E{1}{k}$, par 
\[\sigma_{\alpha,r,s}(z)  = \begin{cases}
\log(\frac sr)\log(\abs{z-\alpha})  & \textrm{si } \abs{z-\alpha}\ge s~;\\
\frac12\log(\abs{z-\alpha})^2 -\log(r)\log(\abs{z-\alpha}) + \frac12\log(s)^2& \textrm{si } r\le\abs{z-\alpha}\le s~;\\
 \frac12\log(s)^2-\frac12\log(r)^2 & \textrm{si } \abs{z-\alpha}\le r.
\end{cases}
\]
Alors, $\sigma_{\alpha,r,s}$ est sous-harmonique continue sur~$\E{1}{k}$ et on a
\[\Delta(\sigma_{\alpha,r,s}) = \log\big(\frac sr\big) \, \mu_{\intff{\eta_{\alpha,r},\eta_{\alpha,s}}}.\]
\qed
\end{lemm}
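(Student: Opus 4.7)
La continuité de $\sigma_{\alpha,r,s}$ s'obtient par vérification directe du raccord des trois expressions aux frontières $\abs{z-\alpha}=r$ et $\abs{z-\alpha}=s$~: on trouve la valeur commune $\frac12(\log(s)^2-\log(r)^2)$ au point $\eta_{\alpha,r}$ et $\log(s)^2-\log(r)\log(s)$ au point $\eta_{\alpha,s}$.

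Pour le calcul du laplacien, l'argument-clé est la représentation intégrale
\[
\sigma_{\alpha,r,s}(z) \;=\; \int_{\log r}^{\log s} \max(\log\abs{z-\alpha},\tau)\,d\tau,
\]
obtenue par discussion cas par cas selon la position de $\log\abs{z-\alpha}$ par rapport à $\log r$ et $\log s$, et qui redonne exactement les trois formules définissant~$\sigma_{\alpha,r,s}$. Pour chaque $\tau \in \R$, la fonction $g_\tau(z):=\max(\log\abs{z-\alpha},\tau)$ est un potentiel standard sur $\E{1}{k}$~: continue, sous-harmonique, harmonique hors du point $\eta_{\alpha,e^\tau}$ où la pente (sur le segment $\intff{\alpha,\infty}$, en direction de~$\infty$) saute de~$0$ à~$1$, ce qui donne $\Delta g_\tau = \chi_{\alpha,e^\tau}$. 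En échangeant le laplacien et l'intégrale par un argument de Fubini (appliqué à l'accouplement du laplacien distributionnel contre une fonction-test continue), on obtient
\[
\Delta\sigma_{\alpha,r,s} \;=\; \int_{\log r}^{\log s} \chi_{\alpha,e^\tau}\,d\tau.
\]
Or le paramétrage $\tau\mapsto\eta_{\alpha,e^\tau}$ de $\intff{\eta_{\alpha,r},\eta_{\alpha,s}}$ est précisément une isométrie pour la longueur logarithmique canonique~; l'intégrale ci-dessus s'identifie donc à la mesure de Lebesgue non normalisée sur ce segment, de masse totale $\log(s/r)$, soit $\log(s/r)\,\mu_{\intff{\eta_{\alpha,r},\eta_{\alpha,s}}}$. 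La sous-harmonicité en découle, le laplacien obtenu étant une mesure positive.

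Une variante plus directe consisterait à utiliser la structure d'arbre réel de $\EP{1}{k}$~: puisque $\sigma_{\alpha,r,s}$ ne dépend que de $\log\abs{z-\alpha}$, elle est constante sur les branches hors du segment $\intff{\alpha,\infty}$, et le laplacien est donc concentré sur ce segment. En écrivant $\sigma_{\alpha,r,s}=\phi(t)$ avec $t=\log\abs{z-\alpha}$, la formule résulte du calcul $\phi''(t)=\mathbf{1}_{\intff{\log r,\log s}}(t)$. Le point demandant le plus de vigilance dans les deux approches est de vérifier l'absence de contributions de Dirac aux extrémités $\eta_{\alpha,r}$ et $\eta_{\alpha,s}$, ce qui revient à contrôler que~$\phi$ est de classe~$\mathcal{C}^1$~: les dérivées à gauche et à droite coïncident bien en $\log r$ (toutes deux nulles) et en $\log s$ (toutes deux égales à $\log(s/r)$).
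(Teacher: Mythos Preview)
Your proposal is correct. The paper gives no proof beyond the phrase ``se d\'emontre par un calcul direct'' followed by \qed, so both of your approaches (the integral representation $\sigma_{\alpha,r,s}(z)=\int_{\log r}^{\log s}\max(\log\abs{z-\alpha},\tau)\,d\tau$ and the direct computation of $\phi''$ on the segment $\intff{\alpha,\infty}$) are legitimate ways to flesh out that calculation; the second variant is likely closest to what the author had in mind, while the first is an elegant reformulation that makes the linearity in the Lebesgue measure on $\intff{\eta_{\alpha,r},\eta_{\alpha,s}}$ transparent.
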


Lorsque $k$~est alg\'ebriquement clos, on peut, par un changement de variable, ramener tout segment $I \in \Seg_{k}$ \`a un segment de la forme consid\'er\'ee dans le lemme~\ref{lem:sigmaalphars}, et donc trouver un potentiel de~$\mu_{I}$ satisfaisant des propri\'et\'es analogues.

\begin{nota}
Supposons que $k$ est alg\'ebriquement clos. Soit $I \in \Seg_{k}$. On note $\sigma_{I} \colon \E{1}{k} \to \R$ l'unique fonction sous-harmonique continue telle que
\[ \Delta(\sigma_{I}) = \ell(I)\mu_{I}\]
et qui co\"incide avec $\ell(I) \log(\abs{z})$ au voisinage de l'infini.
\end{nota}

\begin{lemm}
Soient $I_{a},I_{b}\in \Seg_{k}$. Alors, la quantit\'e 
\[\la \mu_{I_{a}},\mu_{I_{b}} \ra = \frac12 (\mu_{I_{a}}-\mu_{I_{b}}, \mu_{I_{a}}-\mu_{I_{b}})\] 
est bien d\'efinie.

En outre, si $k$ est alg\'ebriquement clos, on a
\[\la \mu_{I_{a}},\mu_{I_{b}} \ra =  \frac12\int_{\E{1}{k}} \Big(\frac{1}{\ell(I_{a})}\sigma_{I_{a}}-\frac{1}{\ell(I_{b})}\sigma_{I_{b}}\Big) \diff (\mu_{I_{b}}-\mu_{I_{a}}).\]
\end{lemm}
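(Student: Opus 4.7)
\emph{Plan de la preuve.} La strat\'egie se d\'ecompose en deux temps. On v\'erifie d'abord la bonne d\'efinition de l'\'energie mutuelle \`a l'aide des crit\`eres rappel\'es \`a la section~\ref{sec:energielocale}, puis on \'etablit la formule explicite en appliquant la relation
\[ (\rho,\rho') = -\int g_{\rho}\,\diff\rho',\]
valable lorsque~$\rho$ admet un potentiel continu~$g_{\rho}$ et~$\rho'$ est de masse totale nulle, au choix $\rho = \rho' := \mu_{I_{a}}-\mu_{I_{b}}$. Dans les deux parties, on peut se ramener au cas o\`u~$k$ est alg\'ebriquement clos par changement de base, toutes les quantit\'es en jeu \'etant invariantes par cette op\'eration.

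Pour la bonne d\'efinition, la cl\'e est d'observer que, lorsque~$I$ n'est pas r\'eduit \`a un point, la fonction~$\sigma_{I}$ fournit un potentiel continu pour~$\mu_{I}$ (\emph{via} une variante du lemme~\ref{lem:sigmaalphars} obtenue par changement de variable), et que, lorsque~$I=\{\xi\}$ est un singleton, la mesure~$\mu_{I}$ est la masse de Dirac en~$\xi$, point de type~2 ou~3 ne chargeant pas l'infini. Selon la configuration du couple $(I_{a},I_{b})$, on tombe alors dans l'une des situations d'existence document\'ees \`a la section~\ref{sec:energielocale}, le cas de deux singletons se traitant \'el\'ementairement puisque l'int\'egrale d\'efinissant l'\'energie porte sur le compl\'ementaire de la diagonale.

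Pour la formule, supposons d'abord les deux segments non r\'eduits \`a un point. Par d\'efinition m\^eme de~$\sigma_{I_{a}}$ et~$\sigma_{I_{b}}$, la fonction
\[ g_{\rho} := \frac{1}{\ell(I_{a})}\sigma_{I_{a}}-\frac{1}{\ell(I_{b})}\sigma_{I_{b}}\]
est un potentiel continu de $\rho = \mu_{I_{a}}-\mu_{I_{b}}$, et cette derni\`ere est de masse totale nulle puisque~$\mu_{I_{a}}$ et~$\mu_{I_{b}}$ sont des mesures de probabilit\'e. La formule rappel\'ee fournit donc
\[(\rho,\rho) = -\int g_{\rho}\,\diff\rho = \int g_{\rho}\,\diff(\mu_{I_{b}}-\mu_{I_{a}}),\]
et il ne reste qu'\`a diviser par~$2$ pour obtenir l'expression annonc\'ee.

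Le point potentiellement d\'elicat sera l'extension de la formule au cas o\`u l'un des segments d\'eg\'en\`ere en un point, puisque~$1/\ell(I)$ n'a alors pas de sens direct. On peut contourner cette difficult\'e en adoptant la convention naturelle $\sigma_{I}/\ell(I) := \log\abs{z-\xi}$ lorsque~$I = \{\xi\}$, cette fonction restant un potentiel de~$\mu_{I}$, le calcul pr\'ec\'edent s'adaptant alors modulo une v\'erification d'int\'egrabilit\'e au voisinage de~$\xi$.
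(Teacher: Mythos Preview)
Your proposal is correct and follows essentially the same route as the paper's proof: reduce to the algebraically closed case by base change, observe that $\frac{1}{\ell(I_a)}\sigma_{I_a}-\frac{1}{\ell(I_b)}\sigma_{I_b}$ is a continuous potential for $\mu_{I_a}-\mu_{I_b}$, and invoke the general formula $(\rho,\rho')=-\int g_\rho\,\diff\rho'$ from the section on mutual energy. You are in fact more thorough than the paper, which dispatches the argument in one line and does not discuss the degenerate case where one segment is a singleton; your convention $\sigma_I/\ell(I):=\log\abs{z-\xi}$ (suitably interpreted in the ultrametric sense) is the natural one, and one can also note that the Dirac mass at a type~2 or~3 point already has a continuous potential, so no special integrability check is needed there.
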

\begin{proof}
Par changement de base, on se ram\`ene au cas o\`u $k$ est alg\'ebriquement clos. Le r\'esultat d\'ecoule alors du fait que $\mu_{I_{a}}-\mu_{I_{b}} = \Delta(\sigma_{I_{a}}-\sigma_{I_{b}})$ et des consid\'erations expos\'ees \`a la section~\ref{sec:mesuresdef}.
\end{proof}

\begin{nota}
Pour $I_{a},I_{b}\in \Seg_{k}$, on pose 
\[ E(I_{a},I_{b}) := \la \mu_{I_{a}},\mu_{I_{b}} \ra.\]
\end{nota}

Le reste de cette section est consacr\'e \`a la d\'emonstration d'une formule explicite permettant de calculer la quantit\'e $E(I_{a},I_{b})$ en fonction de la position relative des segments et de leur longueur.

\begin{lemm}\label{lem:sommesigma}
Soient $I_{a},I_{b} \in \Seg_{k}$ des segments embo\^itables. Alors, on a 
\[\ell(I_{a} \cup I_{b})\mu_{I_{a}\cup I_{b}} =\ell(I_{a}) \mu_{I_{a}} + \ell(I_{b})\mu_{I_{b}}.\]
En outre, si $k$ est alg\'ebriquement clos, on a 
\[\sigma_{I_{a}\cup I_{b}} = \sigma_{I_{a}}+\sigma_{I_{b}}.\]
\qed
\end{lemm}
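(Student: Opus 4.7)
The plan is to handle the two assertions in sequence, deriving the second formally from the first via the uniqueness characterisation of $\sigma_I$.

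For the first equality, I would simply note that $\ell(I)\mu_I$ is by construction the canonical Lebesgue measure on the segment $I$ with total mass $\ell(I)$ (and it is the zero measure when $I$ is reduced to a point, since $\ell(I)=0$). Since $I_a$ and $I_b$ are embo\^itables, their intersection $I_a\cap I_b$ is a single point, of Lebesgue measure zero, and $I_a\cup I_b$ is itself a segment satisfying $\ell(I_a\cup I_b)=\ell(I_a)+\ell(I_b)$. The equality
\[\ell(I_a\cup I_b)\mu_{I_a\cup I_b}=\ell(I_a)\mu_{I_a}+\ell(I_b)\mu_{I_b}\]
then follows immediately by additivity of Lebesgue measure across an overlap of measure zero. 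The only verification is length additivity, which is a direct consequence of the formula $\ell(\intff{\eta_{\alpha,r},\eta_{\alpha,s}})=\log(s/r)$ applied after a change of variable bringing the segments to a standard form.

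For the second equality, assume $k$ algebraically closed. I would invoke the uniqueness of $\sigma_I$: it is characterised as the unique continuous subharmonic function on $\E{1}{k}$ whose Laplacian equals $\ell(I)\mu_I$ and which agrees with $\ell(I)\log(\abs{z})$ in a neighbourhood of infinity. Applying $\Delta$ to the candidate $\sigma_{I_a}+\sigma_{I_b}$ produces $\ell(I_a)\mu_{I_a}+\ell(I_b)\mu_{I_b}$, which by the first part equals $\ell(I_a\cup I_b)\mu_{I_a\cup I_b}$. Moreover, near infinity the sum takes the value $(\ell(I_a)+\ell(I_b))\log(\abs{z})=\ell(I_a\cup I_b)\log(\abs{z})$. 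Uniqueness therefore forces $\sigma_{I_a\cup I_b}=\sigma_{I_a}+\sigma_{I_b}$.

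There is no significant obstacle here: both assertions are essentially definitional once the normalisations are unpacked. The only point requiring a touch of care is ensuring that the reduction to segments of the form $\intff{\eta_{\alpha,r},\eta_{\alpha,s}}$ — which makes both the additivity of $\ell$ and the explicit form of $\sigma$ from Lemma~\ref{lem:sigmaalphars} directly available — is compatible with the common endpoint shared by $I_a$ and $I_b$; this poses no difficulty since a single affine change of coordinates handles a pair of embo\^itable segments simultaneously.
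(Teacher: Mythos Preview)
Your proposal is correct and matches what the paper intends: the lemma is stated with a bare \qed, so the paper treats both assertions as immediate from the definitions, and your argument is precisely the natural unpacking of that---additivity of Lebesgue measure for the first part, and the uniqueness characterisation of $\sigma_I$ for the second.
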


\begin{nota}
Pour $I^\ast_{\times} \in \Seg_{k}$, on pose $\ell^\ast_{\times} := \ell(I^\ast_{\times})$.
\end{nota}

\begin{lemm}\label{lem:Eunion} 
Soient $I_{a}$, $I'_{b},I''_{b}$ tels que $I'_{b}$ et $I''_{b}$ soient embo\^itables. Posons $I_{b} := I'_{b}\cup I''_{b}$.
Alors, on a
\[ E(I_{a},I_{b}) = \frac{\ell'_{b}}{\ell_{b}}\, E(I_{a},I'_{b}) + \frac{\ell''_{b}}{\ell_{b}}\, E(I_{a},I''_{b}) - \frac{\ell'_{b} \ell''_{b}}{\ell_{b}^2} \, E(I'_{b},I''_{b}).\]
\end{lemm}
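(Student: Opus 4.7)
The approach is purely algebraic, exploiting the bilinearity of the pairing $(\cdot,\cdot)$ together with the decomposition of $\mu_{I_b}$ provided by Lemma~\ref{lem:sommesigma}. First, I would invoke Lemma~\ref{lem:sommesigma}, which gives
\[ \mu_{I_{b}} = \frac{\ell'_{b}}{\ell_{b}}\, \mu_{I'_{b}} + \frac{\ell''_{b}}{\ell_{b}}\, \mu_{I''_{b}}, \]
since $\ell_{b} = \ell'_{b} + \ell''_{b}$ (a consequence of $I'_{b}$ and $I''_{b}$ being embo\^itables, so that $\ell$ is additive along the union). Writing $\lambda := \ell'_{b}/\ell_{b}$ and $1-\lambda = \ell''_{b}/\ell_{b}$, it follows that
\[ \mu_{I_{a}} - \mu_{I_{b}} = \lambda\,(\mu_{I_{a}} - \mu_{I'_{b}}) + (1-\lambda)\,(\mu_{I_{a}} - \mu_{I''_{b}}). \]

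Next, I would expand $(\mu_{I_{a}}-\mu_{I_{b}},\mu_{I_{a}}-\mu_{I_{b}})$ using bilinearity and symmetry of the mutual energy pairing. Setting $u := \mu_{I_{a}} - \mu_{I'_{b}}$ and $v := \mu_{I_{a}} - \mu_{I''_{b}}$, we obtain
\[ 2\,E(I_{a},I_{b}) = \lambda^2 (u,u) + 2\lambda(1-\lambda)\,(u,v) + (1-\lambda)^2 (v,v). \]
By definition, $(u,u) = 2 E(I_{a},I'_{b})$ and $(v,v) = 2 E(I_{a},I''_{b})$. For the cross term, I would apply the polarization identity $2(u,v) = (u,u) + (v,v) - (u-v,u-v)$, noting that $u-v = \mu_{I''_{b}} - \mu_{I'_{b}}$, so $(u-v,u-v) = 2\,E(I'_{b},I''_{b})$. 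This yields
\[ (u,v) = E(I_{a},I'_{b}) + E(I_{a},I''_{b}) - E(I'_{b},I''_{b}). \]

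Substituting back and simplifying using $\lambda^2 + \lambda(1-\lambda) = \lambda$ and $(1-\lambda)^2 + \lambda(1-\lambda) = 1-\lambda$ gives
\[ E(I_{a},I_{b}) = \lambda\, E(I_{a},I'_{b}) + (1-\lambda)\, E(I_{a},I''_{b}) - \lambda(1-\lambda)\, E(I'_{b},I''_{b}), \]
which is the claimed formula. There is essentially no obstacle: the only subtlety is to ensure that all pairings are well-defined, which is guaranteed by the previous lemma (providing continuous potentials $\sigma_{I}$ after reduction to the algebraically closed case via base change) and by the integrability statements recalled in Section~\ref{sec:energielocale}. The bilinearity used is then the standard one for the symmetric pairing $(\rho,\rho')$ extended to signed differences of such regular measures.
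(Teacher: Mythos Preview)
Your argument is correct and rests on the same core ingredient as the paper's proof: the convex decomposition $\mu_{I_b}=\lambda\,\mu_{I'_b}+(1-\lambda)\,\mu_{I''_b}$ from Lemma~\ref{lem:sommesigma}, together with bilinearity and symmetry of the pairing. The paper carries this out by expanding each $2E(\cdot,\cdot)$ via the integral formula $\int(\sigma_{\ast}/\ell_{\ast}-\sigma_{\bullet}/\ell_{\bullet})\,\diff(\mu_{\bullet}-\mu_{\ast})$ and matching the resulting $\int\sigma_{\ast}\diff\mu_{\bullet}$ terms one by one; your use of the polarization identity packages the same computation more cleanly and avoids writing out the individual integrals.
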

\begin{proof}
On peut supposer que $k$ est alg\'ebriquement clos. D'apr\`es le lemme~\ref{lem:sommesigma},
on a $\ell_{b}\mu_{I_{b}} =\ell'_{b} \mu_{I'_{b}} + \ell''_{b}\mu_{I''_{b}}$ et $\sigma_{I_{b}} = \sigma_{I'_{b}}+\sigma_{I''_{b}}$. 

Nous supprimerons dor\'enavant les~$I$ dans les indices et \'ecrirons~$\sigma_{a}$ au lieu de~$\sigma_{I_{a}}$, $\mu'_{b}$ au lieu de~$\mu_{I'_{b}}$, etc.

Par d\'efinition, on a 
\begin{align*}
2E(I_{a},I_{b}) 
&= \int_{\E{1}{k}} \Big(\frac{1}{\ell_{a}}\sigma_{a}-\frac{1}{\ell_{b}}\sigma_{b}\Big) \diff (\mu_{b}-\mu_{a})\\
&= \frac1{\ell_{a}} \int_{\E{1}{k}} \sigma_{a}\diff \mu_{b} + \frac1{\ell_{b}} \int_{\E{1}{k}} \sigma_{b}\diff \mu_{a}\\
&\quad -\frac1{\ell_{a}}\int_{\E{1}{k}} \sigma_{a}\diff \mu_{a} - \frac1{\ell_{b}}\int_{\E{1}{k}} \sigma_{b}\diff \mu_{b}
\end{align*}
On en d\'eduit que
\begin{align*}
2E(I_{a},I_{b}) 
&= \frac{\ell'_{b}}{\ell_{a}\ell_{b}} \int_{\E{1}{k}} \sigma_{a}\diff \mu'_{b}+  \frac{\ell''_{b}}{\ell_{a}\ell_{b}} \int_{\E{1}{k}} \sigma_{a}\diff \mu''_{b}\\
&\quad + \frac1{\ell_{b}} \int_{\E{1}{k}} \sigma'_{b}\diff \mu_{a} +  \frac1{\ell_{b}} \int_{\E{1}{k}} \sigma''_{b}\diff \mu_{a}\\
&\quad -\frac1{\ell_{a}}\int_{\E{1}{k}} \sigma_{a}\diff \mu_{a} - \frac{\ell'_{b}}{\ell_{b}^2}\int_{\E{1}{k}} \sigma'_{b}\diff \mu'_{b} - \frac{\ell''_{b}}{\ell_{b}^2}\int_{\E{1}{k}} \sigma''_{b}\diff \mu''_{b}\\
&\quad  - \frac{\ell''_{b}}{\ell_{b}^2}\int_{\E{1}{k}} \sigma'_{b}\diff \mu''_{b} - \frac{\ell'_{b}}{\ell_{b}^2}\int_{\E{1}{k}} \sigma''_{b}\diff \mu'_{b}.
\end{align*}
D'autre part, on a 
\begin{align*}
2\frac{\ell'_{b}}{\ell_{b}} E(\mu_{a},\mu'_{b}) 
&= \frac{\ell'_{b}}{\ell_{a}\ell_{b}} \int_{\E{1}{k}} \sigma_{a}\diff \mu'_{b} + \frac1{\ell_{b}} \int_{\E{1}{k}} \sigma'_{b}\diff \mu_{a}\\
&\quad - \frac{\ell'_{b}}{\ell_{a}\ell_{b}} \int_{\E{1}{k}} \sigma_{a}\diff \mu_{a} - \frac1{\ell_{b}}\int_{\E{1}{k}} \sigma'_{b}\diff \mu'_{b},
\end{align*}
\begin{align*}
2\frac{\ell''_{b}}{\ell_{b}} E(\mu_{a},\mu''_{b}) 
&= \frac{\ell''_{b}}{\ell_{a}\ell_{b}} \int_{\E{1}{k}} \sigma_{a}\diff \mu''_{b} + \frac1{\ell_{b}} \int_{\E{1}{k}} \sigma''_{b}\diff \mu_{a}\\
&\quad - \frac{\ell''_{b}}{\ell_{a}\ell_{b}} \int_{\E{1}{k}} \sigma_{a}\diff \mu_{a} - \frac1{\ell_{b}}\int_{\E{1}{k}} \sigma''_{b}\diff \mu''_{b}
\end{align*}
et
\begin{align*}
2\frac{\ell'_{b}\ell''_{b}}{\ell_{b}^2} E(\mu'_{b},\mu''_{b}) 
&= \frac{\ell''_{b}}{\ell_{b}^2}\int_{\E{1}{k}} \sigma'_{b}\diff \mu''_{b} +  \frac{\ell'_{b}}{\ell_{b}^2}\int_{\E{1}{k}} \sigma''_{b}\diff \mu'_{b}\\
&\quad - \frac{\ell''_{b}}{\ell_{b}^2} \int_{\E{1}{k}} \sigma'_{b}\diff \mu'_{b}  - \frac{\ell'_{b}}{\ell_{b}^2}  \int_{\E{1}{k}} \sigma''_{b}\diff \mu''_{b}.
\end{align*}
Le r\'esultat s'en d\'eduit.
\end{proof}

\begin{lemm}\label{lem:Ealigne}
Soient $I_{a},I_{b} \in \Seg_{k}$ des segments align\'es tels que $\sharp (I_{a}\cap I_{b})\le 1$. Alors, on a
\[E(I_{a},I_{b})=\frac16 \, \ell_{a} + \frac16\,\ell_{b} + \frac12\, d_{ab},\]
o\`u $d_{ab}$ d\'esigne la distance de~$I_{a}$ \`a~$I_{b}$.
\end{lemm}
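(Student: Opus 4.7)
Le plan est de se ramener, par changement de base, au cas où~$k$ est algébriquement clos, puis d'effectuer un calcul explicite sur une géodésique standard. À l'aide d'une homographie, on peut placer simultanément les deux segments sur la branche~$\{\eta_{0,t} : t>0\}$ et les écrire sous la forme
\[I_{a} = \intff{\eta_{0,r_{a}},\eta_{0,s_{a}}}, \quad I_{b} = \intff{\eta_{0,r_{b}},\eta_{0,s_{b}}}\]
avec $r_{a}\le s_{a}\le r_{b}\le s_{b}$. Dans ces coordonnées, $\ell_{a} = \log(s_{a}/r_{a})$, $\ell_{b} = \log(s_{b}/r_{b})$ et $d_{ab} = \log(r_{b}/s_{a})$. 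Le cas dégénéré où l'un des segments est un singleton se traite soit directement avec la masse de Dirac associée, soit comme limite.

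Posons $\sigma_{a} := \sigma_{0,r_{a},s_{a}}$ et $\sigma_{b} := \sigma_{0,r_{b},s_{b}}$, qui sont des potentiels de $\ell_{a}\mu_{I_{a}}$ et $\ell_{b}\mu_{I_{b}}$ respectivement d'après le lemme~\ref{lem:sigmaalphars}. L'étape principale consiste à évaluer les quatre intégrales
\[\int \sigma_{a}\diff\mu_{b}, \quad \int \sigma_{b}\diff\mu_{a}, \quad \int \sigma_{a}\diff\mu_{a}, \quad \int \sigma_{b}\diff\mu_{b}\]
en utilisant la paramétrisation $u = \log\abs{z}$ sur la géodésique standard, qui transporte $\mu_{I_{a}}$ (resp.~$\mu_{I_{b}}$) en la mesure uniforme $\frac{\du}{\ell_{a}}$ sur $\intff{\log r_{a},\log s_{a}}$ (resp.~$\frac{\du}{\ell_{b}}$ sur $\intff{\log r_{b},\log s_{b}}$).

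L'observation clé qui rend le calcul transparent est la suivante. Sur le support~$I_{b}$, on se trouve dans le régime $\abs{z}\ge s_{a}$, donc $\sigma_{a}$ est \emph{linéaire} en~$u$, égale à $\ell_{a}u$. Sur le support~$I_{a}$, on se trouve dans le régime $\abs{z}\le r_{b}$, donc $\sigma_{b}$ est \emph{constante}, égale à $\frac{1}{2}(\log s_{b})^2 - \frac{1}{2}(\log r_{b})^2$. Les intégrales croisées sont donc immédiates~; les intégrales diagonales sont des intégrales quadratiques élémentaires en~$u$. En assemblant le tout via
\[2E(I_{a},I_{b}) = \int \Big(\frac{\sigma_{a}}{\ell_{a}}-\frac{\sigma_{b}}{\ell_{b}}\Big)\diff(\mu_{b}-\mu_{a})\]
et en exprimant les résultats en fonction de $\ell_{a}$, $\ell_{b}$, $d_{ab}$, on obtient $(\ell_{a}+\ell_{b}+3d_{ab})/3$, soit exactement deux fois la formule annoncée.

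Le principal obstacle est de nature purement calculatoire (tenue des termes quadratiques dans l'intégrale diagonale)~; il n'y a aucune difficulté conceptuelle. On peut effectuer une vérification de cohérence en considérant le cas limite de deux segments emboîtables (c'est-à-dire $d_{ab}=0$, $\sharp(I_{a}\cap I_{b})=1$), qui peut aussi se retrouver à partir du lemme~\ref{lem:Eunion} appliqué à $I_{a}\cup I_{b}$ et en utilisant $E(I,I)=0$.
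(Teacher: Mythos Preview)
Your proposal is correct and follows essentially the same approach as the paper's proof: reduce to the algebraically closed case, normalize the two aligned segments onto a single radial branch $\intff{\eta_{0,r_a},\eta_{0,s_a}}$ and $\intff{\eta_{0,r_b},\eta_{0,s_b}}$ with $r_a\le s_a\le r_b\le s_b$, and compute the four integrals using the explicit potential of lemme~\ref{lem:sigmaalphars}. Your emphasis on the fact that $\sigma_a$ is linear on~$I_b$ and $\sigma_b$ is constant on~$I_a$ is exactly what makes the cross-terms in the paper's calculation collapse to $\frac12(\log u+\log t)$ each; the paper simply carries out these same four integrals without isolating this observation.
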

\begin{proof}
On peut supposer que $k$ est alg\'ebriquement clos. Quitte \`a changer de coordonn\'ee, on peut supposer qu'il existe $\alpha\in k$ et $r,s,t,u \in \R_{>0}$ avec $r\le s\le t\le u$ tels que $I_{a}=\intff{\eta_{\alpha,r},\eta_{\alpha,s}}$ et $I_{b}=\intff{\eta_{\alpha,t},\eta_{\alpha,u}}$. D\'efinissons $\sigma_{\alpha,r,s}$ et $\sigma_{\alpha,t,u}$ comme dans le lemme~\ref{lem:sigmaalphars}. On a alors 
\begin{align*}
2E(I_{a},I_{b}) &= \int_{\E{1}{k}} \Big(\frac{1}{\ell_{a}}\sigma_{\alpha,r,s}-\frac{1}{\ell_{b}}\sigma_{\alpha,t,u}\Big) \diff (\mu_{I_{b}}-\mu_{I_{a}})\\
&= \frac1{\ell_{a}}  \int_{\E{1}{k}} \sigma_{\alpha,r,s} \diff \mu_{I_{b}} + \frac1{\ell_{b}}  \int_{\E{1}{k}} \sigma_{\alpha,t,u} \diff \mu_{I_{a}}\\
&\quad - \frac1{\ell_{a}}  \int_{\E{1}{k}} \sigma_{\alpha,r,s} \diff \mu_{I_{a}} - \frac1{\ell_{b}}  \int_{\E{1}{k}} \sigma_{\alpha,t,u} \diff \mu_{I_{b}}.
\end{align*}
Un calcul direct utilisant l'expression explicite de~$\sigma_{\alpha,r,s}$ montre que l'on a
\begin{align*} 
\int_{\E{1}{k}} \sigma_{\alpha,r,s} \diff (\ell_{a}\mu_{I_{a}})&= \int_{\log(r)}^{\log(s)} \Big( \frac12 x^2-\log(r)x + \frac12 \log(s)^2\Big) \diff x\\
 &=\frac23 \log(s)^3+\frac13\log(r)^3 -\log(r)\log(s)^2\\
&= \frac13
(2\log(s)+\log(r))(\log(s)-\log(r))^2,
\end{align*}
d'o\`u
\begin{align*}
\frac1{\ell_{a}}  \int_{\E{1}{k}} \sigma_{\alpha,r,s} \diff \mu_{I_{a}} 
&= \frac23 \log(s)+\frac13\log(r).
\end{align*}
De m\^eme, on a 
\[ \frac1{\ell_{b}}  \int_{\E{1}{k}} \sigma_{\alpha,t,u} \diff \mu_{I_{b}} =  \frac23 \log(u) +\frac13 \log(t).\]
On a \'egalement 
\begin{align*} 
\frac1{\ell_{a}} \int_{\E{1}{k}} \sigma_{\alpha,r,s} \diff \mu_{I_{b}} &= \frac{1}{\ell_{a}\ell_{b}} \int_{\log(t)}^{\log(u)} \ell_{a} x\diff x\\
&= \frac12 (\log(u)+\log(t)),
\end{align*}
et finalement
\begin{align*} 
\frac1{\ell_{b}}  \int_{\E{1}{k}} \sigma_{\alpha,t,u} \diff \mu_{I_{a}} &=\frac1{\ell_{a}\ell_{b}}  \int_{\log(r)}^{\log(s)} \frac12(\log(u)^2 - \log(t)^2) \diff x\\
&= \frac12 (\log(u)+\log(t)).
\end{align*}
En sommant les diff\'erents termes, on obtient
\begin{align*}
2E(I_{a},I_{b}) &= -\frac23 \log(s) -\frac13\log(r) +\frac13\log(u)+\frac23\log(t)\\
&=\frac13\, \ell_{a} + \frac13\,\ell_{b} + d_{ab}.
\end{align*}
\end{proof}

Commen\c cons par calculer $E(I_{a},I_{b})$ dans le cas o\`u l'intersection $I_{a}\cap I_{b}$ est petite, c'est-\`a-dire vide ou un singleton. On peut alors \'ecrire de fa\c con unique $I_{a} = I'_{a} \cup I''_{a}$ et $I_{b} = I'_{b} \cup I''_{b}$ avec $I'_{a}\cap I''_{a} = \{z_{a}\}$, $I'_{b}\cap I''_{b} = \{z_{b}\}$ et $d_{ab} = \ell(\intff{z_{a},z_{b}})$, o\`u~$d_{ab}$ d\'esigne la distance de~$I_{a}$ \`a~$I_{b}$ (\cf~figure~\ref{fig:intersectionpetite}). 

\begin{figure}[!h]
\centering
\begin{tikzpicture}
\draw  (-4.5,2.2)-- (-3.9,1.6);
\draw  (-3.9,1.6)-- (-4.5,0.1);
\draw (-1.8,1.6)-- (-0.8,2.1);
\draw  (-1.8,1.6)-- (-1.3,0.6);
\draw [<->,dashed] (-3.8,1.6) -- (-1.9,1.6);
\begin{scriptsize}
\draw[fill=black] (-3.9,1.6) circle (1pt);
\draw (-3.8,1.9) node {$z_a$};
\draw (-4.5,1.8) node {$I''_a$};
\draw (-4.6,0.7) node {$I'_a$};
\draw[fill=black] (-1.8,1.6) circle (1pt);
\draw (-1.8,1.9) node {$z_b$};
\draw (-0.8,1.8) node {$I''_b$};
\draw (-1.2,1) node {$I'_b$};
\draw (-2.8,1.4) node {$d_{ab}$};
\end{scriptsize}
\end{tikzpicture}
\caption{Segments~$I_{a}$ et $I_{b}$ disjoints.}\label{fig:intersectionpetite}
\end{figure}
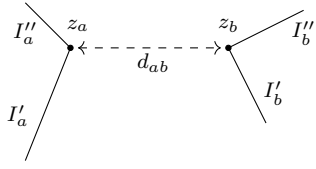

\begin{theo}\label{thm:Eintersectionvide}
Soient $I_{a},I_{b}\in \Seg_{k}$ tels que $\sharp (I_{a}\cap I_{b}) \le 1$. 
Alors, avec les notations de la figure~\ref{fig:intersectionpetite}, on a
\[E(I_{a},I_{b}) = \frac16\, \ell_{a} + \frac16\, \ell_{b} +\frac12 \, d_{ab}  - \frac12 \frac{\ell'_{a} \ell''_{a}}{\ell_{a}} -\frac12 \frac{\ell'_{b} \ell''_{b}}{\ell_{b}}.\]

En particulier, on a
\[E(\mu_{a},\mu_{b}) \ge \frac1{24}\, \ell_{a} + \frac1{24}\, \ell_{b} + \frac12 \,d_{ab}.\]
\end{theo}
\begin{proof}
D'apr\`es les lemmes~\ref{lem:Eunion} et~\ref{lem:Ealigne}, on a
\begin{align*}
E(I'_{a},I_{b}) &= E(I'_{a}, I'_{b}\cup I''_{b})\\
&=  \frac{\ell'_{b}}{\ell_{b}}\, E(I'_{a},I'_{b}) + \frac{\ell''_{b}}{\ell_{b}}\, E(I'_{a},I''_{b}) - \frac{\ell'_{b} \ell''_{b}}{\ell_{b}^2} \, E(I'_{b},I''_{b})\\
&=  \frac{\ell'_{b}}{\ell_{b}} \big( \frac16\, \ell'_{a} + \frac16\, \ell'_{b} + \frac12\,d_{ab}\big) +  \frac{\ell''_{b}}{\ell_{b}} \big( \frac16\, \ell'_{a} + \frac16\, \ell''_{b} + \frac12 \,d_{ab}\big)  - \frac{\ell'_{b} \ell''_{b}}{\ell_{b}^2} \big( \frac16\, \ell_{b})\\
& =  \frac16\, \ell'_{a}  + \frac12\,d_{ab} + \frac1{6\ell_{b}} \, ({\ell'_{b}}^2+{\ell''_{b}}^2 -\ell'_{b}\ell''_{b})\\
& = \frac16\, \ell'_{a}  +\frac12\, d_{ab} +  \frac13\, \ell_{b}  -\frac12 \frac{\ell'_{b} \ell''_{b}}{\ell_{b}} 
\end{align*}
De m\^eme, on a 
\[E(I''_{a},I_{b}) =  \frac16\, \ell''_{a}  +\frac12\, d_{ab} +  \frac16\, \ell_{b}  - \frac12 \frac{\ell'_{b} \ell''_{b}}{\ell_{b}}. 
\]
Finalement, on calcule
\begin{align*}
E(I_{a},I_{b}) &= E(I'_{a}\cup I''_{a}, I_{b})\\
&=  \frac{\ell'_{a}}{\ell_{a}}\, E(I'_{a},I_{b}) + \frac{\ell''_{a}}{\ell_{a}}\, E(I''_{a},I_{b}) - \frac{\ell'_{a} \ell''_{a}}{\ell_{a}^2} \, E(I'_{a},I''_{a})\\
&=  \frac{\ell'_{a}}{\ell_{a}} \big(  \frac16\, \ell'_{a}  + \frac12\,d_{ab} +  \frac12\, \ell_{b}  -\frac12 \frac{\ell'_{b} \ell''_{b}}{\ell_{b}} \big) +  \frac{\ell''_{a}}{\ell_{a}} \big(  \frac16\, \ell''_{a}  + \frac12\,d_{ab} +  \frac16\, \ell_{b}  -\frac12 \frac{\ell'_{b} \ell''_{b}}{\ell_{b}}  \big)  - \frac12 \frac{\ell'_{a} \ell''_{a}}{\ell_{a}^2} \big( \frac16\, \ell_{a})\\
& = \frac12\,d_{ab} +  \frac16\, \ell_{b} -\frac12 \frac{\ell'_{b} \ell''_{b}}{\ell_{b}} + \frac1{6\ell_{a}}\, ({\ell'_{a}}^2+{\ell''_{a}}^2 -\ell'_{a}\ell''_{a})\\
& = \frac12\,d_{ab} +  \frac16\, \ell_{b} -\frac12 \frac{\ell'_{b} \ell''_{b}}{\ell_{b}} + \frac16\, \ell_{a} -\frac12 \frac{\ell'_{a} \ell''_{a}}{\ell_{a}} .
\end{align*}

La minoration finale se d\'emontre \`a l'aide des in\'egalit\'es $\ell'_{a} \ell''_{a} \le \frac14\,\ell_{a}^2$ et $\ell'_{b} \ell''_{b} \le \frac14\,\ell_{b}^2$.  
\end{proof}

Pla\c cons-nous maintenant dans le cas o\`u les segments se rencontrent. Posons $I_{ab} := I_{a}\cap I_{b}$. On peut alors \'ecrire de fa\c con unique $I_{a} = I'_{a} \cup I_{ab} \cup I''_{a}$ et $I_{b} = I'_{b} \cup I_{ab} \cup I''_{b}$, o\`u les segments $I'_{a}$ et~$I_{ab}$, $I_{ab}$ et~$I''_{a}$, $I'_{b}$ et~$I_{ab}$, $I_{ab}$ et~$I''_{b}$ sont embo\^it\'es et~$I'_{a}$ et~$I'_{b}$ sont situ\'es du m\^eme c\^ot\'e de~$I_{ab}$ (\cf~figure~\ref{fig:intersectionnonvide}). 

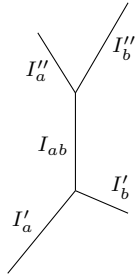
\begin{figure}[!h]
\centering
\begin{tikzpicture}
\draw  (-3.6,2.5)-- (-3.6,1.2);
\draw  (-3.6,1.2)-- (-4.5,0.1);
\draw(-3.6,2.5)-- (-4.1,3.3);
\draw  (-3.6,2.5)-- (-2.9,3.7);
\draw  (-3.6,1.2)-- (-2.9,0.9);
\begin{scriptsize}
\draw(-3.9,1.8) node {$I_{ab}$};
\draw (-4.3,0.8) node {$I'_a$};
\draw (-4.1,2.8) node {$I''_a$};
\draw(-2.95,3.1) node {$I''_b$};
\draw (-3,1.25) node {$I'_b$};
\end{scriptsize}
\end{tikzpicture}
\caption{Segments~$I_{a}$ et $I_{b}$ qui se rencontrent.}\label{fig:intersectionnonvide}
\end{figure}

\begin{lemm}\label{lem:EI1dansI2}
Soient $I_{a},I_{b} \in \Seg_{k}$ tels que $I_{a}\subset I_{b}$. Alors, avec les notations de la figure~\ref{fig:intersectionnonvide}, on a
\[E(I_{a},I_{b}) = \frac16 \ell_{b} - \frac13 \ell_{a} + \frac16\frac{\ell_{a}^2}{\ell_{b}} - \frac12\frac{\ell'_{b}\ell''_{b}}{\ell_{b}}.\] 
\end{lemm}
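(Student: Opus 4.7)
Le plan est d'exploiter la bilin\'earit\'e fournie par le lemme~\ref{lem:Eunion} pour r\'eduire le calcul de $E(I_{a},I_{b})$, dans la situation $I_{a}\subset I_{b}$, \`a des \'energies entre segments align\'es partageant une extr\'emit\'e, explicitement calculables gr\^ace au lemme~\ref{lem:Ealigne}. Comme $I_{a}\subset I_{b}$ \'equivaut \`a $I_{ab}=I_{a}$ (et donc, avec la convention de la figure~\ref{fig:intersectionnonvide}, $I'_{a}=I''_{a}=\emptyset$), on dispose de la d\'ecomposition $I_{b}=I'_{b}\cup I_{a}\cup I''_{b}$ en segments deux \`a deux embo\^it\'es, avec $\ell_{b}=\ell'_{b}+\ell_{a}+\ell''_{b}$.

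Concr\`etement, je poserais $J:=I'_{b}\cup I_{a}$, avec $\ell_{J}=\ell'_{b}+\ell_{a}$, de sorte que $J$ et $I''_{b}$ sont embo\^itables. Le lemme~\ref{lem:Eunion} appliqu\'e \`a la d\'ecomposition $I_{b}=J\cup I''_{b}$ donnerait
\[ E(I_{a},I_{b}) = \frac{\ell_{J}}{\ell_{b}}\,E(I_{a},J) + \frac{\ell''_{b}}{\ell_{b}}\,E(I_{a},I''_{b}) - \frac{\ell_{J}\ell''_{b}}{\ell_{b}^2}\,E(J,I''_{b}), \]
puis, par une seconde application du m\^eme lemme \`a $J=I_{a}\cup I'_{b}$ et en utilisant l'annulation $E(I_{a},I_{a})=0$ (imm\'ediate sur la d\'efinition), on aurait
\[ E(I_{a},J) = \frac{{\ell'_{b}}^2}{\ell_{J}^2}\,E(I_{a},I'_{b}). \]
Les trois \'energies restantes $E(I_{a},I'_{b})$, $E(I_{a},I''_{b})$ et $E(J,I''_{b})$ portent sur des paires align\'ees partageant une extr\'emit\'e ($d_{\cdot,\cdot}=0$) et se calculent directement par le lemme~\ref{lem:Ealigne}~: elles valent respectivement $\tfrac{1}{6}(\ell_{a}+\ell'_{b})$, $\tfrac{1}{6}(\ell_{a}+\ell''_{b})$ et $\tfrac{1}{6}\ell_{b}$ (puisque $\ell_{J}+\ell''_{b}=\ell_{b}$).

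Apr\`es substitution, le facteur $\ell_{J}$ se simplifie dans le premier terme, et regroupant tout sur le d\'enominateur commun~$6\ell_{b}$ on tomberait sur
\[ E(I_{a},I_{b}) = \frac{{\ell'_{b}}^2+{\ell''_{b}}^2-\ell'_{b}\ell''_{b}}{6\,\ell_{b}}. \]
L'identit\'e $\ell'_{b}+\ell''_{b}=\ell_{b}-\ell_{a}$ permet alors de r\'ecrire le num\'erateur sous la forme $(\ell'_{b}+\ell''_{b})^2-3\ell'_{b}\ell''_{b}=\ell_{b}^2-2\ell_{a}\ell_{b}+\ell_{a}^2-3\ell'_{b}\ell''_{b}$, ce qui redonne exactement, apr\`es division par~$6\ell_{b}$, la formule $\tfrac16\ell_{b}-\tfrac13\ell_{a}+\tfrac16\ell_{a}^2/\ell_{b}-\tfrac12\ell'_{b}\ell''_{b}/\ell_{b}$ annonc\'ee.

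Aucune vraie difficult\'e n'est attendue ici~: tout le contenu analytique a \'et\'e mis en place en amont par les lemmes~\ref{lem:Eunion} et~\ref{lem:Ealigne}, et la seule observation r\'eellement sp\'ecifique au cas $I_{a}\subset I_{b}$ est l'annulation $E(I_{a},I_{a})=0$, qui \'elimine l'une des contributions diagonales lors de la seconde d\'ecomposition. Le seul point de vigilance est le suivi des indices et le choix du d\'ecoupage de~$I_{b}$~; un d\'ecoupage sym\'etrique de type $I_{b}=I'_{b}\cup(I_{a}\cup I''_{b})$ m\`enerait au m\^eme r\'esultat, avec des calculs essentiellement identiques gr\^ace \`a la sym\'etrie en $(\ell'_{b},\ell''_{b})$ du r\'esultat final.
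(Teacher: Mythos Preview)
The proposal is correct and follows essentially the same approach as the paper: both decompose $I_{b}=(I'_{b}\cup I_{a})\cup I''_{b}$, apply the lemme~\ref{lem:Eunion} twice (using $E(I_{a},I_{a})=0$ at the inner step), and reduce to the three aligned-adjacent energies $E(I_{a},I'_{b})$, $E(I_{a},I''_{b})$, $E(I'_{b}\cup I_{a},I''_{b})$ before the same algebraic simplification. The only cosmetic difference is that the paper cites th\'eor\`eme~\ref{thm:Eintersectionvide} rather than lemme~\ref{lem:Ealigne} for these base values, which amounts to the same thing in the aligned, zero-distance case.
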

\begin{proof}
D'apr\`es le th\'eor\`eme~\ref{thm:Eintersectionvide}, on a
\[E(I_{a},I'_{b}) = \frac16 (\ell_{a} +  \ell'_{b}),\ E(I_{a},I''_{b}) = \frac16 (\ell_{a} +  \ell''_{b}) \textrm{ et }
E(I'_{b}\cup I_{a},I''_{b}) = \frac16\ell_{b}.\]
D'apr\`es le lemme~\ref{lem:Eunion}, on a 
\begin{align*}
E(I_{a},I'_{b}\cup I_{a}) &= \frac{\ell'_{b}}{\ell'_{b}+\ell_{a}} \, E(I_{a},I'_{b}) + \frac{\ell_{a}}{\ell'_{b}+\ell_{a}} \, E(I_{a},I_{a}) - \frac{\ell'_{b} \ell_{a}}{(\ell'_{b}+\ell_{a})^2} \, E(I'_{b},I_{a})\\
&= \frac{\ell'_{b}(\ell'_{b}+\ell_{a}) - \ell'_{b} \ell_{a}}{(\ell'_{b}+\ell_{a})^2} \frac16 (\ell_{a} +  \ell'_{b})\\
&=\frac16 \frac{{\ell'_{b}}^2}{\ell_{a}+\ell'_{b}},
\end{align*}
puis
\begin{align*}
E(I_{a},I_{b}) &= E(I_{a},(I'_{b}\cup I_{a})\cup I''_{b})\\
&= \frac{\ell'_{b}+\ell_{a}}{\ell_{b}}\, E(I_{a},I'_{b}\cup I_{a}) + \frac{\ell''_{b}}{\ell_{b}}\, E(I_{a},I''_{b}) -  \frac{(\ell'_{b}+\ell_{a})\ell''_{b}}{\ell_{b}^2} \, E(I'_{b}\cup I_{a},I''_{b})\\
&= \frac16 \frac{{\ell'_{b}}^2}{\ell_{b}} + \frac16 \frac{\ell''_{b}}{\ell_{b}}\, (\ell_{a} + \ell''_{b}) - \frac16 \frac{(\ell'_{b}+\ell_{a})\ell''_{b}}{\ell_{b}}\\
&= \frac16 \frac{{\ell'_{b}}^2+{\ell''_{b}}^2-\ell'_{b}\ell''_{b}}{\ell_{b}}\\
&=\frac16 \frac{(\ell'_{b}+\ell_{a}+\ell''_{b})^2-2\ell_{a}(\ell'_{b}+\ell_{a}+\ell''_{b}) + \ell_{a}^2-3\ell'_{b}\ell''_{b}}{\ell_{b}}\\
&= \frac16 \ell_{b} - \frac13 \ell_{a} + \frac16\frac{\ell_{a}^2}{\ell_{b}} - \frac12\frac{\ell'_{b}\ell''_{b}}{\ell_{b}}. 
\end{align*}
\end{proof}

\begin{lemm}\label{lem:Ealignessansinclusion}
Soient $I_{a},I_{b} \in \Seg_{k}$ tels que $I_{a}\cap I_{b} \ne \emptyset$, $I_{a} \not\subset I_{b}$ et $I_{b} \not\subset I_{a}$. Alors, avec les notations de la figure~\ref{fig:intersectionnonvide}, on a
\[E(I_{a},I_{b}) = \frac16 \ell_{a} + \frac16\ell_{b} - \frac12\ell_{ab} +\frac16 \frac{\ell_{ab}^3}{\ell_{a}\ell_{b}}.\] 
\end{lemm}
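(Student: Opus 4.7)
Mon plan est de d\'ecomposer~$I_{b}$ par applications successives du lemme~\ref{lem:Eunion} jusqu'\`a ce que chacune des sous-\'energies \`a calculer rel\`eve d'un cas d\'ej\`a trait\'e~: lemme~\ref{lem:Ealigne} (segments align\'es se rencontrant en au plus un point), th\'eor\`eme~\ref{thm:Eintersectionvide} (intersection r\'eduite \`a au plus un point), ou lemme~\ref{lem:EI1dansI2} (un segment contenu dans l'autre).

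J'\'ecrirais tout d'abord $I_{b} = (I'_{b}\cup I_{ab})\cup I''_{b}$, ces deux morceaux \'etant embo\^itables. Le lemme~\ref{lem:Eunion} exprime alors $E(I_{a},I_{b})$ en fonction de $E(I_{a}, I'_{b}\cup I_{ab})$, $E(I_{a},I''_{b})$ et $E(I'_{b}\cup I_{ab}, I''_{b})$. Les deux derniers sont imm\'ediats~: l'intersection $I_{a}\cap I''_{b}$ est r\'eduite \`a l'extr\'emit\'e sup\'erieure de~$I_{ab}$, donc $E(I_{a},I''_{b})$ se calcule par le th\'eor\`eme~\ref{thm:Eintersectionvide}~; et les segments $I'_{b}\cup I_{ab}$ et~$I''_{b}$ \'etant embo\^itables, le lemme~\ref{lem:Ealigne} donne $E(I'_{b}\cup I_{ab}, I''_{b}) = \frac{1}{6}\ell_{b}$.

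Il reste \`a \'evaluer $E(I_{a}, I'_{b}\cup I_{ab})$, l'intersection $(I'_{b}\cup I_{ab})\cap I_{a}$ valant encore~$I_{ab}$, qui est un segment non trivial. J'appliquerais donc une seconde fois le lemme~\ref{lem:Eunion} \`a la d\'ecomposition $I'_{b}\cup I_{ab}$, ce qui ram\`ene le calcul aux trois quantit\'es $E(I_{a},I'_{b})$, $E(I_{a},I_{ab})$ et $E(I'_{b},I_{ab})$, donn\'ees respectivement par le th\'eor\`eme~\ref{thm:Eintersectionvide} (puisque $I_{a}\cap I'_{b}$ est r\'eduit \`a l'extr\'emit\'e inf\'erieure de~$I_{ab}$), par le lemme~\ref{lem:EI1dansI2} (puisque $I_{ab}\subset I_{a}$), et par le lemme~\ref{lem:Ealigne} (puisque $I'_{b}$ et $I_{ab}$ sont embo\^itables).

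La principale difficult\'e sera la simplification alg\'ebrique finale~: les formules interm\'ediaires font intervenir les longueurs auxiliaires $\ell'_{a},\ell''_{a},\ell'_{b},\ell''_{b}$, alors que l'expression cible n'en d\'epend pas. Je dois donc v\'erifier que les termes en $\ell'_{a}\ell''_{a}$ (issus du lemme~\ref{lem:EI1dansI2} et du th\'eor\`eme~\ref{thm:Eintersectionvide}) et en $\ell'_{b}\ell''_{b}$ (issus du th\'eor\`eme~\ref{thm:Eintersectionvide}) se compensent apr\`es pond\'eration, ce qui est pr\'evisible puisque le r\'esultat final doit \^etre sym\'etrique en $(\ell_{a},\ell_{b},\ell_{ab})$.
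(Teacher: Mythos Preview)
Ta proposition repose sur une lecture trop large de l'\'enonc\'e. Comme le sugg\`ere le nom du lemme (\emph{align\'es sans inclusion}) et comme la preuve du papier le rend explicite d\`es la premi\`ere phrase, le lemme concerne la configuration o\`u chacun des deux segments ne d\'eborde de~$I_{ab}$ que d'un seul c\^ot\'e~: quitte \`a \'echanger les extr\'emit\'es, on peut supposer $I''_{a}=I'_{b}=\emptyset$. Le papier d\'ecompose alors simplement $I_{b}=I_{ab}\cup I''_{b}$ et applique une seule fois le lemme~\ref{lem:Eunion}, avec $E(I_{ab},I_{a})$ donn\'e par le lemme~\ref{lem:EI1dansI2} et $E(I_{a},I''_{b})$, $E(I_{ab},I''_{b})$ donn\'es par le lemme~\ref{lem:Ealigne}.

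Ton sch\'ema \`a deux applications du lemme~\ref{lem:Eunion} calcule en r\'ealit\'e $E(I_{a},I_{b})$ dans la configuration g\'en\'erale (celle du th\'eor\`eme~\ref{thm:Ecasgeneral}), o\`u les quatre morceaux $I'_{a},I''_{a},I'_{b},I''_{b}$ peuvent \^etre non vides. Ton attente que les termes en~$\ell'_{a}\ell''_{a}$, $\ell'_{b}\ell''_{b}$, etc.\ se compensent est donc fausse~: ces termes \emph{subsistent} dans le cas g\'en\'eral, comme le montre la formule du th\'eor\`eme~\ref{thm:Ecasgeneral}. L'argument de sym\'etrie que tu invoques est circulaire (tu supposes que le r\'esultat ne d\'epend que de~$\ell_{a},\ell_{b},\ell_{ab}$ pour en d\'eduire que les autres termes s'annulent). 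Une fois la r\'eduction $I''_{a}=I'_{b}=\emptyset$ comprise, ta seconde application du lemme~\ref{lem:Eunion} devient vide ($I'_{b}\cup I_{ab}=I_{ab}$) et ta d\'emarche co\"incide avec celle du papier.
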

\begin{proof}
On peut supposer que $I''_{a} = I'_{b} = \emptyset$. D'apr\`es les lemmes~\ref{lem:EI1dansI2}et~\ref{lem:Ealigne}, on a
\[E(I_{ab},I_{a}) = \frac16 \ell_{a} -\frac13  \ell_{ab} + \frac16 \frac{\ell_{ab}^2}{\ell_{a}},\ 
E(I_{a},I''_{b}) = \frac16 (\ell_{a} +  \ell''_{b}) \textrm{ et }
E(I_{ab},I''_{b}) = \frac16\ell_{b}. \]
D'apr\`es le lemme~\ref{lem:Eunion}, on a 
\begin{align*}
E(I_{a},I_{ab}\cup I''_{b}) &= \frac{\ell_{ab}}{\ell_{b}} \, E(I_{a},I_{ab}) + \frac{\ell''_{b}}{\ell_{b}} \, E(I_{a},I''_{b}) - \frac{\ell_{ab} \ell''_{b}}{\ell_{b}^2} \, E(I_{ab},I''_{b})\\
&= \frac1{6\ell_{b}} \Big(\ell_{ab}\big(\ell_{a}-2\ell_{ab}+\frac{\ell_{ab}^2}{\ell_{a}}\big) + \ell''_{b}(\ell_{a}+\ell''_{b}) - \ell_{ab}\ell''_{b}\Big)\\
&= \frac1{6\ell_{b}} \Big( \ell_{a}\ell_{b} + \ell_{b}^2 -3 \ell_{ab}^2 - 3\ell_{ab}\ell''_{b} + \frac{\ell_{ab}^3}{\ell_{a}}\Big)\\
&=\frac16 \ell_{a} + \frac16\ell_{b}- \frac12\ell_{ab} +\frac16 \frac{\ell_{ab}^3}{\ell_{a}\ell_{b}}.
\end{align*}
\end{proof}

\begin{lemm}\label{lem:Edepasseduncote}
Soient $I_{a},I_{b} \in \Seg_{k}$ tels que $I_{a}\cap I_{b} \ne \emptyset$. Avec les notations de la figure~\ref{fig:intersectionnonvide}, supposons que $I''_{a} = \emptyset$. Alors, on a
\[E(I_{a},I_{b}) = 
\frac16 \ell_{a} + \frac16\ell_{b}-\frac12\ell_{ab} +\frac16 \frac{\ell_{ab}^3}{\ell_{a}\ell_{b}} - \frac12\frac{\ell'_{b}\ell''_{b}}{\ell_{b}} - \frac12\frac{\ell'_{a}\ell'_{b}\ell_{ab}}{\ell_{a}\ell_{b}}.
\] 
\end{lemm}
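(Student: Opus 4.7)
The plan is to reduce the computation to three previously established cases by applying the decomposition formula of lemme~\ref{lem:Eunion}. Let~$P$ denote the common endpoint of~$I'_a$, $I'_b$ and~$I_{ab}$ (the extremity of~$I_{ab}$ on the side where both~$I_a$ and~$I_b$ branch off), and set $\tilde I_b := I_{ab} \cup I''_b$, so that $I_b = I'_b \cup \tilde I_b$ is a decomposition into segments embo\^itables meeting only at~$P$. Lemme~\ref{lem:Eunion} then yields
\[E(I_a, I_b) = \frac{\ell'_b}{\ell_b}\, E(I_a, I'_b) + \frac{\ell_{ab} + \ell''_b}{\ell_b}\, E(I_a, \tilde I_b) - \frac{\ell'_b(\ell_{ab} + \ell''_b)}{\ell_b^2}\, E(I'_b, \tilde I_b).\]

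Each of the three pairings on the right falls under a previous result. First, $I_a \cap I'_b = \{P\}$ is a singleton, so théorème~\ref{thm:Eintersectionvide} applies: the branching point~$z_a = P$ splits~$I_a$ into the subsegments~$I'_a$ (of length~$\ell'_a$) and~$I_{ab}$ (of length~$\ell_{ab}$), while the decomposition of~$I'_b$ at~$P$ is degenerate (one half of length~$0$) and the distance~$d_{ab}$ vanishes, giving $E(I_a,I'_b) = \frac16 \ell_a + \frac16 \ell'_b - \frac12 \frac{\ell'_a \ell_{ab}}{\ell_a}$. Second, $I_a \cap \tilde I_b = I_{ab}$ without inclusion, with the overflows~$I'_a$ on one side of~$I_{ab}$ (from~$I_a$) and~$I''_b$ on the other (from~$\tilde I_b$): this is exactly the aligned configuration treated by lemme~\ref{lem:Ealignessansinclusion}, which gives $E(I_a, \tilde I_b) = \frac16 \ell_a + \frac16 \ell_{\tilde b} - \frac12 \ell_{ab} + \frac16 \frac{\ell_{ab}^3}{\ell_a \ell_{\tilde b}}$ with $\ell_{\tilde b} = \ell_{ab} + \ell''_b$. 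Finally, $I'_b$ and~$\tilde I_b$ are embo\^itables of total length~$\ell_b$ meeting at~$P$, so lemme~\ref{lem:Ealigne} gives $E(I'_b, \tilde I_b) = \frac16 \ell_b$.

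Substituting these three explicit expressions into the decomposition formula then yields the desired identity after simplification. Note that the cube term $\frac16 \frac{\ell_{ab}^3}{\ell_a \ell_b}$ appears directly from the corresponding term of~$E(I_a, \tilde I_b)$ after multiplication by~$\frac{\ell_{\tilde b}}{\ell_b}$ (the factor~$\ell_{\tilde b}$ cancels cleanly). The main obstacle is the algebraic bookkeeping for the remaining terms: the coefficients of~$\ell_a$ sum to~$\frac16$ using $\ell'_b + \ell_{\tilde b} = \ell_b$; the purely quadratic contributions in~$\ell'_b$ and~$\ell_{\tilde b}$ rearrange via the identity ${\ell'_b}^2 + \ell_{\tilde b}^2 - \ell'_b \ell_{\tilde b} = \ell_b^2 - 3 \ell'_b \ell_{\tilde b}$; and the mixed terms involving~$\ell_{ab}$ collapse upon expanding $\ell_{\tilde b} = \ell_{ab} + \ell''_b$ and using $\ell'_b + \ell''_b = \ell_b - \ell_{ab}$ to produce the expected $-\frac12 \ell_{ab} - \frac12 \frac{\ell'_b \ell''_b}{\ell_b} - \frac12 \frac{\ell'_a \ell'_b \ell_{ab}}{\ell_a \ell_b}$.
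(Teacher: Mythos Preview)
Your proposal is correct and follows exactly the same route as the paper's own proof: the paper also decomposes $I_b = I'_b \cup (I_{ab}\cup I''_b)$, applies lemme~\ref{lem:Eunion}, and evaluates the three resulting pairings via th\'eor\`eme~\ref{thm:Eintersectionvide}, lemme~\ref{lem:Ealignessansinclusion}, and lemme~\ref{lem:Ealigne} respectively, before carrying out the same algebraic simplification.
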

\begin{proof}
D'apr\`es le th\'eor\`eme~\ref{thm:Eintersectionvide} et le lemme~\ref{lem:Ealignessansinclusion}, on a
\[E(I_{a},I'_{b}) = \frac16 \ell_{a} +\frac16  \ell'_{b} - \frac12 \frac{\ell'_{a}\ell_{ab}}{\ell_{a}}, \ E(I'_{b},I_{ab}\cup I''_{b}) = \frac16\ell_{b}\]
et
\[
E(I_{a},I_{ab}\cup I''_{b}) = \frac16 \ell_{a} + \frac16 (\ell_{ab}+\ell''_{b}) - \frac12\ell_{ab} +\frac16 \frac{\ell_{ab}^3}{\ell_{a}(\ell_{ab}+\ell''_{b})}.\]
D'apr\`es le lemme~\ref{lem:Eunion}, on a 
\begin{align*}
E(I_{a},I_{b}) &= \frac{\ell'_{b}}{\ell_{b}} \, E(I_{a},I'_{b}) + \frac{\ell_{ab}+\ell''_{b}}{\ell_{b}} \, E(I_{a},I_{ab}\cup I''_{b}) - \frac{\ell'_{b}(\ell_{ab}+\ell''_{b})}{\ell_{b}^2} \, E(I'_{b},I_{ab}\cup I''_{b})\\
&= \frac1{6\ell_{b}} \Big(\ell'_{b}\ell_{a}+{\ell'_{b}}^2 -  3\frac{\ell'_{a}\ell'_{b}\ell_{ab}}{\ell_{a}} + (\ell_{ab}+\ell''_{b})\ell_{a} + (\ell_{ab}+\ell''_{b})^2 - 3(\ell_{ab}+\ell''_{b})\ell_{ab} +\frac{\ell_{ab}^3}{\ell_{a}}\\
&\quad - \ell'_{b}(\ell_{ab}+\ell''_{b})\Big)\\
&= \frac1{6\ell_{b}} \Big( \ell_{b}\ell_{a} + \ell_{b}^2 -3\ell_{ab}\ell_{b}- 3\ell'_{b}\ell''_{b} + \frac{\ell_{ab}^3}{\ell_{a}} - 3\frac{\ell'_{a}\ell'_{b}\ell_{ab}}{\ell_{a}}\Big)\\
\end{align*}
\end{proof}

\begin{theo}\label{thm:Ecasgeneral}
Soient $I_{a},I_{b} \in \Seg_{k}$ tels que $I_{a}\cap I_{b} \ne \emptyset$. Alors, avec les notations de la figure~\ref{fig:intersectionnonvide}, on a
\[E(I_{a},I_{b}) = 
\frac16 \ell_{a} + \frac16\ell_{b}-\ell_{ab} +\frac16 \frac{\ell_{ab}^3}{\ell_{a}\ell_{b}} -\frac12 \frac{\ell'_{a}\ell''_{a}}{\ell_{a}} - \frac12 \frac{\ell'_{b}\ell''_{b}}{\ell_{b}} - \frac12 \frac{(\ell'_{a}\ell'_{b}+\ell''_{a}\ell''_{b})\ell_{ab}}{\ell_{a}\ell_{b}}.\] 
\end{theo}
\begin{proof}
D'apr\`es le lemme~\ref{lem:Edepasseduncote} et le th\'eor\`eme~\ref{thm:Eintersectionvide}, on a
\[E(I'_{a}\cup I_{ab},I_{b}) = \frac16 (\ell'_{a}+\ell_{ab}) +\frac16  \ell_{b} - \frac12\ell_{ab} +\frac16 \frac{\ell_{ab}^3}{(\ell'_{a}+\ell_{ab})\ell_{b}} - \frac12\frac{\ell'_{b}\ell''_{b}}{\ell_{b}}- \frac12\frac{\ell'_{a}\ell'_{b}\ell_{ab}}{(\ell'_{a}+\ell_{ab})\ell_{b}},\]
\[E(I''_{a},I_{b}) = \frac16 \ell''_{a} + \frac16 \ell_{b} - \frac12 \frac{(\ell'_{b}+\ell_{ab})\ell''_{b}}{\ell_{b}} \textrm{ et }
E(I'_{a}\cup I_{ab},I''_{a}) = \frac16\ell_{a}.\]
D'apr\`es le lemme~\ref{lem:Eunion}, on a 
\begin{align*}
E(I_{a},I_{b}) &= \frac{\ell'_{a}+\ell_{ab}}{\ell_{a}} \, E((I'_{a}\cup I_{ab}),I_{b}) + \frac{\ell''_{a}}{\ell_{a}} \, E(I''_{a},I_{b}) - \frac{(\ell'_{a}+\ell_{ab})\ell''_{a}}{\ell_{a}^2} \, E(I'_{a}\cup I_{ab}, I''_{a})\\
&= \frac1{6\ell_{a}} \Big( (\ell'_{a}+\ell_{ab})^2 + (\ell'_{a}+\ell_{ab})\ell_{b} - 3(\ell'_{a}+\ell_{ab})\ell_{ab}  + \frac{\ell_{ab}^3}{\ell_{b}}\\ 
&\quad - 3 \frac{(\ell'_{a}+\ell_{ab})\ell'_{b}\ell''_{b}}{\ell_{b}} - 3\frac{\ell'_{a}\ell'_{b}\ell_{ab}}{\ell_{b}}\\
&\quad + {\ell''_{a}}^2 + \ell''_{a}\ell_{b} - 3 \frac{(\ell'_{b}+\ell_{ab})\ell''_{a}\ell''_{b}}{\ell_{b}} - (\ell'_{a}+\ell_{ab})\ell''_{a}\Big)\\
&= \frac1{6\ell_{a}} \Big( \ell_{a}^2 + \ell_{a}\ell_{b} - 3\ell_{a}\ell_{ab} - 3 \ell'_{a}\ell''_{a} + \frac{\ell_{ab}^3}{\ell_{b}}  - 3 \frac{(\ell'_{a}+\ell_{ab})\ell'_{b}\ell''_{b}}{\ell_{b}} - 3\frac{\ell'_{a}\ell'_{b}\ell_{ab}}{\ell_{b}}\\
&\quad - 3 \frac{\ell'_{b}\ell''_{a}\ell''_{b}}{\ell_{b}} - 3 \frac{\ell_{ab}\ell''_{a}\ell''_{b}}{\ell_{b}}\Big).\\
\end{align*}
Le r\'esultat s'en d\'eduit.
\end{proof}

\begin{coro}\label{cor:Eminoration}
Soient $I_{a},I_{b} \in \Seg_{k}$ tels que $I_{a}\cap I_{b} \ne \emptyset$. Fixons les notations de la figure~\ref{fig:intersectionnonvide}. Pour $i\in \{a,b\}$, posons $m_{i} := \frac12 (\ell_{i} - \ell_{ab})$. On a alors 
\begin{align*}
E(I_{a},I_{b}) &\ge \frac16 \frac{m_{a}^2}{\ell_{a}} + \frac16 \frac{m_{b}^2}{\ell_{b}} - \frac13 \ell_{ab}\frac{m_{a}}{\ell_{a}}\frac{m_{b}}{\ell_{b}}\\
& \qquad {\scriptstyle \Big( = \frac16 \Big( \frac{m_{a}}{\sqrt{\ell_{a}}} - \frac{m_{b}}{\sqrt{\ell_{b}}} \Big)^2 + \frac13 (\sqrt{\ell_{a}\ell_{b}} - \ell_{ab}) \frac{m_{a}}{\ell_{a}}\frac{m_{b}}{\ell_{b}}\Big)}\\
&\ge \frac16 \frac{(\ell_{a}-\ell_{b})^2}{\max(\ell_{a},\ell_{b})}.
\end{align*} 
\end{coro}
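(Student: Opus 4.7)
The plan is to start from the explicit formula of Theorem~\ref{thm:Ecasgeneral} and upper-bound its three subtracted correction terms by a joint convexity argument. I introduce symmetric variables $\ell'_i = m_i + s_i$, $\ell''_i = m_i - s_i$ with $\abs{s_i} \le m_i$ for $i \in \{a, b\}$, so that $\ell'_i \ell''_i = m_i^2 - s_i^2$ and $\ell'_a \ell'_b + \ell''_a \ell''_b = 2 m_a m_b + 2 s_a s_b$. Substituted into the formula of Theorem~\ref{thm:Ecasgeneral}, the total correction splits into a purely $m$-dependent piece, namely $\frac{m_a^2}{2\ell_a} + \frac{m_b^2}{2\ell_b} + \frac{\ell_{ab}\, m_a m_b}{\ell_a \ell_b}$, plus a quadratic form
\[
Q(s_a, s_b) \;=\; -\frac{s_a^2}{2\ell_a} - \frac{s_b^2}{2\ell_b} + \frac{\ell_{ab}}{\ell_a \ell_b}\, s_a s_b
\]
in the deviation variables.

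The crux of the argument is that $Q$ is negative semi-definite: its associated matrix has negative diagonal entries $-\frac{1}{2\ell_a}, -\frac{1}{2\ell_b}$, and its determinant equals $\frac{\ell_a \ell_b - \ell_{ab}^2}{4\ell_a^2 \ell_b^2}$, which is non-negative since $\ell_{ab} \le \min(\ell_a, \ell_b) \le \sqrt{\ell_a \ell_b}$. Hence $Q(s_a, s_b) \le Q(0, 0) = 0$, so the total correction is maximised at $s_a = s_b = 0$, i.e.\ when $\ell'_i = \ell''_i = m_i$ for both $i$. Subtracting this maximal correction from the positive terms of Theorem~\ref{thm:Ecasgeneral} and using the identities $\ell_a = 2 m_a + \ell_{ab}$ and $\ell_b = 2 m_b + \ell_{ab}$ to trade $\ell_a, \ell_b$ for $m_a, m_b, \ell_{ab}$, a direct algebraic rearrangement reduces the resulting expression to exactly $\frac{m_a^2}{6\ell_a} + \frac{m_b^2}{6\ell_b} - \frac{\ell_{ab}\, m_a m_b}{3\ell_a \ell_b}$, establishing the first inequality.

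The parenthetical equivalent form is then obtained by completing the square: expanding $\frac{1}{6}\bigl(\tfrac{m_a}{\sqrt{\ell_a}} - \tfrac{m_b}{\sqrt{\ell_b}}\bigr)^2$ produces the two diagonal terms together with a mixed term $-\tfrac{1}{3}\tfrac{m_a m_b}{\sqrt{\ell_a \ell_b}}$, which is cancelled exactly by the contribution $+\tfrac{1}{3}\sqrt{\ell_a \ell_b}\cdot \tfrac{m_a m_b}{\ell_a \ell_b}$ coming from the second summand. Both summands are manifestly non-negative, the second because $\ell_{ab} \le \sqrt{\ell_a \ell_b}$; this already recovers the well-known non-negativity of the mutual energy in this setting.

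For the final minoration in terms of $\max(\ell_a, \ell_b)$, substituting $m_i = (\ell_i - \ell_{ab})/2$ yields the useful factorisation $\frac{m_a}{\sqrt{\ell_a}} - \frac{m_b}{\sqrt{\ell_b}} = \frac{\sqrt{\ell_a} - \sqrt{\ell_b}}{2}\bigl(1 + \frac{\ell_{ab}}{\sqrt{\ell_a \ell_b}}\bigr)$. Combined with $(\ell_a - \ell_b)^2 = (\sqrt{\ell_a} - \sqrt{\ell_b})^2(\sqrt{\ell_a} + \sqrt{\ell_b})^2$, the estimate reduces to comparing $\bigl(1 + \ell_{ab}/\sqrt{\ell_a \ell_b}\bigr)^2$ with $(\sqrt{\ell_a} + \sqrt{\ell_b})^2 / \max(\ell_a, \ell_b)$, any deficit being compensated by the non-negative cross term $\frac{1}{3}(\sqrt{\ell_a \ell_b} - \ell_{ab})\frac{m_a m_b}{\ell_a \ell_b}$. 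I expect this balancing between the square and the cross term to be the main technical hurdle: when $\ell_{ab} = \min(\ell_a, \ell_b)$ (one segment contained in the other) the square alone saturates the bound with $m_b = 0$, whereas for smaller $\ell_{ab}$ the cross term must be shown to absorb the resulting loss.
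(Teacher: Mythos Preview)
Your argument for the first inequality is essentially the paper's: the substitution $\ell'_i=m_i+s_i$, $\ell''_i=m_i-s_i$ is exactly what is done there (with the letter~$u_i$ in place of your~$s_i$), and your negative semi-definiteness of~$Q$ is the matrix phrasing of the paper's inequality $2\ell_{ab}u_au_b\le \ell_b u_a^2+\ell_a u_b^2$. The algebraic reduction to $\frac{m_a^2}{6\ell_a}+\frac{m_b^2}{6\ell_b}-\frac{\ell_{ab}m_am_b}{3\ell_a\ell_b}$ is carried out in detail in the paper and goes through as you say; the parenthetical identity is a routine completion of the square.

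Where you have a genuine gap is the final minoration. You correctly record the factorisation $\frac{m_a}{\sqrt{\ell_a}}-\frac{m_b}{\sqrt{\ell_b}}=\tfrac12(\sqrt{\ell_a}-\sqrt{\ell_b})\bigl(1+\tfrac{\ell_{ab}}{\sqrt{\ell_a\ell_b}}\bigr)$, but then you stop at ``I expect this balancing\dots\ to be the main technical hurdle'' without actually proving that the cross term absorbs the deficit. As written this is not a proof, only a plan whose hardest step is missing. The paper avoids this balancing entirely by a much simpler monotonicity argument: with $\ell_a,\ell_b$ fixed, one writes
\[
3\ell_a\ell_b E_0 \;=\; -2\ell_{ab}^3+3(\ell_a+\ell_b)\ell_{ab}^2-6\ell_a\ell_b\,\ell_{ab}+\ell_a\ell_b(\ell_a+\ell_b)\;=:\;P(\ell_{ab}),
\]
computes $P'(\ell_{ab})=-6(\ell_{ab}-\ell_a)(\ell_{ab}-\ell_b)\le 0$ on $[0,\min(\ell_a,\ell_b)]$, and therefore minimises at $\ell_{ab}=\min(\ell_a,\ell_b)$, where $P$ evaluates to $\min(\ell_a,\ell_b)(\ell_a-\ell_b)^2$. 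This gives the bound directly and bypasses the square-versus-cross-term comparison you were anticipating.
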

\begin{proof}
Pour $i\in \{a,b\}$, posons 
$u_{i} := \ell'_{i} - m_{i}$. On a alors $\ell'_{i} = m_{i} + u_{i}$, $\ell''_{i} = m_{i} - u_{i}$. Posons 
\[ A := \ell_{b} \ell'_{a}\ell''_{a} + \ell_{a}\ell'_{b}\ell''_{b} + \ell_{ab}(\ell'_{a}\ell'_{b} + \ell''_{a}\ell''_{b}).\]
On a alors 
\begin{align*}
A & = \ell_{b}(m_{a}^2-u_{a}^2) + \ell_{a}(m_{b}^2-u_{b}^2) + \ell_{ab}\big((m_{a}+u_{a})(m_{b}+u_{b}) + (m_{a}-u_{a})(m_{b}-u_{b})\big)\\
&= \ell_{b}m_{a}^2+\ell_{a}m_{b}^2+2 \ell_{ab} m_{a}m_{b} + 2 \ell_{ab} u_{a}u_{b} - \ell_{b}u_{a}^2-\ell_{a}u_{b}^2.
\end{align*}
Or, on a $\ell_{ab} \le \ell_{a}$ et $\ell_{ab} \le \ell_{b}$, d'o\`u
\[2 \ell_{ab} u_{a}u_{b} - \ell_{b}u_{a}^2-\ell_{a}u_{b}^2 \le 2 \sqrt{\ell_{a}}u_{a}\sqrt{\ell_{b}}u_{b} - (\sqrt{\ell_{a}} u_{a})^2 - (\sqrt{\ell_{b}} u_{b})^2 \le 0.\]

Fixons dor\'enavant $\ell_{a}$, $\ell_{b}$ et~$\ell_{ab}$, et donc $m_{a}$ et $m_{b}$. La quantit\'e~$A$ est maximale lorsque $u_{a}=u_{b}=0$ et vaut alors 
\[A_{0}:=\ell_{b}m_{a}^2+\ell_{a}m_{b}^2+2 \ell_{ab} m_{a}m_{b}.\]
D'apr\`es le th\'eor\`eme~\ref{thm:Ecasgeneral}, la quantit\'e $E(I_{a},I_{b})$ est donc minimale lorsque $u_{a}=u_{b}=0$ et vaut alors 
\begin{align*}
E_{0} &:= \frac16 \ell_{a} + \frac16 \ell_{b} - \frac12\ell_{ab} + \frac16 \frac{\ell_{ab}^3}{\ell_{a}\ell_{b}} - \frac12\frac{A_{0}}{\ell_{a}\ell_{b}}\\
&=  \frac13 m_{a} + \frac13 m_{b} - \frac16 \ell_{ab} + \frac16 \frac{\ell_{ab}^3}{\ell_{a}\ell_{b}} - 
\frac12\frac{m_{a}^2}{\ell_{a}} - \frac12\frac{m_{b}^2}{\ell_{b}} -  \ell_{ab} \frac{m_{a}m_{b}}{\ell_{a}\ell_{b}}.
\end{align*}
Or, on a 
\begin{align*}
\frac{\ell_{ab}^3}{\ell_{a}\ell_{b}}-\ell_{ab} &=\ell_{ab} \Big(\frac{\ell_{ab}^2}{\ell_{a}\ell_{b}}-1\Big)\\
&= \ell_{ab} \Big( \frac{(\ell_{a}-2m_{a})(\ell_{b}-2m_{b})}{\ell_{a}\ell_{b}}-1\Big)\\
&= 4\ell_{ab}\frac{m_{a}m_{b}}{\ell_{a}\ell_{b}} - 2\ell_{ab}\frac{m_{a}}{\ell_{a}}- 2\ell_{ab}\frac{m_{b}}{\ell_{b}}\\
&= 4\ell_{ab}\frac{m_{a}m_{b}}{\ell_{a}\ell_{b}} - 2(\ell_{a}-2m_{a})\frac{m_{a}}{\ell_{a}}- 2(\ell_{b}-2m_{b})\frac{m_{b}}{\ell_{b}}\\
&= 4\ell_{ab}\frac{m_{a}m_{b}}{\ell_{a}\ell_{b}} - 2m_{a} +4 \frac{m_{a}^2}{\ell_{a}}- 2m_{b} +4\frac{m_{b}^2}{\ell_{b}},
\end{align*}
d'o\`u
\[
E_{0} = \frac16 \frac{m_{a}^2}{\ell_{a}} + \frac16 \frac{m_{b}^2}{\ell_{b}} - \frac13 \ell_{ab}\frac{m_{a}}{\ell_{a}}\frac{m_{b}}{\ell_{b}}.
\]

\medbreak

Pour d\'emontrer la derni\`ere \'egalit\'e, exprimons~$E_{0}$ \`a l'aide de~$\ell_{a}$, $\ell_{b}$ et~$\ell_{ab}$~:
\begin{align*}
3\ell_{a}\ell_{b}E_{0} &= \ell_{b}(\ell_{a}-\ell_{ab})^2+\ell_{a}(\ell_{b}-\ell_{ab})^2 - 2\ell_{ab}(\ell_{a}-\ell_{ab})(\ell_{b}-\ell_{ab})\\
&= -2 \ell_{ab}^3 + 3(\ell_{a}+\ell_{b})\ell_{ab}^2 - 6\ell_{a}\ell_{b}\ell_{ab} +\ell_{a}\ell_{b}(\ell_{a}+\ell_{b}).
\end{align*}
Fixons $\ell_{a}$ et~$\ell_{b}$. L'expression pr\'ec\'edente est un polyn\^ome~$P$ en~$\ell_{ab}$, de d\'eriv\'ee
\[P' = -6\ell_{ab}^2+6(\ell_{a}+\ell_{b})\ell_{ab} -6\ell_{a}\ell_{b} = -6(\ell_{ab}-\ell_{a})(\ell_{ab}-\ell_{b}).\]
En particulier~$P$ est d\'ecroissant sur le segment $[0,\min(\ell_{a},\ell_{b})]$. Puisque~$\ell_{ab}$ appartient \`a ce segment, on en d\'eduit que
\[6\ell_{a}\ell_{b}E_{0} \ge P(\min(\ell_{a},\ell_{b})) = \min(\ell_{a},\ell_{b})\, (\ell_{a}-\ell_{b})^2.\]
Le r\'esultat s'ensuit.
\end{proof}

\begin{coro}\label{cor:Eqrho}
Soient $I_{a},I_{b} \in \Seg_{k}$ tels que $I_{a}\cap I_{b} \ne \emptyset$. Fixons les notations de la figure~\ref{fig:intersectionnonvide}. Soit $\lambda \in \intff{0,\max(\ell_{a}-\ell_{ab},\ell_{b}-\ell_{ab})}$. Soit $\varrho\in \R_{\ge 0}$ tel que $\max(\ell_{a},\ell_{b}) \le \varrho \lambda$. Alors, on a
\[E(I_{a},I_{b}) \ge \frac{\lambda}{48\varrho^2}.\] 
\end{coro}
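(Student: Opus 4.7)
Le cas $\lambda=0$ est imm\'ediat, aussi supposerai-je $\lambda>0$. Le point de d\'epart sera la premi\`ere minoration du corollaire~\ref{cor:Eminoration},
\[ E(I_{a},I_{b}) \ge \frac{\ell_{b}m_{a}^2 + \ell_{a}m_{b}^2 - 2\ell_{ab}m_{a}m_{b}}{6\ell_{a}\ell_{b}},\]
et l'id\'ee directrice consistera \`a traiter le num\'erateur comme un trin\^ome du second degr\'e en la variable~$m_{b}$, afin d'absorber le signe g\^enant du terme crois\'e. Son discriminant $4m_{a}^2(\ell_{ab}^2-\ell_{a}\ell_{b})$ est n\'egatif, puisque $\ell_{ab}\le\min(\ell_{a},\ell_{b})$, de sorte que le trin\^ome est minor\'e, uniform\'ement en~$m_{b}$, par son minimum $m_{a}^2(\ell_{a}\ell_{b}-\ell_{ab}^2)/\ell_{a}$. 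Cela conduira \`a l'estimation interm\'ediaire
\[E(I_{a},I_{b}) \ge \frac{m_{a}^2(\ell_{a}\ell_{b}-\ell_{ab}^2)}{6\ell_{a}^2\ell_{b}},\]
valable sans aucune condition suppl\'ementaire.

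Pour tirer parti de l'hypoth\`ese sur~$\lambda$, j'\'echangerai au besoin les r\^oles de~$a$ et~$b$, op\'eration qui laisse l'\'enonc\'e et la minoration ci-dessus invariants par sym\'etrie, de sorte que $m_{a}\ge m_{b}$. L'hypoth\`ese $\lambda\le\max(\ell_{a}-\ell_{ab},\ell_{b}-\ell_{ab})=2m_{a}$ s'\'ecrira alors $m_{a}\ge\lambda/2$. L'in\'egalit\'e $\ell_{ab}\le\ell_{b}$ fournira ensuite $\ell_{a}\ell_{b}-\ell_{ab}^2\ge\ell_{b}(\ell_{a}-\ell_{ab})=2m_{a}\ell_{b}$, ce qui r\'eduira la minoration pr\'ec\'edente \`a $E(I_{a},I_{b})\ge m_{a}^3/(3\ell_{a}^2)$. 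Il ne restera qu'\`a reporter $m_{a}\ge\lambda/2$ et $\ell_{a}\le\max(\ell_{a},\ell_{b})\le\varrho\lambda$ pour conclure~:
\[E(I_{a},I_{b})\ge\frac{(\lambda/2)^3}{3(\varrho\lambda)^2} = \frac{\lambda}{24\varrho^2}\ge\frac{\lambda}{48\varrho^2}.\]

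La d\'emonstration demeure ainsi enti\`erement \'el\'ementaire et ne pr\'esente aucun passage r\'eellement d\'elicat~: le seul ingr\'edient conceptuel r\'eside dans le recours au discriminant du trin\^ome en~$m_{b}$, qui permet d'absorber d'un seul geste le terme crois\'e n\'egatif figurant dans la minoration issue du corollaire~\ref{cor:Eminoration}, et \'evite toute discussion de sous-cas selon la position relative des longueurs.
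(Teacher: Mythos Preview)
Your proof is correct and actually yields the sharper bound $E(I_a,I_b)\ge \lambda/(24\varrho^2)$. Your route differs from the paper's in a genuine way. The paper splits into two cases according to whether $m_b\ge \tfrac12 m_a$ or $m_b\le \tfrac12 m_a$: in the first case it uses the rewriting
\[
E(I_a,I_b)\ge \tfrac16\Big(\tfrac{m_a}{\sqrt{\ell_a}}-\tfrac{m_b}{\sqrt{\ell_b}}\Big)^2+\tfrac13(\sqrt{\ell_a\ell_b}-\ell_{ab})\,\tfrac{m_a}{\ell_a}\tfrac{m_b}{\ell_b}
\]
from Corollary~\ref{cor:Eminoration} and bounds the second term; in the second case it appeals to the final inequality $E\ge \tfrac16(\ell_a-\ell_b)^2/\max(\ell_a,\ell_b)$. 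You instead treat the numerator $\ell_a m_b^2-2\ell_{ab}m_a m_b+\ell_b m_a^2$ directly as a quadratic in~$m_b$ and minimize it once and for all, which collapses the two cases into a single estimate $E\ge m_a^3/(3\ell_a^2)$. This is cleaner and avoids the dichotomy; the paper's approach, on the other hand, exhibits more explicitly which part of the geometry (the common length $\sqrt{\ell_a\ell_b}-\ell_{ab}$ versus the length gap $|\ell_a-\ell_b|$) is driving the lower bound in each regime.

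One small wording quibble: when you write that the swap ``laisse \dots\ la minoration ci-dessus invariants par sym\'etrie'', the intermediate bound $m_a^2(\ell_a\ell_b-\ell_{ab}^2)/(6\ell_a^2\ell_b)$ is not literally symmetric in $a,b$; what is true is that the \emph{derivation} was symmetric, so the same formula holds with the new labels after the swap. The mathematics is fine, but the sentence could be tightened.
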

\begin{proof}
Pour $i\in \{a,b\}$, posons $m_{i} := \frac12 (\ell_{i} - \ell_{ab})$. Quitte \`a \'echanger $I_{a}$ et $I_{b}$, on peut supposer que $\ell_{a}\ge \ell_{b}$, et donc $m_{a}\ge m_{b}$.

$\bullet$ Supposons que $m_{b} \ge \frac12\, m_{a}$.

On a alors $\min(\ell_{a},\ell_{b})=\ell_{b} = \ell_{ab} +2m_{b} \ge \ell_{ab}+ \frac12\, \lambda$, d'o\`u $\sqrt{\ell_{a}\ell_{b}}-\ell_{ab} \ge \frac12\, \lambda$. D'apr\`es le corollaire~\ref{cor:Eminoration}, on a donc
\[E(I_{a},I_{b}) \ge   \frac16 \, \lambda\,  \frac{m_{a}}{\ell_{a}}\frac{m_{b}}{\ell_{b}} \ge \frac{\lambda}{48\varrho^2}.\]

\medbreak

$\bullet$ Supposons que $m_{b} \le \frac12\, m_{a}$.

On a alors $\abs{\ell_{a}-\ell_{b}} = \ell_{a}-\ell_{b} = 2m_{a}-2m_{b} \ge m_{a}\ge \frac12\, \lambda$. D'apr\`es le corollaire~\ref{cor:Eminoration}, on a donc
\[E(I_{a},I_{b}) \ge \frac1{24} \frac{\lambda^2}{\ell_{a}} \ge \frac{\lambda}{24\varrho}  \ge \frac{\lambda}{48\varrho^2}.\]
\end{proof}

\section{Morphismes de Latt\`es}\label{sec:Lattes}

Dans cette section, nous pr\'esentons les mesures dont nous allons calculer les \'energies mutuelles. Ils s'agit de mesures d'\'equilibre associ\'ees \`a des morphismes de Latt\`es, provenant de la multiplication par~2 sur des courbes elliptiques. \JP{ajouter des refs ?}

\subsection{Projections standards}\label{sec:projectionstandard}

Soit~$K$ un corps et soit~$E$ une courbe elliptique sur~$K$.

\begin{defi}
On appelle \emph{projection standard} sur~$E$ tout morphisme $\pi \colon E \to \P^1_{K}$ qui fait commuter un diagramme de la forme
\[\begin{tikzcd}
E \ar[d] \ar[r, "\pi"] & \P^1_{K},\\
E/\{\pm1\} \ar[ur, "\sim"]
\end{tikzcd}\]
o\`u $E \to E/\{\pm1\}$ est la projection canonique et $E/\{\pm1\} \xrightarrow[]{\sim} \P^1_{K}$ est un isomorphisme.
\end{defi}

Une telle projection standard sur~$E$ est un morphisme de degr\'e~2, ramifi\'e en l'ensemble~$E[2]$ des points de 2-torsion de~$E$.

\medbreak

R\'eciproquement, soit~$R$ un ensemble de quatre points distincts de~$\P^1(K)$. Soit $\pi_{R} \colon E_{R} \to \P^1_{K}$ un rev\^etement de degr\'e~2 ramifi\'e en les points de~$R$. En choisissant un point rationnel au-dessus de l'un des points de~$R$, $E_{R}$ acquiert une structure de courbe elliptique. Le morphisme~$\pi_{R}$ est alors un morphisme standard et l'on a 
\[ \pi_{R}(E_{R}[2]) = R.\]

\medbreak

Dans la suite de ce texte, nous passerons donc librement de couples $(E,\pi)$, o\`u $E$ est une courbe elliptique et $\pi$ une projection standard sur~$E$, \`a des quadruplets de points distincts de la droite projective, et r\'eciproqument.

\subsection{Sur un corps valu\'e complet}\label{sec:Lattescorps}

Soit $k$ un corps valu\'e complet de caract\'eristique diff\'erente de~2. 

Soit $\gamma=(\gamma_{1},\gamma_{2},\gamma_{3},\gamma_{4})$ un quadruplet de points distincts de~$\P^1(k)$. Par la construction de la section~\ref{sec:projectionstandard}, on lui associe une courbe elliptique~$E_{\gamma}$ et une projection standard $\pi_{\gamma} \colon E_{\gamma} \to \P^1_{k}$.
Notons~$L_{\gamma}$ le morphisme de Latt\`es associ\'e, c'est-\`a-dire l'unique endomorphisme de~$\P^1_{k}$ qui fait commuter le diagramme
\[\begin{tikzcd}
E_{\gamma} \ar[r, "{[}2{]}"] \ar[d, "\pi_{\gamma}"]& E_{\gamma} \ar[d, "\pi_{\gamma}"]\\
\P^1_{k} \ar[r, "L_{\gamma}"] & \P^1_{k}
\end{tikzcd}.\]
Lorsque $\gamma_{1}=\infty$, $\gamma_{2}=0$ et~$\gamma_{3}=1$, on peut d\'ecrire la courbe elliptique~$E_{\gamma}$ par l'\'equation de Legendre
\[ E_{\gamma} \colon y^2 = x(x-1)(x-\gamma_{4})\]
et le morphisme de Latt\`es~$L_{\gamma}$ par la formule 
\[ L_{\gamma} \colon t\in \P^1_{k} \mapstoo \frac{(t^2-\gamma_{4})^2}{4t(t-1)(t-\gamma_{4})} \in \P^1_{k}.\]

\begin{nota}\label{nota:Lattes}
On note $\mu_{\gamma}$ la mesure d'\'equilibre sur~$\EP{1}{k}$ associ\'ee au morphisme de Latt\`es~$L_{\gamma}^\an$. Elle ne d\'epend que de l'ensemble $\{\gamma_{1},\gamma_{2},\gamma_{3},\gamma_{4}\}$. On la note parfois \'egalement $\mu_{(E_{\gamma},\pi_{\gamma})}$.

Cette mesure poss\`ede un potentiel continu au sens de la d\'efinition~\ref{def:mesureadelique}, i).
\end{nota}

La notation pr\'esent\'ee ici conduit \`a identifier la mesure~$\mu_{\gamma}$ \`a son image r\'eciproque sur~$\EP{1}{k'}$ pour toute extension valu\'ee compl\`ete~$k'$ de~$k$. Aucun probl\`eme n'en r\'esulte (\cf~remarque~\ref{rem:energieL}).

\medbreak

Dans \cite[proposition~5.1]{FRLErgodique}, Ch.~Favre et J.~Rivera--Letelier d\'ecrivent explicitement la mesure~$\mu_{\gamma}$ sur un corps ultram\'etrique lorsque $\gamma_{1}=\infty$, $\gamma_{2}=0$ et~$\gamma_{3}=1$ (voir aussi~\cite[proposition~3.2]{DKY}). On peut toujours se ramener \`a ce cas par une homographie, et donc d\'ecrire~$\mu_{\gamma}$ en g\'en\'eral.

\begin{nota}\label{nota:Igamma}
Supposons que~$k$ est ultram\'etrique. Il existe une permutation~$\sigma$ de~$\{1,2,3,4\}$ telle que $\intff{\gamma_{\sigma(1)}, \gamma_{\sigma(2)}} \cap \intff{\gamma_{\sigma(3)}, \gamma_{\sigma(4)}} \ne \emptyset$ dans~$\EP{1}{K}$. On pose
\[ I_{\gamma} := \intff{\gamma_{\sigma(1)}, \gamma_{\sigma(2)}} \cap \intff{\gamma_{\sigma(3)}, \gamma_{\sigma(4)}}.\]
C'est un segment de~$\EP{1}{k}$ form\'e de points de type~2 ou~3. Il est ind\'ependant du choix de~$\sigma$.
\end{nota}

\begin{exem}\label{ex:Igamma01infini}
Lorsque $\gamma_{1}=\infty$, $\gamma_{2}=0$, $\gamma_{3}=1$ et $\abs{\gamma_{4}}>1$, on a
\[I_{\gamma} = \intff{\eta_{1},\eta_{\abs{\gamma_{4}}}}.\]
\end{exem}

\begin{prop}\label{prop:mugamma}
Supposons que~$k$ est ultram\'etrique et de caract\'eristique r\'esiduelle diff\'erente de~2. Alors, avec la notation~\ref{nota:muI}, on a
\[\mu_{\gamma} = \mu_{I_{\gamma}}.\]
En d'autres termes, la mesure~$\mu_{\gamma}$ est la mesure de masse totale~1 proportionnelle \`a la mesure de Lebesgue sur le segment~$I_{\gamma}$ s'il est non trivial, ou la mesure de Dirac support\'ee par~$I_{\gamma}$. 
\qed
\end{prop}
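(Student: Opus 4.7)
La stratégie consiste à se ramener au cas normalisé $\gamma_{1}=\infty,\gamma_{2}=0,\gamma_{3}=1$ déjà traité dans \cite[proposition~5.1]{FRLErgodique} (dont la formulation est rappelée par l'auteur après la notation~\ref{nota:Lattes}), puis à transporter le résultat par une homographie. La clé est la covariance simultanée des deux membres de l'égalité $\mu_{\gamma}=\mu_{I_{\gamma}}$ sous l'action de~$\PGL_{2}(k)$.

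Plus précisément, étant donné $\gamma$, on choisit $\varphi\in\PGL_{2}(k)$ envoyant $(\gamma_{1},\gamma_{2},\gamma_{3})$ sur $(\infty,0,1)$ et on pose $t:=\varphi(\gamma_{4})$. D'un côté, la courbe elliptique $E_{\gamma}$ est isomorphe à~$E_{\varphi(\gamma)}$ et le morphisme de Lattès vérifie $L_{\varphi(\gamma)}=\varphi\circ L_{\gamma}\circ\varphi^{-1}$ sur~$\EP{1}{k}$; la caractérisation variationnelle (ou la relation ${L_{\gamma}}^{*}\mu_{\gamma}=4\mu_{\gamma}$) de la mesure d'équilibre entraîne alors que $\mu_{\varphi(\gamma)}=\varphi_{*}\mu_{\gamma}$. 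De l'autre côté, l'homographie~$\varphi$ agit comme une isométrie par morceaux sur l'arbre $\EP{1}{k}$ : elle envoie géodésiques sur géodésiques et préserve la longueur canonique utilisée pour définir $\mu_{I}$ dans la notation~\ref{nota:muI}. En particulier, $\varphi(I_{\gamma})=I_{\varphi(\gamma)}$ et $\varphi_{*}\mu_{I_{\gamma}}=\mu_{I_{\varphi(\gamma)}}$. Il suffit donc de démontrer la proposition pour $\gamma=(\infty,0,1,t)$, où la description explicite de~$\mu_{\gamma}$ dans~\cite{FRLErgodique} et l'exemple~\ref{ex:Igamma01infini} (après passage éventuel à une configuration où $\abs{t}>1$ via une homographie fixant $\{0,1,\infty\}$, ou via une permutation des points) donnent directement $\mu_{\gamma}=\mu_{I_{\gamma}}$.

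Le seul point véritablement à vérifier est que le segment $I_{\gamma}$ introduit en notation~\ref{nota:Igamma} est lui-même indépendant du choix de la permutation~$\sigma$ utilisée pour l'écrire comme intersection : c'est une propriété combinatoire de l'arbre $\EP{1}{k}$ (quatre feuilles distinctes dans un arbre déterminent une unique configuration, soit un segment central, soit un point central) qui se vérifie sans difficulté. Une fois ceci établi, la covariance par~$\PGL_{2}(k)$ rend la réduction au cas normalisé immédiate, et l'invariance par changement de base valué complet (déjà commentée en notation~\ref{nota:Lattes}) permet au passage de supposer~$k$ algébriquement clos si nécessaire. L'obstacle principal n'est donc pas conceptuel mais seulement de nature combinatoire : traiter uniformément les différentes positions relatives du quatrième point $t$ par rapport à $\{0,1,\infty\}$ selon que $\abs{t}>1$, $\abs{t-1}<1$, etc., ce qui s'effectue en remplaçant au besoin~$\varphi$ par sa composée avec une homographie permutant $\{0,1,\infty,t\}$.
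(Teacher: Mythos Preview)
Your proposal is correct and follows exactly the approach indicated in the paper: the paper gives no detailed proof (the proposition is closed by a bare \texttt{\textbackslash qed}) but simply remarks, just before the statement, that the case $\gamma_{1}=\infty$, $\gamma_{2}=0$, $\gamma_{3}=1$ is treated in \cite[proposition~5.1]{FRLErgodique} and that the general case reduces to it by an homography. Your write-up makes this reduction explicit (covariance of the equilibrium measure and of the segment~$I_{\gamma}$ under~$\PGL_{2}(k)$, independence of~$I_{\gamma}$ from the choice of~$\sigma$), which is precisely what the paper leaves to the reader.
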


\begin{coro}\label{cor:mugammabonnered}
Supposons que 
\[\forall i \ne j,\ \abs{\gamma_{i}} = \abs{\gamma_{j}} = \abs{\gamma_{i}-\gamma_{j}}.\]
Alors, on a $I_{\gamma} = \{\eta_{0,1}\}$ et $\mu_{\gamma} = \chi_{0,1}$.
\qed
\end{coro}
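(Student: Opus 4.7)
The plan is to identify the segment~$I_\gamma$ of Notation~\ref{nota:Igamma} explicitly as the singleton~$\{\eta_{0,1}\}$, then read off the measure via Proposition~\ref{prop:mugamma}. The hypothesis tells us that the four points~$\gamma_i$ share a common absolute value~$r$ and that $|\gamma_i-\gamma_j|=r$ for all $i\ne j$; for the conclusion $I_\gamma=\{\eta_{0,1}\}$ to make sense the implicit normalization is $r=1$, so in what follows I take $|\gamma_i|=1$ for every~$i$ and $|\gamma_i-\gamma_j|=1$ for $i\ne j$. In particular, the four reductions $\bar\gamma_1,\dots,\bar\gamma_4$ in the residue field of~$k$ are pairwise distinct and all nonzero, i.e.\ we are in the classical ``good reduction'' situation for a quadruple of points on the projective line.

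The computation then takes place entirely on the Berkovich line. For any pair $i\ne j$, the inequality $|\gamma_i-\gamma_j|\le 1$ ensures that the closed disks of radius~$1$ around~$\gamma_i$ and~$\gamma_j$ both coincide with the closed unit disk centered at~$0$, so $\eta_{\gamma_i,1}=\eta_{\gamma_j,1}=\eta_{0,1}$. The unique injective path from~$\gamma_i$ to~$\gamma_j$ in~$\EP{1}{k}$ therefore climbs along $r\mapsto\eta_{\gamma_i,r}$ for $r\in[0,1]$, reaches~$\eta_{0,1}$, and comes back down to~$\gamma_j$. Consequently every one of the six segments $[\gamma_i,\gamma_j]$ passes through the Gauss point~$\eta_{0,1}$. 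Moreover, the tangent direction at~$\eta_{0,1}$ pointing towards~$\gamma_i$ corresponds to the reduction~$\bar\gamma_i$ in the residue field, and the four residues being pairwise distinct translates to these four tangent directions being pairwise distinct.

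It follows that for any permutation~$\sigma$ of $\{1,2,3,4\}$, the segments $[\gamma_{\sigma(1)},\gamma_{\sigma(2)}]$ and $[\gamma_{\sigma(3)},\gamma_{\sigma(4)}]$ both contain~$\eta_{0,1}$ but leave it along disjoint pairs of tangent directions, so their intersection reduces to exactly~$\{\eta_{0,1}\}$. Hence $I_\gamma=\{\eta_{0,1}\}$, and Proposition~\ref{prop:mugamma} combined with Notation~\ref{nota:muI},~i) yields $\mu_\gamma=\mu_{I_\gamma}=\chi_{0,1}$. The only nontrivial ingredient is the identification of tangent directions at the Gauss point with residues in the residue field of~$k$, but this is standard material on~$\EP{1}{k}$, so no serious obstacle is expected.
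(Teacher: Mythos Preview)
Your argument is correct and is exactly the unwinding the paper has in mind: the corollary carries a bare \qed{} in the paper, i.e.\ it is regarded as immediate from Proposition~\ref{prop:mugamma} and Notation~\ref{nota:Igamma}, and your computation of $I_\gamma$ via the tree structure of $\EP{1}{k}$ (common Gauss point, four distinct tangent directions) is precisely the verification that makes this immediate. Your remark that the hypothesis implicitly forces the common value $r$ to be~$1$ is also well taken; the corollary is only ever applied (in Notation~\ref{nota:muQ}) at places of good reduction where this holds automatically.
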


Il est utile de remarquer que la longueur de~$I_{\gamma}$ peut \^etre calcul\'ee \`a l'aide d'un \emph{birapport}. Nous noterons 
\[ [\gamma_{1},\gamma_{2},\gamma_{3},\gamma_{4}] := \frac{(\gamma_{3}-\gamma_{1})(\gamma_{4}-\gamma_{2})}{(\gamma_{3}-\gamma_{2})(\gamma_{4}-\gamma_{1})} \in K\setminus\{0,1\}\]
le birapport des points~$\gamma_{1},\gamma_{2},\gamma_{3},\gamma_{4}$ (avec une g\'en\'eralisation convenable de l'expression lorsque l'un des points est \`a l'infini).

Rappelons que, lorsque l'on modifie l'ordre des points $\gamma_{1},\gamma_{2},\gamma_{3},\gamma_{4}$, le birapport~$\beta$ peut prendre n'importe laquelle des six valeurs
\[\beta,  \frac1\beta, 1-\beta, \frac1{1-\beta}, \frac{\beta-1}{\beta}, \frac{\beta}{\beta-1}.\]

\begin{lemm}\label{lem:longueurIgamma}
La longueur de~$I_{\gamma}$ est \'egale \`a 
\[ \ell(I_{\gamma}) =  \max_{\sigma \in S_{4}} (\abs{[\gamma_{\sigma(1)},\gamma_{\sigma(2)},\gamma_{\sigma(3)},\gamma_{\sigma(4)}]}).\]
\end{lemm}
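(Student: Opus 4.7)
The plan is to reduce the problem to a normal form via a Möbius transformation and then to compute both sides directly. Both quantities are invariant under the diagonal action of $\PGL_2(k)$ on the quadruple $(\gamma_1,\ldots,\gamma_4)$: the right-hand side because the birapport is a classical projective invariant, and the left-hand side because $\PGL_2(k)$ acts by isometries on the Berkovich tree (restricted to the type $2/3$ points equipped with the canonical logarithmic metric), hence sends segments to segments of the same length and commutes with intersection, so carries $I_\gamma$ to $I_{\phi(\gamma)}$. Using triple transitivity of $\PGL_2(k)$ on $\P^1(k)$, I would normalize $\gamma_1 = \infty$, $\gamma_2 = 0$, $\gamma_3 = 1$, $\gamma_4 = \lambda$ with $\lambda \in k\setminus\{0,1\}$. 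Then $[\gamma_1,\gamma_2,\gamma_3,\gamma_4]=\lambda$, and the six values of the birapport over $\sigma\in S_4$ are the anharmonic orbit $\lambda,\ 1/\lambda,\ 1-\lambda,\ 1/(1-\lambda),\ (\lambda-1)/\lambda,\ \lambda/(\lambda-1)$ already recorded in the text.

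It then remains to identify $I_\gamma$ in this normal form (with the formula understood as $\ell(I_\gamma) = \log\max_\sigma |[\gamma_{\sigma(1)},\ldots,\gamma_{\sigma(4)}]|$) by describing the convex hull of $\{\infty, 0, 1, \lambda\}$ in $\EP{1}{k}$. The relevant joins are $\eta_{0,|\lambda|}$, $\eta_{0,1}$ and $\eta_{1,|\lambda-1|}$; the segment $I_\gamma$ is the unique edge joining the two distinct trivalent vertices of the convex hull, or a single point if these coincide. A short case analysis according to the position of $\lambda$ finishes the argument:
\begin{itemize}
\item if $|\lambda|>1$, one is exactly in Example~\ref{ex:Igamma01infini}, so $\ell(I_\gamma)=\log|\lambda|$ and direct inspection gives $\max_\sigma|[\cdots]|=|\lambda|$;
\item if $|\lambda|<1$, symmetrically $\ell(I_\gamma)=\log(1/|\lambda|)$, achieved by the birapport $1/\lambda$;
\item if $|\lambda|=1$ and $|\lambda-1|<1$, one gets $\ell(I_\gamma)=\log(1/|\lambda-1|)$, achieved by $1/(1-\lambda)$;
\item if $|\lambda|=|\lambda-1|=1$, the four points share a single branch point $\eta_{0,1}$, so $I_\gamma=\{\eta_{0,1}\}$ has length $0$ and all six birapports have absolute value $1$.
\end{itemize}

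The hard part is not any one computation but rather uniformizing the case analysis. The unifying observation is the elementary ultrametric fact that for $u+v+w=0$ with $u,v,w\in k$, the two largest among $|u|,|v|,|w|$ are equal; applied to the three differences among $\{0,1,\lambda\}$, it forces the six birapport values to partition into two of absolute value $\exp(\ell(I_\gamma))$, two of the reciprocal value, and two of absolute value $1$. Taking the maximum yields $\max_\sigma|[\gamma_{\sigma(1)},\ldots,\gamma_{\sigma(4)}]|=\exp(\ell(I_\gamma))$, which is the claimed formula.
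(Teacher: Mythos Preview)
Your proof is correct and follows essentially the same route as the paper: reduce to the normal form $(\infty,0,1,\lambda)$ via the $\PGL_2$-invariance of both sides, then compute. The paper streamlines your case analysis by additionally exploiting the permutation symmetry of the formula to arrange $|\lambda|>1$ from the outset (the degenerate case $|\lambda|=|\lambda-1|=1$ being trivial), so that only your first bullet is needed.
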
 
\begin{proof}
En utilisant l'invariance de la formule \`a d\'emontrer par permutation des~$\gamma_{i}$ et l'invariance du birapport par homographie, on se ram\`ene au cas o\`u $\gamma_{1}=\infty$, $\gamma_{2}=0$, $\gamma_{3}=1$ et $\abs{\gamma_{4}}>1$. Un calcul direct bas\'e sur l'exemple~\ref{ex:Igamma01infini} montre alors que 
\[\ell(I_{\gamma})=\abs{\gamma_{4}} =  \max_{\sigma \in S_{4}} (\abs{[\gamma_{\sigma(1)},\gamma_{\sigma(2)},\gamma_{\sigma(3)},\gamma_{\sigma(4)}]}).\] 
\end{proof}

\begin{rema}\label{rem:mugammaC}
On peut \'egalement d\'ecrire la mesure~$\mu_{\gamma}$ dans le cas archim\'edien. Lorsque $k=\C$, c'est l'image de la mesure de Haar sur~$E_{\gamma}(\C)$ par le morphisme $E_{\gamma}(\C) \to \P^1(\C)$. Cette mesure pr\'esente une densit\'e plus importante au voisinage des points de ramification, ce qui permet de retrouver~$\gamma$. En particulier, \`a la diff\'erence de la situation ultram\'etrique, deux quadruplets distincts donnent lieu \`a deux mesures distinctes. \JP{ref ?} 
\end{rema}

Introduisons \'egalement une notation pour les mesures ad\'eliques form\'ees \`a partir des mesures pr\'ec\'edentes.

\begin{nota}\label{nota:muQ}
Soit $K$ un corps de nombres. Soit $P\in (\P^1(K))^4$ un quadruplet de points distincts. Pour toute place~$v$ de~$K$, $P$ d\'efinit un \'el\'ement~$P_{v}$ de $(\P^1(K_{v}))^4$ et l'on peut consid\'erer la mesure~$\mu_{P_{v}}$ associ\'ee. Posons
\[ \mu_{P} := (\mu_{P_{v}})_{v\in M_{K}}.\]
D'apr\`es le corollaire~\ref{cor:mugammabonnered}, c'est une mesure ad\'elique. 

Par la construction de la section~\ref{sec:projectionstandard}, on associe \`a~$P$ une courbe elliptique~$E_{P}$ et une projection standard $\pi_P \colon E_P \to \P^1_{K}$. Notons~$L_{P}$ le morphisme de Latt\`es sur~$\P^1_{K}$ correspondant. On a alors
\[ \mu_{P} = \rho_{L_{P}},\]
au sens de l'exemple~\ref{ex:rhoR}. On notera encore \'egalement $\mu_{(E_{P},\pi_{P})}$ cette mesure.
\end{nota}

\subsection{En famille}\label{sec:Lattesfamille}

Nous expliquons maintenant comment d\'efinir en famille les mesures associ\'ees aux morphismes de Latt\`es. Consid\'erons la droite affine $\A^1_{\Z[\frac12]}$ avec coordonn\'ee~$\lambda$, son ouvert de Zariski
\[\cZ^0 := \{\lambda \ne 0, 1\} \subset \A^1_{\Zud}\]
et posons
\[\cX^0 := \P^1_{\Zud} \times_{\Zud} \cZ^0.\]
Dans le facteur $\P^1_{\Zud}$, fixons des coordonn\'ees homog\`enes~$[T_{1} \mathbin: T_{2}]$. Posons $t:=\frac{T_{1}}{T_{2}}$ et $u:=\frac{T_{2}}{T_{1}}$. 

Consid\'erons le rev\^etement de $\A^1_{\Zud}\times_{\Zud} \cZ^0 \subset \cX^0$ d\'efini par l'\'equation $v^2 = t(t-1)(t-\lambda)$ et le rev\^etement de $(\P^1_{\Zud} \setminus\{0\})\times_{\Zud} \cZ^0 \subset \cX^0$ d\'efini par l'\'equation $w^2 = u(u-1)(u-\lambda^{-1})$. Ces rev\^etements se recollent par les identifications $u=t^{-1}$ et $w=vt^{-2}$ pour former un rev\^etement~$\cE^0$ de~$\cX^0$ de degr\'e~2. Notons $\rho^0 \colon \cE^0 \to \cX^0$ le morphisme obtenu.

Par construction, pour tout $z\in \cZ^0$, $\cE^0_{z}$ est un rev\^etement de $ \cX^0_{z} \simeq \P^1_{\kappa(z)}$ de degr\'e~2 ramifi\'e en $\infty,0,1, \lambda(z)$. Munie du point rationnel situ\'e au-dessus de~$\infty$, $\cE^0_{z}$ devient une courbe elliptique dont on peut calculer explicitement l'image des points de 2-torsion~:
\[\rho^0_{z}(\cE^0_{z}[2]) = \{\infty, 0, 1, \lambda(z)\}.\]

La courbe~$\cE^0$ est une courbe elliptique relative sur~$\cX^0$ et, comme dans la section~\ref{sec:Lattescorps}, la formule 
\[ L^0(t) = \frac{(t^2-\lambda)^2}{4t(t-1)(t-\lambda)}\]
d\'efinit un morphisme fini~$L^{0} \colon \cX^0 \to \cX^0$, polaris\'e de degr\'e~4, qui fait commuter le diagramme
\[\begin{tikzcd}
\cE^0 \ar[r, "{[}2{]}"] \ar[d,"\rho^0"]& \cE^0 \ar[d,"\rho^0"]\\
\cX^0 \ar[r, "L^0"] & \cX^0
\end{tikzcd}.\]

Consid\'erons maintenant l'espace $(\P^1_{\Zud})^4$ avec coordonn\'ees $c_{1},c_{2},c_{3},c_{4}$ (sur $\A^4_{\Zud}$) et son ouvert de Zariski
\[\cZ := \{\forall i\ne j \in \cn{1}{4},\ c_{i} \ne c_{j}\} \subset (\P^1_{\Zud})^4.\]
Consid\'erons le morphisme $\cZ \to \cZ^0$ donn\'e par 
\[(c_{1},c_{2},c_{3},c_{4}) \mapstoo  [c_{1},c_{2},c_{3},c_{4}] = \frac{(c_{3}-c_{1})(c_{4}-c_{2})}{(c_{3}-c_{2})(c_{4}-b_{1})}.\]
Le morphisme obtenu par changement de base
\[\rho^0 \times_{\cZ^0} \cZ \colon \cE^0 \times_{\cZ^0} \cZ \too  \cX^0 \times_{\cZ^0} \cZ \simeq \P^1_{\Zud} \times_{\Zud} \cZ\]
est une courbe elliptique relative dont la fibre au-dessus de $z\in \cZ$ est un morphisme ramifi\'e en $\infty,0,1,[c_{1},c_{2},c_{3},c_{4}](z)$. En effectuant le changement de coordonn\'ee global
\[t \mapstoo \frac{c_{1}(c_{3}-c_{2})t + c_{2}(c_{1}-c_{3})}{(c_{3}-c_{2})t+(c_{1}-c_{3})}\] 
 sur le facteur~$\P^1_{\Zud}$, on obtient un morphisme 
 \[\rho \colon \cE := \cE^0 \times_{\cZ^0} \cZ \too \cX := \P^1_{\Zud} \times_{\Zud}\cZ\] 
 au-dessus de~$\cZ$ qui est une courbe elliptique relative et dont la fibre au-dessus de $z\in \cZ$ est un morphisme ramifi\'e en $c_{1}(z),c_{2}(z),c_{3}(z),c_{4}(z)$.

En tirant en arri\`ere le morphisme~$L^0$, on obtient un morphisme~$L$ fini polaris\'e de degr\'e~4 qui fait commuter le diagramme
\[\begin{tikzcd}
\cE \ar[r, "{[}2{]}"] \ar[d,"\rho"]& \cE \ar[d,"\rho"]\\
\cX \ar[r, "L"] & \cX
\end{tikzcd}.\]

\medbreak

Consid\'erons maintenant deux copies~$\cZ_{a}$ et~$\cZ_{b}$ de~$\cZ$ avec coordonn\'ees respectives $a=(a_{1},a_{2},a_{3},a_{4})$ et $b=(b_{1},b_{2},b_{3},b_{4})$ et posons $\cZ_{ab} := \cZ_{a}\times_{\Zud}\cZ_{b}$. Notons respectivement~$Z_{a}$, $Z_{b}$ et $Z_{ab}$ leur analytifi\'e au-dessus de~$\cU_{2}$, au sens de la section~\ref{sec:Z1N}. Rappelons la notation~\ref{nota:Lattes}. 

\begin{theo}\label{th:Econtinue}
La fonction
\[ \cE_{ab} \colon z\in Z_{ab} \mapstoo \la \mu_{a(z)}, \mu_{b(z)} \ra \in\R\]
est $\log$-flottante et continue.
\end{theo}
\begin{proof}
Posons $X_{ab} := \EP{1}{\Zud} \times_{\Zud} Z_{ab}$. Notons~$\varphi_{a}$ (resp.~$\varphi_{b}$) le tir\'e en arri\`ere sur~$X_{ab}$ du morphisme de Latt\`es sur la droite relative sur~$Z_{a}$ (resp.~$Z_{b}$). D'apr\`es \cite[lemmes~6.23 et 6.24]{DynamiqueI}, les familles de mesures $(\mu_{a(z)})_{z\in Z_{ab}}$ et $(\mu_{b(z)})_{z\in Z_{ab}}$ sont flottantes \`a potentiels $\log$-flottants. Il d\'ecoule alors des d\'efinitions que la fonction~$\cE_{ab}$ est $\log$-flottante. Elle est continue par \cite[corollaire~D]{DynamiqueI}.
\end{proof}

Comme rappel\'e au d\'ebut de la section~\ref{sec:mesuresdef}, l'\'energie mutuelle est invariante par changement de coordonn\'ees. Nous pouvons donc nous contenter d'\'etudier la fonction~$\cE_{ab}$ sur un espace plus petit que~$Z_{ab}$. 

On dispose d'une action du groupe sym\'etrique~$S_{4}$ sur~$\cZ_{a}$ et~$\cZ_{b}$, par permutation des points, et d'une action de $\PGL_{2}$ sur~$\P^1$, correspondant \`a un changement de coordonn\'ees. L'invariance de l'\'energie mutuelle rappel\'ee ci-dessus entra\^ine que la fonction~$\cE_{ab}$ se factorise par le quotient $(S_{4} \times S_{4})\backslash Z_{ab} / \PGL_{2}$, o\`u $\PGL_{2}$ agit diagonalement.

Notons $\cD_{ab}$ la diagonale de~$\cZ_{ab}$ et~$D_{ab}$ son analytifi\'ee. Il nous suffit d'\'etudier l'\'energie mutuelle sur $Z_{ab} - D_{ab}$, et donc $(S_{4} \times S_{4})\backslash (Z_{ab} - D_{ab}) / \PGL_{2}$, la diagonale \'etant stable par les diff\'erentes actions.

Afin d'obtenir une expression plus simple de ce quotient, posons 
\[\cZ'_{a} := \{a_{1}\ne 0, a_{2}\ne 0, a_{3}\ne 0, a_{4}=\infty\} \subset \cZ_{a},\]
\[\cZ'_{b} := \{b_{1}\ne \infty, b_{2}\ne \infty, b_{3}\ne \infty, b_{4}=0\} \subset \cZ_{b}\]
et $\cZ'_{ab} := \cZ'_{a} \times_{\Zud} \cZ'_{b} \subset \cZ_{ab}$. Notons $Z'_{a}$, $Z'_{b}$ et $Z'_{ab}$ les analytifi\'es correspondants.

Le groupe sym\'etrique~$S_{3}$ agit sur~$\cZ'_{a}$ et~$\cZ'_{b}$. Le tore~$T$ de~$\SL_{2}$, contenant les matrices diagonales, agit sur $\G_{m}$ par homoth\'eties, donc sur $\cZ'_{ab}$ \textit{via} l'action diagonale. On obtient alors une identification canonique 
\[ (S_{4} \times S_{4})\backslash (Z_{ab} - D_{ab}) / \PGL_{2} = (S_{3} \times S_{3})\backslash Z'_{ab}  / T.\]

Remarquons finalement que le quotient $\cZ'_{ab}/T$ s'identifie \`a l'ouvert de Zariski~$\cY_{ab}'$ de $\P^5_{\Zud}$ avec coordonn\'ees homog\`enes $[a_{1}\mathbin:a_{2}\mathbin:a_{3}\mathbin: b_{1}\mathbin:b_{2}\mathbin:b_{3}]$ d\'efini par les conditions
\[ \cY_{ab}' := \{ \forall i, a_{i} \ne 0, b_{i}\ne 0,\ \forall i\ne j, a_{i}\ne a_{j}, b_{i} \ne b_{j} \} \subset \P^5_{\Zud}.\]
Notons~$Y'_{ab}$ son analytifi\'e.

Dans la suite de ce texte, et particuli\`erement \`a la section~\ref{sec:minoration}, nous travaillerons avec l'espace~$\cY'_{ab}$. Le raisonnement ci-dessus assure que la fonction $\cE_{ab} \colon Z_{ab} \to \R$ du th\'eor\`eme~\ref{th:Econtinue} induit une fonction sur~$Y'_{ab}$, not\'ee identiquement, qui reste $\log$-flottante et continue. En outre, la restriction de l'\'etude \`a $Y'_{ab}$ ne nuit pas \`a la g\'en\'eralit\'e du propos.

\section{Estimations centrale et globale}\label{sec:fibrecentralehauteurs}

Dans cette section, \'etant donn\'es un espace de Berkovich flottant sur~$\Z$ et une fonction $\log$-flottante, nous expliquons comment obtenir,  \`a partir d'une minoration sur la fibre centrale, une minoration uniforme en presque toute place, ainsi que des minorations plus faibles aux places restantes. Ce r\'esultat nous permettra par la suite d'obtenir des minorations globales par des hauteurs. 

Fixons le cadre. Soit $N\in \Z \setminus \{0\}$. Soit~$\cY$ un sch\'ema localement de type fini sur~$\Z[\frac1N]$, g\'en\'eriquement non vide. Notons $Y := \cY^\an$ son analytifi\'e, au sens de la section~\ref{sec:Z1N}. C'est un espace de Berkovich sur~$\Z$ flottant. L'image du morphisme structural $\pr \colon Y \to \cM(\Z)$ est de la forme $\cU_{N_{Y}}$, o\`u $N_{Y}$ est un multiple de~$N$. 

Rappelons qu'une fonction $f \colon Y \to \R$ est dite $\log$-flottante si
\[ \forall y\in Y, \forall \eps \in \intof{0,1},\ f(y^\eps)=\eps f(y).\]

Nous avons choisi d'\'enoncer les r\'esultats de cette section dans le cadre des espaces de Berkovich sur~$\Z$, afin de ne pas alourdir les notations. Pr\'ecisons cepandant qu'ils restent valables \textit{mutatis mutandis} sur tout anneau d'entiers de corps de nombres.

\begin{lemm}\label{lem:extensionmajoration}
Soit~$V$ une partie flottante de~$Y$ et supposons qu'il existe $N_{V}\in \Z$ tel que $\pr(V)=\cU_{N_{V}}$. Soient $\cE,f_{1},f_{2} \in\cC(V,\R)$ des fonctions log-flottantes continues. Pour $s \in \R$, posons 
\[ V_{s} := \{y \in V : f_{1}(y) = s\} \textrm{ et } V_{> s} :=  \{y \in V : f_{1}(y) > s\}.\]
Supposons qu'il existe $s_{0} \in  \R_{>0}$ 
tel que l'application $\pr_{\vert V_{s_{0}}} \colon V_{s_{0}} \to \cU_{N_{V}}$ soit ferm\'ee et $e_{0} \in  \R$ tel que
\[ \forall y\in V_{s_{0}} \cap \pr^{-1}(a_{0}),\ \cE(y) \ge e_{0} + f_{2}(y).\]

Alors, pour tout $\alpha\in \R_{>0}$,  il existe $t_{\alpha}\in  \R_{>0}$ et un ensemble fini $M_{\alpha} \subset M_{\Q}$ tels que
\[\forall v \in M_{\Q} \setminus M_{\alpha},\ \forall y\in V_{>0} \cap \pr^{-1}(a_{v}),\ \cE(y) \ge\, \frac{e_{0}-\alpha}{s_{0}}\, f_{1}(y)+f_{2}(y)\]
et 
\[\forall v \in M_{\alpha},\ \forall y\in V_{>t_{\alpha}} \cap \pr^{-1}(a_{v}),\ \cE(y) \ge\, \frac{e_{0}-\alpha}{s_{0}}\, f_{1}(y) +f_{2}(y).\]
\end{lemm}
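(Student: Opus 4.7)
The plan is to combine continuity of $\cE,f_{1},f_{2}$ on $V_{s_{0}}$ with the closedness of $\pr_{|V_{s_{0}}}$ to produce a lower bound over a neighbourhood of $a_{0}$ at the level $f_{1} = s_{0}$, then transport it to arbitrary fibres by $\log$-flotation. It is convenient to introduce the continuous $\log$-flottant function
\[ \Phi := \cE - f_{2} - \tfrac{e_{0}-\alpha}{s_{0}}\, f_{1} \colon V \longrightarrow \R, \]
so that the desired inequalities amount to $\Phi \ge 0$ on the prescribed pieces of $V_{>0}$.

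On $V_{s_{0}}$, one has $\Phi = (\cE - f_{2}) - (e_{0}-\alpha)$, which by the hypothesis satisfies $\Phi \ge \alpha > 0$ on $V_{s_{0}} \cap \pr^{-1}(a_{0})$. The closed set $D := \{\Phi \le 0\} \cap V_{s_{0}}$ is therefore disjoint from $\pr^{-1}(a_{0})$. Since $\pr_{|V_{s_{0}}}$ is closed, $\pr(D)\subset\cU_{N_{V}}$ is closed and avoids $a_{0}$, so $W := \cU_{N_{V}} \setminus \pr(D)$ is an open neighbourhood of $a_{0}$ on which $\Phi>0$ holds at every point of $V_{s_{0}}$. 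The topology of $\cM(\Z)$ near $a_{0}$, determined by constraints $\big|\abs{f}_{x}-1\big|<\delta$ on finite families of integers, is \og adelic \fg : any neighbourhood of $a_{0}$ contains the entire $v$-adic branch for all but finitely many $v$, and an initial segment $\intfo{a_{0}, a_{v}^{\eps_{v}}}$ for the remaining places. Choosing such a basic neighbourhood inside $W$ produces a finite set $M_{\alpha} \subset M_{\Q}$, which necessarily contains $\infty$, along with reals $\eps_{v} > 0$ for $v \in M_{\alpha}$; I then set $t_{\alpha} := \max\big(s_{0},\ \max_{v \in M_{\alpha}} s_{0}/\eps_{v}\big)$.

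Because $\Phi(y^{\eta}) = \eta\, \Phi(y)$, the sign of $\Phi$ is constant along flot orbits, and it suffices to connect each $y \in V_{>0} \cap \pr^{-1}(a_{v})$ in the allowed range to a point of $V_{s_{0}}$ lying above $W$. Setting $\eta := s_{0}/f_{1}(y)$, when $\eta \le 1$ (that is, $f_{1}(y) \ge s_{0}$) the flot $y^{\eta} \in V_{s_{0}}$ projects to $a_{v}^{\eta}$, which lies in $W$ either because $v \notin M_{\alpha}$ (the full branch is contained) or because $v \in M_{\alpha}$ with $f_{1}(y) > t_{\alpha}$, forcing $\eta < \eps_{v}$. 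This settles the range $f_{1}(y) \ge s_{0}$, in particular all of $V_{>t_{\alpha}}$ for $v \in M_{\alpha}$. For $v \notin M_{\alpha}$ with $0 < f_{1}(y) < s_{0}$, the place $v$ is non-archimedean since $\infty \in M_{\alpha}$, and one uses that the flot extends to $\eta > 1$ on the non-archimedean part of $V$ — raising a non-archimedean seminorm to any positive power is again a non-archimedean seminorm — so $y^{\eta}$ with $\eta = s_{0}/f_{1}(y) > 1$ is a well-defined point of $V_{s_{0}}$ above $a_{v}^{\eta}$, which still belongs to the full $v$-adic branch contained in $W$.

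The most delicate point is precisely this last case: the defining restriction $\eta \in \intof{0,1}$ of the flot in $\cM(\Z)$, imposed for compatibility with archimedean seminorms, has to be lifted to all positive $\eta$ on the non-archimedean fibres of $V$, where the flot acts bidirectionally. Everything else — the central bound, the adelic reading of neighbourhoods of $a_{0}$, and the log-flottant transport — is routine once this extension is in place.
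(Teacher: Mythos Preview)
Your proof is correct and follows essentially the same route as the paper's: closedness of $\pr_{|V_{s_0}}$ turns the central inequality into one over a full neighbourhood of $a_0$ at level $f_1 = s_0$, and the flot $y \mapsto y^{s_0/f_1(y)}$ transports it to every admissible fibre. You are in fact more explicit than the paper about the case $\eta > 1$ on non-archimedean fibres --- the paper simply writes $\eps := s_0/f_1(y) \in \R_{>0}$ and proceeds, whereas you isolate this as the delicate point.
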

\begin{proof}
Soit $\alpha\in \intoo{0,1}$. 
Posons 
\[W := \{y \in V_{s_{0}} : \cE(y) \ge e_{0}- \alpha + f_{2}(y)\} \]
et
\[W' := \{y \in V_{s_{0}} : \cE(y) \le e_{0}- \alpha + f_{2}(y)\}. \]
L'ensemble~$W'$ est une partie ferm\'ee de~$V_{s_{0}}$ et son image $\pr(W')$ est donc une partie ferm\'ee de~$\cU_{N_{V}}$. Puisque $W'$ ne rencontre pas~$\pr^{-1}(a_{0})$, l'ensemble $B_{\alpha} := \cU_{N_{V}} \setminus \pr(W')$ est un voisinage de~$a_{0}$. Par construction, on a $\pr^{-1}(B_{\alpha})\cap V_{s_{0}} \subset W$.

Il existe une partie finie~$M_{\alpha}$ de~$M_{\Q}$ contenant la place infinie et $\eps_{\alpha} \in \intof{0,1}$ telles que
\[B_{\alpha} \supset \bigcup_{v \in M_{\Q} \setminus M_{\alpha}} \intfo{a_{0},a_{v,\infty}} \cup \bigcup_{v \in M_{\alpha}} \intfo{a_{0},a_{v,\eps_{\alpha}}}. \]

Soient $v \in M_{\Q} \setminus M_{\alpha}$ et $y \in V_{>0} \cap \pr^{-1}(a_{v})$. Posons $\eps := s_{0}/f_{1}(y) \in \R_{>0}$. Puisque~$f_{1}$ est log-flottante, on a $f_{1}(y^\eps) = \eps f_{1}(y) = s_{0}$, donc $y^\eps \in \pr^{-1}(B_{\alpha}) \cap V_{s_{0}} \subset W$. On en d\'eduit que $\cE(y^\eps) \ge e_{0}-\alpha +f_{2}(y^\eps)$ et donc, puisque $\cE$ et~$f_{2}$ sont log-flottantes, que
\[\cE(y) \ge \frac{e_{0}-\alpha}{\eps}+f_{2}(y) = \frac{e_{0}-\alpha}{s_{0}}\, f_{1}(y) +f_{2}(y).\]

Posons $t_{\alpha} := s_{0}/{\eps_{\alpha}} \in \R_{>0}$. Soient $v \in M_{\alpha}$ et $y \in V_{>t_{\alpha}} \cap \pr^{-1}(a_{v})$. Posons $\eps := s_{0}/f_{1}(y) \in \intoo{0,\eps_{\alpha}}$. En raisonnant comme pr\'ec\'edemment, on montre que $y^\eps \in W$, puis que 
\[\cE(y) \ge \frac{e_{0}-\alpha}{s_{0}}\, f_{1}(y) +f_{2}(y).\]
\end{proof}

Introduisons quelques notations qui nous seront utiles pour pr\'esenter une version globale du r\'esultat.

\begin{nota}\label{nota:LQ}
Soit $Q \in \cY(\Qbar)$. Le corps r\'esiduel $\kappa(Q)$ est une extension finie de~$\Q$. Pour tout $v\in M_{\Q}$, l'image~$L_{v}(Q)$ de $\Spec( \kappa(Q) \otimes_{\Q} \Q_{v}) \to \cY_{\Q_{v}}$ est un ensemble fini de points ferm\'es. Il s'identifie \`a un ensemble de points de $\pr^{-1}(a_{v}) \simeq \cY_{\Q_{v}}^\an$.

Pour tout $q \in L_{v}(Q)$, posons 
\[N_{q} := \frac{[\cH(q) \mathbin: \Q_{v}]}{[\kappa(Q) \mathbin: \Q]}.\]  
Posons 
\[L(Q) := \bigcup_{v\in M_{\Q}} L_{v}(Q).\]
Pour toute fonction $\cE \colon Y \to \R_{\ge0}$, posons
\[h_{\cE}(Q) := \sum_{q\in L(Q)\cap V} N_{q} \,\cE(q).\]
\end{nota}

\begin{exem}\label{ex:hauteur}
Consid\'erons le sch\'ema $\G_{\mathrm{m}}$ sur~$\Z$ avec coordonn\'ee~$T$ 
et la fonction
\[\log^+(\abs{T}) \colon \G^\an_{\mathrm{m}} \to \R.\]
Alors, pour tout $x \in \Qbar$, on a
\[h_{\log^+(\abs{T})}(x) = h(x).\]
\end{exem}

\begin{theo}\label{thm:minoration}
Soit~$V$ une partie flottante de~$Y$ et supposons qu'il existe $N_{V}\in \Z$ tel que $\pr(V)=\cU_{N_{V}}$. Soient $\cE, f_{1} \colon V \to \R_{\ge 0}$, $f_{2} \colon V \to \R$ et $g_{1} \colon V \to \R\cup\{-\infty\}$ des fonctions $\log$-flottantes continues. 

Pour $s,t\in \R_{>0}$, posons 
\[V_{s} := \{y\in V : f_{1}(y) \le s\}\]
et
\[W_{t} := \{y\in V : g_{1}(y) \le t f_{2}(y) \}.\]

Supposons que
\begin{enumerate}[i)]
\item pour tous $s,t\in \R_{>0}$, l'application $\pr_{\vert V_{s}\cap W_{t}} \colon  V_{s} \cap W_{t} \to \cU_{N_{V}}$ est propre~;
\item il existe $A\in \R_{>0}$ et une famille presque nulle $(B_{v})_{v\in M_{\Q}}$ d'\'elements de~$\R$ tels que
\[\forall v\in M_{\Q}, \forall y\in V\cap \pr^{-1}(a_{v}),\ g_{1}(y) \le A\, f_{1}(y) + B_{v}~;\]
\item il existe $r_{0}\in \R_{>0}$ et $\tau \colon \R_{>0} \to \R_{>0}$, avec $\lim_{+\infty}\tau = +\infty$, tels que
\[\forall r\ge r_{0}, \forall y\in W_{\tau(r)}\cap \pr^{-1}(a_{0}),\ \cE(y) \ge \frac1r  f_{2}(y).\]
\end{enumerate}
Alors, pour tout $\eps \in \R_{>0}$, il existe $C\in \R_{>0}$ et $D\in \R$ telles que, pour tout $Q \in \cY(\Qbar)$, on ait \footnote{Les deux membres de l'\'egalit\'e peuvent \^etre infinis.}
\begin{align*}
\sum_{q\in L_{f_{1}}(Q)\cap V} N_{q}\,\cE(q) & \ge C\, \Big( \sum_{q\in L_{f_{1}}(Q)\cap V} N_{q} f_{2}(q) - \eps \sum_{q\in L(Q)\cap V} N_{q} f_{1}(q)\Big) +D,
\end{align*}
o\`u $L_{f_{1}}(Q) := \{q\in L(Q) : f_{1}(q)>0\}$.
\end{theo}
\begin{proof}
Pour $r,s \in \R_{>0}$, posons
\[ Z_{r,s} := \{y \in W_{\tau(r)} : f_{1}(y) = s\} \textrm{ et } Z_{r,> s} :=  \{y \in W_{\tau(r)} : f_{1}(y) > s\}.\]

Soit $s\in \R_{>0}$. Soit $r\in \intfo{r_{0},+\infty}$ tel que $\tau(r)\ge \frac{2A}{\eps}$. La partie~$W_{\tau(r)}$ de~$V$ est $\log$-flottante. Il suit de l'hypoth\`ese~i) que l'application $\pr_{\vert Z_{r,s}} \colon Z_{r,s} \to \cU_{N_{V}}$ est propre, donc ferm\'ee. On a donc
\[ \forall y\in Z_{r,s} \cap \pr^{-1}(a_{0}),\ \cE(y) \ge \frac1r\, f_{2}(y).\]
D'apr\`es le lemme~\ref{lem:extensionmajoration} appliqu\'e avec $e_{0}=0$, $f_{1}=f_{1}$, $f_{2}=\frac1r f_{2}$ et $\alpha = \frac{s\eps}{2r}$, il existe $t_{r} \in \R_{>0}$ et un ensemble fini~$M_{r}$ de places de~$\Q$ tels que
\[\forall v \in M_{\Q} \setminus M_{r},\ \forall y\in Z_{r,>0} \cap \pr^{-1}(a_{v}),\ \cE(y) \ge\, -\frac{\eps}{2r}\,f_{1}(y) +\frac{1}{r}\, f_{2}(y)\]
et 
\[\forall v \in M_{r},\ \forall y\in Z_{r,>t_{r}} \cap \pr^{-1}(a_{v}),\ \cE(y) \ge\, -\frac{\eps}{2r}\,f_{1}(y) +\frac{1}{r}\, f_{2}(y).\]
Soit $v\in M_{r}$. Par hypoth\`ese, l'ensemble
\[\{y \in W_{\tau(r)} \cap \pr^{-1}(a_{v}) : f_{1}(y)\le t_{r}\}\]
est compact. On en d\'eduit qu'il existe une constante $D_{v} \in \R$ telle que 
\[\forall y\in Z_{r,>0}\cap \pr^{-1}(a_{v}),\ \cE(y) \ge\,  -\frac{\eps}{2r}\,f_{1}(y) + \frac{1}{r}\, f_{2}(y)+D_{v}.\]

\medbreak

Soit $Q \in \cY(\Qbar)$. Posons $L_{f_{1},r}(Q) := L_{f_{1}}(Q)\cap W_{\tau(r)}$ et $L'_{f_{1},r}(Q) := L_{f_{1}}(Q) \setminus L_{f_{1},r}(Q)$.
Pour tout $q\in L'_{f_{1},r}(Q) \cap V$, on a donc
\[f_{2}(q) \le \frac1{\tau(r)}\, g_{1}(q) \le \frac{A}{\tau(r)}\, f_{1}(q) +\frac{B_{v}}{\tau(r)}.\]

Soit $M'$ une partie finie de~$M_{\Q}$ et posons $A_{M'} := \{ a_{v} : v\in M'\}$. On a alors
\begin{align*}
\sum_{q \in L_{f_{1},r}(Q)\cap V \cap A_{M'}} N_{q}\,\cE(q) & \ge \sum_{q \in L_{f_{1},r}(Q)\cap V\cap A_{M'}} N_{q}\, \Big(\frac1{r}\, f_{2}(q) - \frac{\eps}{2r}\, f_{1}(q)\Big) + \sum_{v\in M_{r}\cap M'} D_{v}\\
& \ge \sum_{q \in L_{f_{1}}(Q)\cap V\cap A_{M'}} N_q\, \Big(\frac1{r}\, f_{2}(q) - \frac{\eps}{2r}\, f_{1}(q)\Big)  + \sum_{v\in M_{r}\cap M'} D_{v}\\ 
&\quad - \sum_{q \in L'_{f_{1},r}(Q)\cap V\cap A_{M'}} N_q \,\frac1{r}\, f_{2}(q)\\
&\ge \frac1r \sum_{q \in L_{f_{1}}(Q)\cap V\cap A_{M'}} N_q f_{2}(q) -\frac\eps{2r} \sum_{q \in L(Q)\cap V\cap A_{M'}} N_q f_{1}(q) + \sum_{v\in M_{r}\cap M'} D_{v}\\
&\quad  - \frac{A}{r\tau(r)} \sum_{q \in L'_{f_{1},r}(Q)\cap V\cap A_{M'}} N_q f_{1}(q) - \frac{1}{r\tau(r)} \sum_{v\in M'} B_{v}\\
&\ge  \frac1{r}\sum_{q \in L_{f_{1}}(Q)\cap V\cap A_{M'}} N_q f_{2}(q) - \frac1r \Big(\frac\eps2 + \frac{A}{\tau(r)}\Big) \sum_{q \in L(Q)\cap V\cap A_{M'}} N_q f_{1}(q) \\  
&\quad + \sum_{v\in M_{r}\cap M'} D_{v} - \frac{1}{r\tau(r)} \sum_{v\in M'} B_{v}.
\end{align*}
Le r\'esultat s'en d\'eduit en passant au supremum sur l'ensemble des parties finies de~$M_{\Q}$.
\end{proof}

Nous pr\'esentons maintenant des versions simplifi\'ees du th\'eor\`eme~\ref{thm:minoration}. Nous utiliserons \`a plusieurs reprises le r\'esultat suivant.

\begin{lemm}\label{lem:hauteurva}
Soit $K$ un corps de nombres. Soit~$n\in \N^*$. Pour tout $u=(u_{1},\dotsc,u_{n}) \in (K^*)^n$, on a 
\[\sum_{v\in M_{K}} N_{v}\, \max_{1\le i\le n}(\abs{\log(\abs{u_{i}})}_{v}) \le (n+1)\, h(u).\]
\end{lemm}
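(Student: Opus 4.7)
The plan is to reduce everything to the elementary identity
\[ |\log(x)| = \log^+(x) + \log^+(x^{-1}) \qquad (x \in \R_{>0}), \]
which one checks by splitting according to whether $x \ge 1$ or $x \le 1$. Applied pointwise at a place $v \in M_{K}$ and an index~$i$, this gives
\[ \abs{\log(\abs{u_{i}}_{v})} = \log^+(\abs{u_{i}}_{v}) + \log^+(\abs{u_{i}^{-1}}_{v}). \]
Taking the maximum over~$i$ and using the sub-additivity $\max(a_{i}+b_{i}) \le \max a_{i} + \max b_{i}$ for non-negative terms yields
\[ \max_{1\le i\le n}\abs{\log(\abs{u_{i}}_{v})} \le \max_{1\le i\le n} \log^+(\abs{u_{i}}_{v}) + \max_{1\le i\le n} \log^+(\abs{u_{i}^{-1}}_{v}). \]

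Next I would exchange $\max$ and $\log^+$: since $\log^+$ is non-decreasing, $\max_{i} \log^+(x_{i}) = \log^+(\max_{i} x_{i})$ for $x_{i}\ge 0$. Multiplying by~$N_{v}$ and summing over $v\in M_{K}$, the first contribution is exactly $h(u) = h([1\mathbin:u_{1}\mathbin:\dotsb\mathbin:u_{n}])$ by definition, while the second contribution is $h(u_{1}^{-1},\dotsc,u_{n}^{-1})$. So we are reduced to showing
\[ h(u_{1}^{-1},\dotsc,u_{n}^{-1}) \le n\, h(u). \]

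The last step uses two classical facts. First, $\max(1, \abs{u_{1}^{-1}}_{v}, \dotsc, \abs{u_{n}^{-1}}_{v}) \le \prod_{i=1}^n \max(1, \abs{u_{i}^{-1}}_{v})$, which after taking $\log$ and summing over~$v$ gives $h(u_{1}^{-1},\dotsc,u_{n}^{-1}) \le \sum_{i} h(u_{i}^{-1})$. Second, for each $i$ the product formula gives $h(u_{i}^{-1}) = h(u_{i})$ (since $\log^+(x) - \log^+(x^{-1}) = \log(x)$ and $\sum_{v} N_{v} \log(\abs{u_{i}}_{v}) = 0$), and $h(u_{i}) \le h(u)$ because $(1, u_{i})$ is a sub-tuple of $(1, u_{1}, \dotsc, u_{n})$. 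Combining these two facts yields $h(u^{-1}) \le n\, h(u)$, and hence the announced bound $(n+1)\, h(u)$. There is no serious obstacle: the statement is purely a formal manipulation of local heights, the only non-trivial input being the product formula in the guise of $h(x)=h(x^{-1})$.
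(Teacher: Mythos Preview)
Your proof is correct and follows essentially the same route as the paper's. In the paper's notation $M_{i,v}=\log^{+}\abs{u_{i}}_{v}$ and $-m_{i,v}=\log^{+}\abs{u_{i}^{-1}}_{v}$, so your decomposition $\abs{\log x}=\log^{+}x+\log^{+}(x^{-1})$ is exactly the paper's $\log\abs{u_{i}}_{v}=M_{i,v}+m_{i,v}$, and your key step $h(u_{1}^{-1},\dotsc,u_{n}^{-1})\le n\,h(u)$ is the paper's inequality $-\sum_{v}N_{v}m_{v}\le n\sum_{v}N_{v}M_{v}$; the only cosmetic difference is that you package the product-formula step as the identity $h(u_{i})=h(u_{i}^{-1})$ for each~$i$, whereas the paper sums the $n$ product formulas together before estimating.
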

\begin{proof}
Soit $u=(u_{1},\dotsc,u_{n}) \in (K^*)^n$. Pour $v\in M_{K}$ et $i\in \cn{1}{n}$, posons $m_{i,v} := \min(0,\log(\abs{u_{i}}_{v}))$ et $M_{i,v} := \max(0,\log(\abs{u_{i}}_{v}))$. Pour $v\in M_{K}$, posons $m_{v} := \min_{1\le i\le n} (m_{i,v})$ et $M_{v} := \max_{1\le i\le n} (M_{i,v})$.

On a
\begin{align*}
0 &= \sum_{v\in M_{K}} \sum_{i=1}^n N_{v} \log(\abs{u_{i}}_{v})\\
&=  \sum_{v\in M_{K}} \sum_{i=1}^n N_{v}(m_{i,v}+M_{i,v})\\
&\le \sum_{v\in M_{K}} N_{v}(m_{v} + nM_{v}),
\end{align*}
d'o\`u
\[- \sum_{v\in M_{K}} N_{v} m_{v} \le n  \sum_{v\in M_{K}} N_{v} M_{v}.\]
On en d\'eduit que
\begin{align*}
\sum_{v\in M_{K}} N_{v} \max_{1\le i\le n}(\abs{\log(\abs{u_{i}})}_{v}) & = \sum_{v\in M_{K}} N_{v} \max_{1\le i\le n} (\max(M_{v,i},-m_{v,i}))\\
&= \sum_{v\in M_{K}} N_{v}  \max(M_{v},-m_{v})\\
&\le \sum_{v\in M_{K}} N_{v}  (M_{v}-m_{v})\\
&\le (n+1) \sum_{v\in M_{K}} N_{v}  M_{v}.
\end{align*}
\end{proof}

Le r\'esultat qui suit propose des versions simplifi\'ees de la conclusion du th\'eor\`eme~\ref{thm:minoration}, sous certaines hypoth\`eses.

\begin{coro}\label{cor:minorationF1}
Pla\c cons-nous dans le cadre du th\'eor\`eme~\ref{thm:minoration} avec $V=Y$.

\begin{enumerate}[i)]
\item Supposons qu'il existe une famille finie $F_{1} = (F_{1,i})_{i\in I_{1}}$ d'\'el\'ements inversibles de~$\cO(\cY)$ telle que 
\[f_{1} = \max_{i\in I_{1}} (\abs{\log(\abs{F_{1,i}})}).\]
Alors, pour tout $\eps \in \R_{>0}$, il existe $C\in \R_{>0}$ et $D\in \R$ telles que, pour tout $Q \in \cY(\Qbar)$, on ait
\begin{align*}
h_{\cE}(Q) \ge C\, \Big( \sum_{q\in L_{f_{1}}(Q)} N_{q} f_{2}(q) - \eps \, h(F_{1}(Q))
\Big) +D.
\end{align*}

\item Supposons, en outre, qu'il existe une famille finie $F_{2} = (F_{2,i})_{i\in I_{2}}$ d'\'el\'ements inversibles de~$\cO(\cY)$ telle que 
\[f_{2} = \max_{i\in I_{2}} (\abs{\log(\abs{F_{2,i}})})\]
et que, pour tout $y\in Y$, on ait 
\[\big(\forall i\in I_{1}, \abs{F_{1,i}(y)}=1\big) \implies \big(\forall i\in I_{2}, \abs{F_{2,i}(y)}=1\big).\]
Alors, pour tout $\eps \in \R_{>0}$, il existe $C\in \R_{>0}$ et $D\in \R$ telles que, pour tout $Q \in \cY(\Qbar)$, on ait
\begin{align*}
h_{\cE}(Q) \ge C\, \big( h(F_{2}(Q)) - \eps\, h(F_{1}(Q))\big) +D.
\end{align*}

\item Supposons, en outre, que $F_{1}=F_{2}$. Alors, il existe $C\in \R_{>0}$ et $D\in \R$ telles que, pour tout $Q \in \cY(\Qbar)$, on ait
\begin{align*}
h_{\cE}(Q) \ge C\, h(F_{2}(Q))+D.
\end{align*}
\end{enumerate}
\qed
\end{coro}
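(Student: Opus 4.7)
The plan is to reduce each of the three parts to Theorem~\ref{thm:minoration} by translating the two sums $\sum_{q\in L(Q)} N_q f_1(q)$ and $\sum_{q\in L_{f_1}(Q)} N_q f_2(q)$ appearing in its conclusion into the projective heights $h(F_1(Q))$ and $h(F_2(Q))$.

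For~(i), I first note that for any $Q\in \cY(\Qbar)$ with residue field $K = \kappa(Q)$, the set $L(Q)$ is canonically in bijection with $M_K$, and under this identification the weights $N_q$ coincide with the standard place-normalizations $N_w$ defined in the paper. Since $f_1 = \max_{i\in I_1}|\log|F_{1,i}||$ and each $F_{1,i}$ is invertible on $\cY$, Lemma~\ref{lem:hauteurva} applied to the tuple $F_1(Q)\in (K^*)^{\sharp I_1}$ yields
\[ \sum_{q\in L(Q)} N_q f_1(q) = \sum_{w\in M_K} N_w \max_{i\in I_1} |\log|F_{1,i}(Q)||_w \le (\sharp I_1+1)\, h(F_1(Q)). \]
Applying Theorem~\ref{thm:minoration} with $\eps/(\sharp I_1+1)$ in place of $\eps$ and substituting this bound produces~(i).

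For~(ii), the additional hypothesis is equivalent to the pointwise implication $f_1(q)=0 \Rightarrow f_2(q)=0$, so $f_2$ is supported in $\{f_1>0\}$ and the truncation $L_{f_1}(Q)$ may be dropped from the $f_2$-sum. Hence $\sum_{q\in L_{f_1}(Q)} N_q f_2(q) = \sum_{w\in M_K} N_w \max_{i\in I_2}|\log|F_{2,i}(Q)||_w$, and the elementary pointwise inequality $\max_{i}|\log|x_i|| \ge \log^+\max_{i}|x_i|$ gives
\[ \sum_{q\in L_{f_1}(Q)} N_q f_2(q) \ge h(F_2(Q)). \]
Combined with the $F_1$-estimate from~(i), this yields~(ii).

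Finally, for~(iii) the assumption $F_1=F_2$ trivially entails the hypothesis of~(ii), so applying~(ii) gives $h_\cE(Q) \ge C(1-\eps)\, h(F_2(Q)) + D$; choosing any $\eps<1$ produces a positive coefficient and closes the argument. No real obstacle is anticipated: the corollary is essentially a dictionary translation from the local-sum language of Theorem~\ref{thm:minoration} into the standard projective-height language, with Lemma~\ref{lem:hauteurva} and the elementary inequality $|\log|x||\ge \log^+|x|$ as the only ingredients.
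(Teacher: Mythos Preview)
Your proposal is correct and matches the paper's own treatment: the corollary is stated with a bare \qed{} precisely because it follows from Theorem~\ref{thm:minoration} by the two translations you describe --- bounding $\sum N_q f_1(q)$ via Lemma~\ref{lem:hauteurva} for~(i), using the implication $f_1=0\Rightarrow f_2=0$ together with $\max_i|\log|x_i||\ge \log^+\max_i|x_i|$ for~(ii), and specializing $\eps<1$ for~(iii). The only point worth making explicit is that passing from $\sum_{q\in L_{f_1}(Q)} N_q\,\cE(q)$ on the left of Theorem~\ref{thm:minoration} to $h_\cE(Q)$ uses $\cE\ge 0$, which you implicitly assume but do not state.
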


\begin{rema}
L'\'enonc\'e du corollaire~\ref{cor:minorationF1} reste valable en rempla\c cant partout $h_{\cE}(Q)$ par $\sum_{q \in L_{f_{1}}(Q)} N_{q}\,\cE(q)$.
\end{rema}

Int\'eressons-nous maintenant \`a des simplications des hypoth\`eses du th\'eor\`eme~\ref{thm:minoration}, rendues possibles par le r\'esultat suivant.

\begin{lemm}\label{lem:logPj}
Soit $K$ un corps de nombres. Soient $n\in \N^*$ et $P\in K[T_{1}^{\pm1},\dotsc,T_{n}^{\pm1}]$. Il existe $A \in \R_{>0}$ et une famille presque nulle $(B_{v})_{v\in M_{K}}$ d'\'elements de~$\R_{\ge 0}$ telles que, pour toute $v\in M_{K}$ et tout $x=(x_{1},\dotsc,x_{n}) \in (K_{v}^\ast)^n$, on ait 
\[\log(\abs{P(x)}_{v}) \le A\, \max_{1\le i\le n}(\abs{\log(\abs{x_{i}}_{v})})+ B_{v}.\]
\end{lemm}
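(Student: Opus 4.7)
The plan is to expand $P$ in its monomials and bound each one by brute force, separating the archimedean from the ultrametric contribution.

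Write $P = \sum_{\alpha \in S} c_\alpha T^\alpha$, where $S \subset \Z^n$ is the (finite) support of~$P$ and $c_\alpha \in K^*$ for $\alpha \in S$. For $v\in M_K$ and $x=(x_1,\dotsc,x_n)\in (K_v^*)^n$, set $M_v(x) := \max_{1\le i\le n}\abs{\log(\abs{x_i}_v)}$. For any $\alpha=(\alpha_1,\dotsc,\alpha_n)\in S$, the elementary inequality
\[\log(\abs{x^\alpha}_v) = \sum_{i=1}^n \alpha_i \log(\abs{x_i}_v) \le \Big(\sum_{i=1}^n \abs{\alpha_i}\Big)\, M_v(x)\]
holds. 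Therefore, setting $A := \max_{\alpha\in S}\sum_{i=1}^n \abs{\alpha_i}$, we get $\abs{x^\alpha}_v \le \exp(A\, M_v(x))$ for every $\alpha\in S$.

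Next, distinguish the two cases. If $v$ is ultram\'etrique, the ultram\'etric inequality yields
\[\abs{P(x)}_v \le \max_{\alpha\in S} \abs{c_\alpha}_v\, \abs{x^\alpha}_v \le \Big(\max_{\alpha\in S}\abs{c_\alpha}_v\Big)\, \exp(A\, M_v(x)),\]
hence $\log(\abs{P(x)}_v) \le A\, M_v(x) + B_v$ with $B_v := \max_{\alpha\in S}\log^+(\abs{c_\alpha}_v)$. If $v$ is archim\'edienne, the ordinary triangle inequality gives
\[\abs{P(x)}_v \le \sum_{\alpha\in S} \abs{c_\alpha}_v\, \abs{x^\alpha}_v \le (\sharp S)\,\Big(\max_{\alpha\in S}\abs{c_\alpha}_v\Big)\, \exp(A\, M_v(x)),\]
hence the same estimate with $B_v := \log(\sharp S) + \max_{\alpha\in S}\log^+(\abs{c_\alpha}_v)$.

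It remains to check that the family $(B_v)_{v\in M_K}$ is presque nulle with values in~$\R_{\ge 0}$. By construction $B_v\ge 0$ for every~$v$. The set $M_{K,\infty}$ of archimedean places is finite, so only finitely many archimedean $B_v$ can be nonzero. For an ultram\'etric place~$v$, each coefficient $c_\alpha\in K^*$ satisfies $\abs{c_\alpha}_v\le 1$ for almost every~$v$; since $S$ is finite, the same holds simultaneously for all $\alpha\in S$ outside a finite set of places, on which $B_v=0$. This gives the required almost vanishing and concludes the proof. The argument uses nothing beyond the triangle inequality and the fact that nonzero elements of a number field are $v$-integral at almost every place, so there is no real obstacle.
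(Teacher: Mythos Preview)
Your proof is correct and follows essentially the same route as the paper's: expand $P$ in monomials, bound each monomial by $\exp(A\,M_v(x))$ with $A=\max_{\alpha\in S}\sum_i|\alpha_i|$, and split into the ultrametric and archimedean cases. Your use of $\log^+$ in the definition of $B_v$ and your explicit verification that the family $(B_v)_v$ is presque nulle are slight refinements over the paper's presentation, but the argument is the same.
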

\begin{proof}
Pour tout $h=(h_{1},\dotsc,h_{n})\in \Z^n$, posons $\abs{h}:=\sum_{i=1}^n \abs{h_{i}}$ et $T^h := \prod_{i=1}^n T_{i}^{h_{i}}$. \'Ecrivons $P$ sous la forme  
\[P = \sum_{h\in H} a_{h} T^h,\]
o\`u $H$ est une partie finie de $\Z^n$. On peut supposer que $P\ne 0$, $H\ne \emptyset$ et, pour tout $h\in H$, $a_{h}\ne 0$.

Soit $v\in M_{K}$. Soit $x=(x_{1},\dotsc,x_{n}) \in (K_{v}^\ast)^n$. Posons $M_{x} := \max_{1\le i\le n}(\abs{x_{i}}_{v},\abs{x_{i}}_{v}^{-1})$ et $m_{x}:= \log(M_{x}) = \max_{1\le i\le n}(\abs{\log(\abs{x_{i}}_{v})})$. 

$\bullet$ Supposons que $v\in M_{f,K}$. On a alors
\begin{align*}
\abs{P(x)}_{v} & \le \max_{h=(h_{1},\dotsc,h_{n})\in E}\Big(\abs{a_{h}}_{v} \prod_{i=1}^n \abs{x_{i}}_{v}^{h_{i}}\Big) \\
&\le \max_{h\in H}(\abs{a_{h}}_{v} M_{x}^{\abs{h}}),
\end{align*}
d'o\`u 
\begin{align*}
\log(\abs{P(x)}_{v}) &\le \max_{h\in H} (\log(\abs{a_{h}}_{v}) + \abs{h} m_{x})\\
&\le \max_{h\in H} (\abs{h})\, m_{x} + \max_{h\in H} (\log(\abs{a_{h}}_{v})).
\end{align*}

$\bullet$ Supposons que $v\in M_{\infty,K}$. On a alors
\[
\abs{P(x)}_{v} \le \sharp H \max_{h=(h_{1},\dotsc,h_{n})\in H}\Big(\abs{a_{h}}_{v} \prod_{i=1}^n \abs{x_{i}}_{v}^{h_{i}}\Big),
\]
d'o\`u il d\'ecoule que 
\[\log(\abs{P(x)}_{v})\le \max_{h\in H} (\abs{h})\, m_{x} + \max_{h\in H} (\log(\abs{a_{h}}_{v})) + \log(\sharp H).\]
\end{proof}

Le r\'esultat de simplification promis en d\'ecoule.

\begin{lemm}\label{lem:majorationg1}
Supposons qu'il existe une famille finie $F_{1} = (F_{1,i})_{i\in I_{1}}$ d'\'el\'ements inversibles  de~$\cO(\cY)$ telle que 
\[f_{1} = \max_{i\in I_{1}} (\abs{\log(\abs{F_{1,i}})}),\]
ainsi qu'une famille finie $(P_{j})_{j\in J_{1}}$ d'\'el\'ements de $\Q[T_{i},T_{i}^{-1}, i\in I_{1}]$ telle que
\[g_{1} \le \max_{j\in J_{1}}(\log(\abs{P_{j}(F_{1})})).\]
Alors l'hypoth\`ese~ii) figurant dans l'\'enonc\'e du th\'eor\`eme~\ref{thm:minoration} est satisfaite.
\qed
\end{lemm}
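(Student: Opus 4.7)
La strat\'egie est une application directe du lemme~\ref{lem:logPj}, suivie d'une recombinaison par maximum sur l'ensemble fini~$J_{1}$. Plus pr\'ecis\'ement, pour chaque $j\in J_{1}$, j'appliquerais le lemme~\ref{lem:logPj} avec $K=\Q$ au polyn\^ome $P_{j}\in \Q[T_{i}^{\pm 1}, i\in I_{1}]$ pour obtenir une constante $A_{j}\in \R_{>0}$ et une famille presque nulle $(B_{j,v})_{v\in M_{\Q}}$ telles que
\[\log(\abs{P_{j}(x)}_{v}) \le A_{j}\, \max_{i\in I_{1}}(\abs{\log(\abs{x_{i}}_{v})}) + B_{j,v}\]
pour toute place~$v\in M_{\Q}$ et tout $x\in (\Q_{v}^\ast)^{I_{1}}$. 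En posant $A := \max_{j\in J_{1}} A_{j}$ et $B_{v} := \max_{j\in J_{1}} B_{j,v}$, la finitude de~$J_{1}$ entra\^ine imm\'ediatement que la famille $(B_{v})_{v\in M_{\Q}}$ ainsi obtenue est presque nulle.

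Le seul point \`a v\'erifier est que ces in\'egalit\'es, \'etablies \emph{a priori} pour des points $x\in (\Q_{v}^\ast)^{I_{1}}$, restent valables apr\`es \'evaluation en un point~$y\in V\cap \pr^{-1}(a_{v})$ de l'espace de Berkovich, dont le corps r\'esiduel compl\'et\'e~$\cH(y)$ est une extension valu\'ee compl\`ete de~$\Q_{v}$. Ce transport ne pose pas de difficult\'e, car la valeur absolue~$\va_{v}$ s'\'etend de mani\`ere unique aux extensions alg\'ebriques (puis, par continuit\'e, aux extensions compl\`etes) et les coefficients des~$P_{j}$ sont rationnels, de sorte que les constantes $A_{j}$ et~$B_{j,v}$ sont pr\'eserv\'ees lors du passage \`a~$\cH(y)$. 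L'inversibilit\'e des~$F_{1,i}$ sur~$\cY$ assure par ailleurs que $F_{1,i}(y)\ne 0$ pour tout~$y\in V$ et tout $i\in I_{1}$, si bien que le tuple $(F_{1,i}(y))_{i\in I_{1}}$ se trouve dans le domaine de validit\'e de l'in\'egalit\'e.

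En substituant ce tuple et en prenant le maximum sur~$j\in J_{1}$, on obtient, \`a l'aide des \'egalit\'es $f_{1}(y) = \max_{i\in I_{1}}(\abs{\log(\abs{F_{1,i}(y)})})$ et $g_{1}(y) \le \max_{j\in J_{1}}(\log(\abs{P_{j}(F_{1}(y))}))$ fournies par l'\'enonc\'e, la majoration $g_{1}(y) \le A\, f_{1}(y) + B_{v}$, c'est-\`a-dire pr\'ecis\'ement l'hypoth\`ese~ii) du th\'eor\`eme~\ref{thm:minoration}. L'essentiel du travail technique \'etant absorb\'e par le lemme~\ref{lem:logPj}, la d\'emonstration ne pr\'esente aucun obstacle s\'erieux~; la seule subtilit\'e, mineure, concerne le transport des in\'egalit\'es aux corps r\'esiduels compl\'et\'es \'evoqu\'es ci-dessus.
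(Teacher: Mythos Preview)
Your proposal is correct and follows exactly the approach the paper intends: the lemma is marked with a \qed{} and no proof, signalling that it is an immediate consequence of lemme~\ref{lem:logPj}. Your identification of the only subtlety --- transporting the inequality from $(\Q_v^\ast)^{I_1}$ to points of the Berkovich fibre over~$a_v$, whose completed residue fields $\cH(y)$ may be larger --- is well taken, and your resolution (the constants in the proof of lemme~\ref{lem:logPj} depend only on the rational coefficients of the~$P_j$ and on the place~$v$, hence remain valid over any complete valued extension of~$\Q_v$) is the right one.
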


Pour conclure cette section, nous proposons un exemple d'utilisation du lemme~\ref{lem:extensionmajoration} pour obtenir une majoration.

\begin{prop}\label{prop:majoration}
Soit $V$ une partie flottante de~$Y$ et supposons qu'il existe $N_{V}\in \Z$ tel que $\pr(V)=\cU_{N_{V}}$. Soit $\cE \colon V \to \R_{\ge 0}$ une fonction $\log$-flottante continue. Soit $(F_{i})_{i\in I}$ une famille finie d'\'el\'ements inversibles de~$\cO(V)$. Pour $r \in \R_{>0}$, posons 
\[ W_{r} := \{y \in V : \max_{i\in I} (\abs{F_{i}(y)},\abs{F_{i}^{-1}(y)}) = r\}\]
et
\[W'_{r} := \{ y\in V : \min_{i\in I}(\abs{F_{i}(y)}) < r^{-1} \textrm{ ou } \max_{i\in I}(\abs{F_{i}(y)}) > r\}.\]
Supposons qu'il existe $r_{0} \in \R_{>1}$ tel que l'application $\pr_{\vert W_{r_{0}}} \colon W_{r_{0}} \to \cU_{N_{V}}$ soit ferm\'ee et $e_{0} \in \R_{\ge 0}$ tel que 
\[ \forall y\in W_{r_{0}} \cap \pr^{-1}(a_{0}),\ \cE(y) \le e_{0}.\]
Alors, pour tout $\alpha \in \R_{>0}$, il existe $s_{\alpha} \in \R_{>1}$ et un ensemble fini~$M_{\alpha}$ de places de~$\Q$ tels que
\[\forall v \in M_{\Q} \setminus M_{\alpha},\ \forall y\in W'_{1} \cap \pr^{-1}(a_{v}),\ \cE(y) \le \frac{e_{0} + \alpha}{\log(r_{0})}\, \max_{i\in I}\big(\abs{\log(\abs{F_{i}(y)})}\big)\]
et 
\[\forall v \in M_{\alpha},\ \forall y\in W'_{s_{\alpha}} \cap \pr^{-1}(a_{v}),\ \cE(y) \le\, \frac{e_{0} + \alpha}{\log(r_{0})}\, \max_{i\in I}\big(\abs{\log(\abs{F_{i}(y)})}\big).\]
\end{prop}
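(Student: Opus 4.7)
The plan is to adapt the proof of Lemma~\ref{lem:extensionmajoration} to the majoration setting. First I introduce the $\log$-flottante continuous function $f_{1}(y) := \max_{i\in I}(\abs{\log(\abs{F_{i}(y)})})$, so that $W_{r} = \{y\in V : f_{1}(y)=\log(r)\}$ and $W'_{r} = \{y\in V : f_{1}(y)>\log(r)\}$. The central hypothesis then reads $\cE \le e_{0}$ on $\{f_{1}=\log(r_{0})\} \cap \pr^{-1}(a_{0})$, and the target is the affine majoration $\cE \le \frac{e_{0}+\alpha}{\log(r_{0})}\, f_{1}$ outside a compact controlled part of each fibre.

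Fix $\alpha\in\R_{>0}$. I would consider the open subset $W := \{y\in W_{r_{0}} : \cE(y) < e_{0}+\alpha\}$, which contains $W_{r_{0}} \cap \pr^{-1}(a_{0})$ by the central hypothesis. Since $\pr_{\vert W_{r_{0}}}$ is closed, the image $\pr(W_{r_{0}}\setminus W)$ is closed in $\cU_{N_{V}}$, so $B_{\alpha} := \cU_{N_{V}} \setminus \pr(W_{r_{0}}\setminus W)$ is an open neighbourhood of $a_{0}$ satisfying $\pr^{-1}(B_{\alpha}) \cap W_{r_{0}} \subset W$. By the explicit adelic description of $\cM(\Z)$ recalled in Section~\ref{sec:BerkovichZ}, there exist a finite set $M_{\alpha} \subset M_{\Q}$ containing $\infty$ and $\eps_{\alpha} \in \intof{0,1}$ such that
\[ B_{\alpha} \supset \bigcup_{v\in M_{\Q}\setminus M_{\alpha}} \intfo{a_{0},a_{v}^{+\infty}} \cup \bigcup_{v\in M_{\alpha}} \intfo{a_{0},a_{v}^{\eps_{\alpha}}}. \]
I then set $s_{\alpha} := r_{0}^{1/\eps_{\alpha}} \in \R_{>1}$.

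Given $v\in M_{\Q}$ and $y\in \pr^{-1}(a_{v})$ with $f_{1}(y)>0$ (requiring moreover $f_{1}(y)>\log(s_{\alpha})$ when $v\in M_{\alpha}$), set $\eps := \log(r_{0})/f_{1}(y)$. By floating of $f_{1}$ one has $f_{1}(y^\eps) = \log(r_{0})$, so $y^\eps\in W_{r_{0}}$, and $\pr(y^\eps)=a_{v}^\eps$ lies in $B_{\alpha}$: if $v\notin M_{\alpha}$ then $v$ is finite, the whole ultrametric branch belongs to $B_{\alpha}$ and no upper bound on $\eps$ is needed; if $v\in M_{\alpha}$, the choice of $s_{\alpha}$ forces $\eps<\eps_{\alpha}\le 1$, so again $\pr(y^\eps)\in B_{\alpha}$. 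Hence $y^\eps\in W$ and $\cE(y^\eps)<e_{0}+\alpha$; applying the $\log$-flottante property of $\cE$ yields $\cE(y)=\cE(y^\eps)/\eps\le (e_{0}+\alpha)f_{1}(y)/\log(r_{0})$, which is the claimed bound. The technical point requiring care is exactly the availability of the operation $y\mapsto y^\eps$ for $\eps>1$: on non-archimedean branches it extends to all positive exponents, while on the archimedean branch it is defined only for $\eps\in\intof{0,1}$. This is precisely why $\infty$ must be included in $M_{\alpha}$ and $s_{\alpha}$ chosen large enough to force $\eps\le 1$ above archimedean points, which is the single genuine difference in mechanics with the proof of Lemma~\ref{lem:extensionmajoration}.
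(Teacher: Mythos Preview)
Your argument is correct and is essentially the proof of Lemma~\ref{lem:extensionmajoration} run in reverse (for a majoration rather than a minoration). The paper's proof is a one-line reduction: it simply applies Lemma~\ref{lem:extensionmajoration} with $\cE$ replaced by $-\cE$, $f_{1}=\max_{i\in I}(\abs{\log\abs{F_{i}}})$, $f_{2}=0$, $s_{0}=\log(r_{0})$ and $e_{0}$ replaced by $-e_{0}$; the conclusion of the lemma, $-\cE(y)\ge \frac{-e_{0}-\alpha}{\log(r_{0})}f_{1}(y)$, is exactly the desired bound after sign change, and $t_{\alpha}=\log(s_{\alpha})$. So rather than redoing the flow argument, you could have invoked the lemma directly. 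Your remark about the availability of $y\mapsto y^{\eps}$ for $\eps>1$ on the non-archimedean branches is pertinent, but it is not a difference with Lemma~\ref{lem:extensionmajoration}: the same issue already arises there (the proof of that lemma also sets $\eps=s_{0}/f_{1}(y)$, which may exceed $1$), and it is handled in the same way by forcing $\infty\in M_{\alpha}$.
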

\begin{proof}
Soit~$\alpha\in \R_{> 0}$. Le r\'esultat d\'ecoule du lemme~\ref{lem:extensionmajoration} appliqu\'e avec $\cE = -\cE$, $f_{1} = \max_{i\in I} (\abs{\labs{F_{i}}})$, $f_{2}=0$, $V = V$, $s_{0} = \log(r_{0})$ et $e_{0} = - e_{0}$, $\alpha = \alpha$. 
\end{proof}

Une fois ce r\'esultat appliqu\'e, pour obtenir une majoration globale de la fonction~$\cE$, il reste \`a traiter le cas de \og bonne r\'eduction \fg{} ($\abs{F_{i}}=1$ pour tout $i$), qui est g\'en\'eralement plus simple, et celui des compacts $\{ s_{\alpha} \le \abs{F_{i}} \le s_{\alpha}^{-1}\} \cap \pr^{-1}(a_{v})$, pour $v$ appartenant \`a l'ensemble fini de places $M_{\alpha}$, o\`u l'on sait qu'une majoration existe \emph{a priori}.

\section{Minoration de l'\'energie mutuelle globale}\label{sec:minoration}

Dans cette section, nous combinons les r\'esultats de la section~\ref{sec:fibrecentralehauteurs} aux estimations sur la fibre centrale de la section~\ref{sec:segmentsegment} pour obtenir une minoration de l'\'energie mutuelle globale. Nous reprenons les notations de la section~\ref{sec:Lattesfamille}. 

Rappelons que, d'apr\`es la proposition~\ref{prop:mugamma}, si $z \in \pr^{-1}(a_{0})$, les mesures $\mu_{a(z)}$ et $\mu_{b(z)}$ s'identifient \`a des mesures de Lebesgue sur des segments. En particulier, on peut calculer leur \'energie mutuelle en utilisant les r\'esultats de la section~\ref{sec:segmentsegment}.

\medbreak

Le but de cette section est de d\'emontrer une minoration de l'\'energie mutuelle globale~$h_{\cE_{ab}}$ sur~$\cY_{ab}'(\Qbar)$. Dans un premier temps, nous minorerons~$h_{\cE_{ab}}$ par une hauteur.

\begin{nota}
Pour $Q\in \cY_{ab}'(\Qbar)$, posons
\begin{align*} 
h_{ab}([Q]) &:= h([a_{1}(Q) \mathbin: a_{2}(Q) \mathbin: a_{3}(Q) \mathbin: b_{1}(Q) \mathbin: b_{2}(Q) \mathbin: b_{3}(Q)])\\
& = \sum_{q\in L(Q)} \max_{1\le i,j\le 3}(\labs{a_{i}(q)},\labs{b_{j}(q)})\\
&=  \sum_{q\in L(Q)} \max_{1\le i,j\le 3}\Big(0,\biglabs{\frac{a_{i}(q)}{b_{1}(q)}},\biglabs{\frac{b_{j}(q)}{b_{1}(q)}}\Big)
\end{align*}
\end{nota}

Pour des raisons pratiques, posons 
\[(u_{1},u_{2},u_{3},u_{4},u_{5},u_{6}) := (a_{1},a_{2},a_{3},b_{1},b_{2},b_{3}).\]

\begin{lemm}\label{lem:hf1}
Consid\'erons la fonction de~$Y_{ab}'$ dans~$\R$ d\'efinie par
\[ f_{1} := \max_{i\ne j} \Big( \bigabs{\biglabs{\frac{u_{i}}{u_{j}}}}, \bigabs{\biglabs{\frac{u_{i}}{u_{j}}-1}}\Big).\]
Il existe $C_{1},D_{1} \in \R_{>0}$ telles que, pour tout $Q\in \cY_{ab}'(\Qbar)$, on ait
\[h_{f_{1}}(Q) \le C_{1}\, h_{ab}([Q]) + D_{1}.\]
\end{lemm}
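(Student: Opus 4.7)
The plan is to apply Lemma~\ref{lem:hauteurva} directly to the finite family $(G_k)_{1\le k\le N}$ consisting of the rational functions $u_i/u_j$ and $(u_i-u_j)/u_j$ on $\cY'_{ab}$, for all $i\ne j$ in $\{1,\ldots,6\}$. Fix $Q\in\cY'_{ab}(\Qbar)$ lying outside the Zariski-closed locus $\{u_i=u_j\}$ (where some cross-coordinate vanishes), so that each $G_k(Q)$ belongs to $K^*$ with $K:=\kappa(Q)$. For each place $w$ of $K$ corresponding to a point $q_w\in L(Q)$, one has
\[
f_1(q_w)=\max_k \bigl|\log \lvert G_k(Q)\rvert_{w}\bigr|,
\]
so Lemma~\ref{lem:hauteurva} immediately yields
\[
h_{f_1}(Q)=\sum_{w\in M_K}N_w\, f_1(q_w)\le (N+1)\,h\bigl(G_1(Q),\ldots,G_N(Q)\bigr).
\]

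What remains is to bound the right-hand side by $h_{ab}([Q])$. Using the sub-additivity $h([1:x_1:\cdots:x_N])\le\sum_k h(x_k)$, the problem reduces to estimating each $h(G_k(Q))$ separately. For $G_k=u_i/u_j$, one has $h(G_k(Q))=h([u_i(Q):u_j(Q)])\le h_{ab}([Q])$, since the maximum over two of the six coordinates is dominated by the maximum over all six. For $G_k=(u_i-u_j)/u_j$, one has $h(G_k(Q))=h([u_j(Q):u_i(Q)-u_j(Q)])$, and the triangle inequality $\lvert u_i-u_j\rvert_v\le c_v\max(\lvert u_i\rvert_v,\lvert u_j\rvert_v)$, with $c_v=1$ at finite places and $c_v=2$ at archimedean ones, combined with $\sum_{v\mid\infty}N_v=1$, gives $h(G_k(Q))\le h_{ab}([Q])+\log 2$.

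Combining these bounds produces
\[
h_{f_1}(Q)\le N(N+1)\,\bigl(h_{ab}([Q])+\log 2\bigr),
\]
so one may take $C_1:=N(N+1)$ and $D_1:=N(N+1)\log 2$. No serious obstacle appears in this argument, which is essentially a direct manipulation of heights via Lemma~\ref{lem:hauteurva} together with standard sub-additivity estimates. The only point of vigilance is the locus where some cross-pair satisfies $u_i=u_j$, on which $f_1$ takes the value $+\infty$; this is a proper Zariski-closed subvariety of $\cY'_{ab}$ and can be safely excluded, the inequality being meaningful and true on the useful open complement where every $G_k$ is invertible.
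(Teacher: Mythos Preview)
Your proof is correct and follows essentially the same approach as the paper: both begin by applying Lemma~\ref{lem:hauteurva} to the family $\{u_i/u_j,\ (u_i-u_j)/u_j\}_{i\ne j}$ to bound $h_{f_1}(Q)$ by a multiple of the affine height of this tuple, and then bound that height by $h_{ab}([Q])$. The only difference is in this second step --- you use sub-additivity of the height together with the termwise inequality $h([u_i:u_j])\le h([u_1:\cdots:u_6])$, whereas the paper normalises everything by $b_1$, estimates pointwise, and invokes Lemma~\ref{lem:hauteurva} a second time; both routes are equally elementary and yield the same conclusion (with different explicit constants).
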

\begin{proof}
Soit $Q\in \cY_{ab}'(\Qbar)$. D'apr\`es le lemme~\ref{lem:hauteurva}, on a 
\[ h_{f_{1}}(Q) \le 61 \sum_{q\in L(Q)} N_q \max_{i\ne j}\big(0, \biglabs{\frac{u_{i}(q)}{u_{j}(q)}},\biglabs{\frac{u_{i}(q)}{u_{j}(q)}-1}\big).\]
Soient $i,j \in \cn{1}{6}$ avec $i\ne j$. Pour tout $q\in Y'_{ab}$, on a 
\begin{align*}
\biglabs{\frac{u_{i}(q)}{u_{j}(q)}} &\le  \biglabs{\frac{u_{i}(q)}{b_{1}(q)}} -  \biglabs{\frac{u_{j}(q)}{b_{1}(q)}}\\ 
& \le 2 \max\big( \bigabs{\biglabs{\frac{u_{i}(q)}{b_{1}(q)}}},\bigabs{\biglabs{\frac{u_{j}(q)}{b_{1}(q)}}}\big).
\end{align*}
Pour tout $q\in Y_{ab}'$, on a \'egalement
\begin{align*}
\biglabs{\frac{u_{i}(q)}{u_{j}(q)}-1} &\le \log\big(\bigabs{\frac{u_{i}(q)}{u_{j}(q)}}+1\big)\\ 
& \le \log\Big( 2 \max\big(\bigabs{\frac{u_{i}(q)}{u_{j}(q)}},0 \big) \Big)\\
& \le \log(2) + 2 \max\Big( \Bigabs{\biglabs{\frac{u_{i}(q)}{b_{1}(q)}}},\Bigabs{\biglabs{\frac{u_{j}(q)}{b_{1}(q)}}}\Big).
\end{align*}
On en d\'eduit que 
\[ h_{f_{1}}(Q) \le 61\log(2) +  122 \sum_{q\in L(Q)} N_q \max_{1\le i\le 6}\big(\bigabs{\biglabs{\frac{u_{i}(q)}{b_{1}(q)}}}\big)\]
et on conclut par le lemme~\ref{lem:hauteurva}.
\end{proof}

\begin{lemm}\label{lem:E0smax}
Soit $\rho \in \intfo{1,+\infty}$. Consid\'erons les fonctions de~$Y_{ab}'$ dans~$\R$ d\'efinies par
\[f_{2} := \max\big( 0 ,  \smax_{1\le i,j\le 3}\big(\biglabs{\frac{a_{i}}{b_{j}}}\big) \big)\]
et 
\[g_{1} := \max_{i,j,k \textrm{ distincts}} \Big(\biglabs{\frac{a_{i}-a_{j}}{a_{i}-a_{k}}}, \biglabs{\frac{b_{j}}{b_{k}}\, \frac{b_{k}-b_{i}}{b_{j}-b_{i}}}\Big).\]
Soit $q\in Y_{ab}' \cap \pi^{-1}(a_{0})$ tel que $g_{1}(q) \le \rho f_{2}(q)$. Alors, on a 
\[\cE_{ab}(q) \ge \frac{1}{48\rho^2} \, f_{2}(q).\]
\end{lemm}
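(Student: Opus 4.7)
The approach is to reduce, via the central-fibre interpretation, to the explicit segment-vs-segment energy calculations of Section~\ref{sec:segmentsegment}, and then to apply Corollary~\ref{cor:Eqrho} or Theorem~\ref{thm:Eintersectionvide} depending on whether the two associated segments overlap. Since $q\in \pi^{-1}(a_{0})$, the completed residue field $\cH(q)$ carries an ultrametric absolute value extending the trivial one on~$\Q$ (with residue characteristic different from~$2$). Applying Proposition~\ref{prop:mugamma} to the quadruples $(a_{1}(q),a_{2}(q),a_{3}(q),\infty)$ and $(b_{1}(q),b_{2}(q),b_{3}(q),0)$ identifies $\mu_{a(q)}$ and $\mu_{b(q)}$ with the canonical Lebesgue measures on segments $I_{a},I_{b}\in \Seg_{\cH(q)}$, so that $\cE_{ab}(q) = E(I_{a},I_{b})$. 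I next interpret $g_{1}(q)$ geometrically: because one of the four ramification points of each Latt\`es configuration is $\infty$ (resp.~$0$), the six essentially distinct cross-ratio moduli of $(a_{1},a_{2},a_{3},\infty)$ simplify to the expressions $|(a_{i}-a_{j})/(a_{i}-a_{k})|$ for $i,j,k$ distinct in $\{1,2,3\}$, and similarly the cross-ratios of $(b_{1},b_{2},b_{3},0)$ become the expressions $|b_{j}(b_{k}-b_{i})/(b_{k}(b_{j}-b_{i}))|$ appearing in $g_{1}$. By Lemma~\ref{lem:longueurIgamma}, setting $\ell_{a}:=\ell(I_{a})$ and $\ell_{b}:=\ell(I_{b})$, this gives $g_{1,a}(q)=\ell_{a}$ and $g_{1,b}(q)=\ell_{b}$, so the hypothesis reads $\max(\ell_{a},\ell_{b})\le \rho\, f_{2}(q)$.

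From here I would split into two cases. If $I_{a}\cap I_{b}$ has positive length $\ell_{ab}>0$, I apply Corollary~\ref{cor:Eqrho} with $\lambda:=f_{2}(q)$ and $\varrho:=\rho$: the second hypothesis of that corollary is exactly the translated hypothesis of the lemma, and its conclusion is the desired bound $E(I_{a},I_{b})\ge f_{2}(q)/(48\rho^{2})$, provided the first hypothesis $f_{2}(q)\le \max(\ell_{a}-\ell_{ab},\ell_{b}-\ell_{ab})$ is checked. If instead $|I_{a}\cap I_{b}|\le 1$, so that the segments are disjoint or meet at a single point, Theorem~\ref{thm:Eintersectionvide} gives
\[ E(I_{a},I_{b}) \ge \tfrac{1}{24}\ell_{a}+\tfrac{1}{24}\ell_{b}+\tfrac{1}{2}d_{ab}, \]
where $d_{ab}$ denotes the Berkovich-tree distance between $I_{a}$ and $I_{b}$; combined with the analogous inequality $f_{2}(q)\le \max(\ell_{a},\ell_{b},d_{ab})$ and the assumption $\rho\ge 1$, this again yields $E(I_{a},I_{b})\ge f_{2}(q)/(48\rho^{2})$.

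The main obstacle is therefore the unified geometric inequality
\[ f_{2}(q)\ \le\ \max(\ell_{a}-\ell_{ab},\ell_{b}-\ell_{ab},d_{ab}), \]
with the convention that $\ell_{ab}=0$ when $I_{a}$ and $I_{b}$ do not properly overlap and $d_{ab}=0$ when they meet. This is a purely tree-theoretic assertion about the relative positions of the eight marked points $a_{i}(q),b_{j}(q),0,\infty$ on the Berkovich projective line over~$\cH(q)$. The sub-maximum appearing in the definition of $f_{2}$ is what makes the inequality tight: when a single pair $(i_{0},j_{0})$ realises the maximum of $\log|a_{i}/b_{j}|(q)$, the second-largest value witnesses either that $I_{a}$ extends beyond $I_{b}$ towards~$\infty$ via an excess $a_{i}$ of large modulus, or that $I_{b}$ extends beyond $I_{a}$ towards~$0$ via a deficit $b_{j}$ of small modulus, or that the two segments are genuinely separated by a distance $d_{ab}\ge f_{2}(q)$. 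I expect the case-by-case verification of this inequality, organised by the orderings of the moduli $(|a_{i}(q)|)_{i}$ and $(|b_{j}(q)|)_{j}$ and by the combinatorial shape of the two associated 4-point trees, to be the most delicate part of the argument.
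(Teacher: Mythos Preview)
Your approach coincides with the paper's: reduce to the segment picture over the trivially valued fibre, identify $g_{1}(q)$ with $\max(\ell_{a},\ell_{b})$ via Lemma~\ref{lem:longueurIgamma}, split into the disjoint and intersecting cases, and invoke Theorem~\ref{thm:Eintersectionvide} and Corollary~\ref{cor:Eqrho} respectively. Two points deserve comment.

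First, the inequality you state in the disjoint case, $f_{2}(q)\le\max(\ell_{a},\ell_{b},d_{ab})$, fails in general. After ordering $|a_{1}|\le|a_{2}|\le|a_{3}|$ and $|b_{1}|\le|b_{2}|\le|b_{3}|$, in the generic situation one has $I_{a}=[\eta_{0,|a_{2}|},\eta_{0,|a_{3}|}]$ and $I_{b}=[\eta_{0,|b_{1}|},\eta_{0,|b_{2}|}]$ on the axis $[0,\infty]$; when these are disjoint with $I_{a}$ above $I_{b}$ one finds $f_{2}(q)=\log|a_{3}/b_{2}|=\ell_{a}+d_{ab}$, which can exceed each of $\ell_{a},\ell_{b},d_{ab}$. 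The fix is immediate: the weaker bound $f_{2}(q)\le \ell_{a}+\ell_{b}+d_{ab}$ does hold and already gives
\[
E(I_{a},I_{b})\ \ge\ \tfrac{1}{24}\ell_{a}+\tfrac{1}{24}\ell_{b}+\tfrac{1}{2}d_{ab}\ \ge\ \tfrac{1}{24}f_{2}(q)\ \ge\ \tfrac{1}{48\rho^{2}}f_{2}(q).
\]
In the intersecting case your inequality $f_{2}(q)\le\max(\ell_{a}-\ell_{ab},\ell_{b}-\ell_{ab})$ is correct and is exactly what the paper establishes (phrased as the existence of an interval $J\subset I_{a}\setminus I_{b}$ of length at least $f_{2}(q)$).

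Second, the paper sidesteps the case-by-case tree analysis you anticipate by two preliminary normalisations: order the moduli as above, and then use the involution $t\mapsto t^{-1}$ (which swaps the roles of $(a_{1},a_{2},a_{3},\infty)$ and $(b_{1},b_{2},b_{3},0)$ and exchanges $\log|a_{3}/b_{2}|$ with $\log|a_{2}/b_{1}|$) to reduce to the single case $f_{2}(q)=\log|a_{3}(q)/b_{2}(q)|>0$. After this reduction the two geometric inequalities become a direct reading of the configuration of the two segments relative to the axis $[0,\infty]$, with no further combinatorics.
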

\begin{proof}
On peut supposer que $\abs{a_{3}(q)}\ge \abs{a_{2}(q)}\ge \abs{a_{1}(q)}$ et $\abs{b_{3}(q)}\ge\abs{b_{2}(q)}\ge \abs{b_{1}(q)}$. On a alors $f_{2}(q) = \max\big(0,\log(\abs{\frac{a_{3}(q)}{b_{2}(q)}}),\log(\abs{\frac{a_{2}(q)}{b_{1}(q)}})\big)$. 

Rappelons que l'\'energie commune est invariante par changement de coordonn\'ees sur~$\P^1$. Le changement de~$t$ en~$t^{-1}$ a pour effet de remplacer $(a_{1}(q),a_{2}(q),a_{3}(q),\infty)$ par $(b_{3}^{-1}(q), b_{2}^{-1}(q), b_{1}^{-1}(q),0)$ et $(b_{1}(q),b_{2}(q),b_{3}(q),0)$ par $(a_{3}^{-1}(q), a_{2}^{-1}(q), a_{1}^{-1}(q),0)$. En particulier, $\frac{a_{3}(q)}{b_{2}(q)}$ et $\frac{a_{2}(q)}{b_{1}(q)}$ sont \'echang\'es par cette op\'eration. On peut donc supposer que $f_{2}(q) = \max(0,\log(\abs{\frac{a_{3}(q)}{b_{2}(q)}}))$. 

On peut \'egalement supposer que $f_{2}(q) >0$, autrement dit, que $\abs{a_{3}(q)} > \abs{b_{2}(q)}$.

\medbreak

Avec la notation~\ref{nota:Igamma}, consid\'erons les segments de~$\EP{1}{k}$ suivants~: $I_{1} := I_{(a_{1}(q),a_{2}(q),a_{3}(q),\infty)}$ et $I_{2} := I_{(b_{1}(q),b_{2}(q),b_{3}(q),0)}$. L'interpr\'etation de leur longueur en termes de birapport (\cf~lemme~\ref{lem:longueurIgamma}) assure que l'on a
\[ \max(\ell(I_{1}),\ell(I_{2})) \le g_{1}(q) \le \rho f_{2}(q).\]

\medbreak

$\bullet$ Supposons que $I_{1}\cap I_{2} = \emptyset$.

Notons $d_{12}$ la distance de~$I_{1}$ \`a~$I_{2}$. On a alors $\ell_{1} + \ell_{2} + d_{12} \ge \log(\abs{\frac{a_{3}(q)}{b_{2}(q)}})$, \cf~figure~\ref{fig:energiedisjoints}. En utilisant le th\'eor\`eme~\ref{thm:Eintersectionvide}, on en d\'eduit que
\[\cE_{ab}(q) \ge \frac1{24}\,  \biglabs{\frac{a_{3}(q)}{b_{2}(q)}} \ge \frac{1}{48\rho^2} \, f_{2}(q).\]

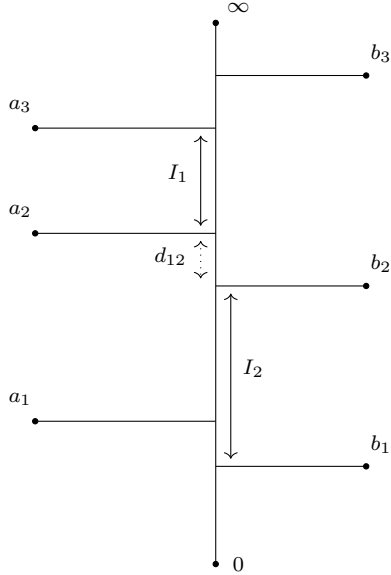
\begin{figure}[!h]
\centering
\begin{tikzpicture}
\draw  (-0.4,4)-- (-0.4,-3.2);
\draw  (-0.4,1.2)-- (-2.8,1.2);
\draw  (-0.4,2.6)-- (-2.8,2.6);
\draw  (-0.4,-1.3)-- (-2.8,-1.3);
\draw  (-0.4,0.5)-- (1.6,0.5);
\draw  (-0.4,3.3)-- (1.6,3.3);
\draw  (-0.4,-1.9)-- (1.6,-1.9);
\draw[<->]  (-0.6,2.5)-- (-0.6,1.3);
\draw[<->]  (-0.2,0.4)-- (-0.2,-1.8);
\draw[<->,dotted]  (-0.6,1.1)-- (-0.6,0.6);
\begin{scriptsize}
\draw [fill=black] (-0.4,4) circle (1pt);
\draw[color=black] (-0.1,4.2) node {$\infty$};
\draw [fill=black] (-0.4,-3.2) circle (1pt);
\draw[color=black] (-0.1,-3.2) node {0};
\draw [fill=black] (-2.8,1.2) circle (1pt);
\draw[color=black] (-3,1.5) node {$a_2$};
\draw [fill=black] (-2.8,2.6) circle (1pt);
\draw[color=black] (-3,2.9) node {$a_3$};
\draw [fill=black] (-2.8,-1.3) circle (1pt);
\draw[color=black] (-3,-1) node {$a_1$};
\draw [fill=black] (1.6,0.5) circle (1pt);
\draw[color=black] (1.8,0.8) node {$b_2$};
\draw [fill=black] (1.6,3.3) circle (1pt);
\draw[color=black] (1.8,3.6) node {$b_3$};
\draw [fill=black] (1.6,-1.9) circle (1pt);
\draw[color=black] (1.8,-1.6) node {$b_1$};
\draw[color=black] (-0.9,2) node {$I_1$};
\draw[color=black] (0.1,-0.6) node {$I_2$};
\draw[color=black] (-1,.9) node {$d_{12}$};
\end{scriptsize}\end{tikzpicture}
\caption{Segments~$I_{1}$ et $I_{2}$ disjoints.}\label{fig:energiedisjoints}
\end{figure}

\medbreak

$\bullet$ Supposons que $I_{1}\cap I_{2} \ne \emptyset$.

Alors, $I_{1} \setminus I_{2}$ contient un intervalle~$J$ de longueur sup\'erieure ou \'egale \`a $\log(\abs{\frac{a_{3}(q)}{b_{2}(q)}})$, \cf~figure~\ref{fig:energieintersection}. D'apr\`es le corollaire~\ref{cor:Eqrho} appliqu\'e avec $\lambda =  f_{2}(q)$, on a donc 
\[\cE_{ab}(q)\ge  \frac{1}{48\rho^2} \, f_{2}(q) .\]

\begin{figure}[!h]
\centering
\begin{tikzpicture}
\draw  (-0.4,4)-- (-0.4,-3.2);
\draw  (-0.4,-.2)-- (-2.8,-.2);
\draw  (-0.4,2.6)-- (-2.8,2.6);
\draw  (-0.4,-1.3)-- (-2.8,-1.3);
\draw  (-0.4,0.5)-- (1.6,0.5);
\draw  (-0.4,3.3)-- (1.6,3.3);
\draw  (-0.4,-1.9)-- (1.6,-1.9);
\draw[<->]  (-0.6,2.5)-- (-0.6,-.1);
\draw[<->]  (-0.2,0.4)-- (-0.2,-1.8);
\draw[<->]  (-0.2,0.6)-- (-0.2,2.5);
\begin{scriptsize}
\draw [fill=black] (-0.4,4) circle (1pt);
\draw[color=black] (-0.1,4.1) node {$\infty$};
\draw [fill=black] (-0.4,-3.2) circle (1pt);
\draw[color=black] (-0.1,-3.2) node {0};
\draw [fill=black] (-2.8,-.2) circle (1pt);
\draw[color=black] (-3,0.1) node {$a_2$};
\draw [fill=black] (-2.8,2.6) circle (1pt);
\draw[color=black] (-3,2.9) node {$a_3$};
\draw [fill=black] (-2.8,-1.3) circle (1pt);
\draw[color=black] (-3,-1) node {$a_1$};
\draw [fill=black] (1.6,0.5) circle (1pt);
\draw[color=black] (1.8,.8) node {$b_2$};
\draw [fill=black] (1.6,3.3) circle (1pt);
\draw[color=black] (1.8,3.6) node {$b_3$};
\draw [fill=black] (1.6,-1.9) circle (1pt);
\draw[color=black] (1.8,-1.6) node {$b_1$};
\draw[color=black] (-0.9,1.2) node {$I_1$};
\draw[color=black] (0.1,-0.7) node {$I_2$};
\draw[color=black] (0.1,1.7) node {$J$};
\end{scriptsize}\end{tikzpicture}
\caption{Segments~$I_{1}$ et $I_{2}$ qui se rencontrent.}\label{fig:energieintersection}
\end{figure}
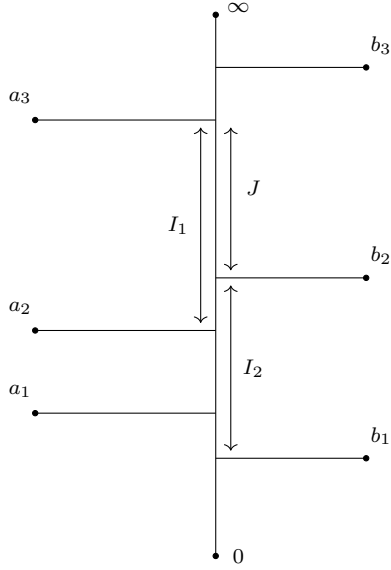
\end{proof}

\begin{lemm}\label{lem:hf2}
Avec les notations du lemme~\ref{lem:E0smax}, il existe $C_{2} \in \R_{>0}$ telle que, pour tout $Q\in \cY_{ab}'(\Qbar)$, on ait
\[h_{f_{2}}(Q) \ge C_{2}\, h_{ab}([Q]).\]
\end{lemm}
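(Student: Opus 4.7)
The plan is to combine a place-by-place lower bound on $f_{2}$ with a global argument based on the product formula. Since both $f_{2}$ and $h_{ab}([\,\cdot\,])$ are invariant under global rescaling of the projective tuple $[a_{1}\mathbin:\dotsb\mathbin:b_{3}]$, we fix a representative of~$Q$ over a number field~$K$ and analyse the local contribution $f_{2}(q)$ at each $q\in L(Q)$.

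At each such $q$, after reordering the indices locally so that $\abs{a_{(1)}(q)}\ge \abs{a_{(2)}(q)}\ge \abs{a_{(3)}(q)}$ and similarly for $b_{(k)}(q)$, the sub-max of the nine values $\labs{a_{i}/b_{j}}$ is simply the second-largest, giving the explicit formula
\[
f_{2}(q) = \max\Big(0,\ \biglabs{\tfrac{a_{(1)}(q)}{b_{(2)}(q)}},\ \biglabs{\tfrac{a_{(2)}(q)}{b_{(3)}(q)}}\Big).
\]
Using the monotonicity of $\smax$ with respect to inclusion, one also obtains $f_{2}(q)\ge\max(0,\labs{a_{(2)}(q)/b_{j_{0}}(q)})$ for any fixed $j_{0}\in\{1,2,3\}$ by restricting to the three-element subset $\{\labs{a_{i}/b_{j_{0}}}\}_{i}$, and similar bounds freezing an index~$i$ instead.

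The elementary inequality $(x-y)^{+}\ge x^{+}-y^{+}$ then produces local lower bounds of the form
\[ f_{2}(q) \ge \bigl(\labs{a_{i}(q)}\bigr)^{+} - \bigl(\labs{b_{j}(q)}\bigr)^{+},\]
for indices $(i,j)$ that depend on~$q$. Although such bounds are locally weak (the sub-max can be much smaller than the local contribution to $h_{ab}$, typically when a single ratio $\abs{a_{i}/b_{j}}_{v}$ dominates the other eight), summing over $q\in L(Q)$ and exploiting the product formula $\sum_{q\in L(Q)}N_{q}\labs{a_{i}(q)}=0$ (for each fixed~$i$) lets us reassemble the resulting combinations of positive parts into projective heights of pairs or triples of coordinates. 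Lemma~\ref{lem:hauteurva} applied to the five-element tuple $(a_{i}/b_{1},b_{j}/b_{1})_{i,j}$ (obtained by scaling $Q$ so that $b_{1}$ becomes~$1$) then bounds $h_{ab}([Q])=h(a_{i}/b_{1},b_{j}/b_{1})$ in terms of the same projective heights, producing the desired inequality $h_{f_{2}}(Q)\ge C_{2}\,h_{ab}([Q])$ for an explicit $C_{2}>0$.

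The main obstacle is precisely the combinatorial nature of the sub-max: contrary to Lemma~\ref{lem:hf1} where a direct place-by-place majoration of the standard max was available, here the pointwise lower bounds on~$f_{2}$ can be very loose in isolation. The product-formula compensations across places therefore play an essential role—at a place $v$ where one ratio dominates and suppresses $f_{2}(q)$, the global identity $\sum_{q}N_{q}\labs{\cdot}=0$ ensures balancing contributions at the remaining places (typically at primes dividing the coordinates~$b_{j}$), which is where the bound is eventually saved.
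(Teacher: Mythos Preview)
Your proposal correctly identifies the difficulty but does not actually overcome it. The step you call ``reassembly'' is the whole content of the lemma, and you leave it as an assertion: you obtain place-by-place bounds $f_{2}(q) \ge (\labs{a_{i}(q)})^{+} - (\labs{b_{j}(q)})^{+}$ with indices $(i,j)$ depending on~$q$, then invoke the product formula $\sum_{q} N_{q}\labs{a_{i}(q)}=0$ for each \emph{fixed}~$i$. But the product formula applies only to fixed indices; once the index varies with the place you have nine different families and no mechanism for recombining them. The closing paragraph explains heuristically where compensation ought to occur without exhibiting any inequality that realises it. As written, this is a gap, not a proof.

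The paper avoids the reassembly problem entirely by a pointwise trick. Setting $P := \prod_{i=1}^{3} a_{i}/b_{i}$ and using $\max+\smax = \text{sum}-\min$ for three real numbers, one shows that at every point~$q$,
\[
\smax_{i,j}\Bigl(\biglabs{\tfrac{a_{i}(q)}{b_{j}(q)}}\Bigr)\ \ge\ \tfrac{1}{2}\,\max_{i,j}\Bigl(\biglabs{P(q)\,\tfrac{b_{j}(q)}{a_{i}(q)}}\Bigr).
\]
The right-hand side is a maximum over a \emph{fixed} family of nine invertible functions, so summing over places and applying Lemma~\ref{lem:hauteurva} is immediate; a second elementary manipulation, again via the functions $P\,b_{j}/a_{i}$, then controls $h_{ab}([Q])$. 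The introduction of the product~$P$ is precisely the missing idea: it absorbs the would-be product-formula cancellation into a local identity inside the logarithm, so that no global reassembly over place-dependent indices is needed.
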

\begin{proof}
Posons $P := \disp\prod_{i=1}^3 \dfrac{a_{i}}{b_{i}} \colon Y_{ab}' \to \R$. Pour tout $q\in Y_{ab}'$, on a
\begin{align*}
\smax_{1\le i,j\le 3}\big(\log\big(\bigabs{\frac{a_{i}(q)}{b_{j}(q)}}\big)\big) &= \smax_{1\le i\le 3}(\labs{a_{i}(q)}+\labs{b_{j}(q)^{-1}})\\
& \ge  \frac12 \Big( \max_{1\le i\le 3}(\labs{a_{i}(q)}) + \smax_{1\le i\le 3}(\labs{b_{j}(q)^{-1}}) \\ 
& \quad +  \smax_{1\le i\le 3}(\labs{a_{i}(q)}) + \max_{1\le i\le 3}(\labs{b_{j}(q)^{-1}})\Big)\\
& \ge \frac12 \Big( \sum_{i=1}^3 \labs{a_{i}(q)}  -\min_{1\le i\le 3}(\labs{a_{i}(q)}) \\ 
& \quad + \sum_{j=1}^3 \labs{b_{j}(q)^{-1}} - \min_{1\le j\le 3}(\labs{b_{j}(q)^{-1}})\Big)\\
& \ge  \frac12 \Big( \sum_{i=1}^3 \biglabs{\frac{a_{i}(q)}{b_{i}(q)}} \\ 
& \quad + \max_{1\le i\le 3}(\labs{a_{i}(q)^{-1}}) +  \max_{1\le j\le 3}(\labs{b_{j}(q)})\Big)\\
& \ge \frac12 \max_{1\le i,j\le 3}\big(\biglabs{P(q)\, \frac{b_{j}(q)}{a_{i}(q)}}\big).
\end{align*}

Soit $Q\in \cY_{ab}'(\Qbar)$. \`A l'aide de l'in\'egalit\'e pr\'ec\'edente et du lemme~\ref{lem:hauteurva}, on obtient
\[h_{f_{2}}(Q) \ge \frac1{20}\, \sum_{q\in L(Q)} N_q \max_{1\le i,j\le 3}\big(\bigabs{\biglabs{P(q)\, \frac{b_{j}(q)}{a_{i}(q)}}}\big).\]

Soient $i,j \in \cn{1}{6}$. Pour tout $q\in Y_{ab}'$, on a
\[ \Bigabs{\biglabs{\frac{b_{i}(q)}{b_{j}(q)}}} \le \Bigabs{\biglabs{P(q) \frac{b_{i}(q)}{a_{1}(q)}}} + \Bigabs{\biglabs{P(q) \frac{b_{j}(q)}{a_{1}(q)}}}\]
et 
\[ \Bigabs{\biglabs{\frac{a_{i}(q)}{a_{j}(q)}}} \le \Bigabs{\biglabs{P(q) \frac{b_{1}(q)}{a_{i}(q)}}} + \Bigabs{\biglabs{P(q) \frac{b_{1}(q)}{a_{j}(q)}}}.\]

Soit $k\in \cn{1}{3}$. Pour tout $q\in Y_{ab}'$, il existe $u,v \in \cn{1}{3}$ tels que
\[\biglabs{\frac{a_{k}(q)}{b_{1}(q)}} = \smax_{1\le i,j\le 3}\big(\biglabs{\frac{a_{i}(q)}{b_{j}(q)}}\big) + \biglabs{\frac{a_{k}(q)}{a_{u}(q)}} + \biglabs{\frac{b_{v}(q)}{b_{1}(q)}}.\]
Par cons\'equent, pour tout $q\in Y_{ab}'$, on a 
\[\biglabs{\frac{a_{k}(q)}{b_{1}(q)}} \le f_{2}(q) + \max_{1\le i,j\le 3} \big(\bigabs{\biglabs{\frac{a_{i}(q)}{a_{j}(q)}}}\big) + \max_{1\le i,j\le 3} \big(\bigabs{\biglabs{\frac{b_{i}(q)}{b_{j}(q)}}}\big)\]

En combinant les in\'egalit\'es pr\'ec\'edentes, on obtient finalement
\begin{align*}
h_{ab}([Q]) & = \sum_{q\in L(Q)} N_q \max_{1\le i,j\le 3}\Big(0, \biglabs{\frac{a_{i}(q)}{b_{1}(q)}}, \biglabs{\frac{b_{j}(q)}{b_{1}(q)}} \Big)\\
&\le h_{f_{2}}(Q) + 4 \sum_{q\in L(Q)} N_q \max_{1\le i,j\le 3}\Big(\Bigabs{\biglabs{P(q)\, \frac{b_{j}(q)}{a_{i}(q)}}}\Big)\\
&\le 81 \, h_{f_{2}}(Q).
\end{align*}
\end{proof}

\begin{theo}\label{th:hEhab}
Il existe $C_{0} \in \R_{>0}$ et $D_{0} \in \R$ telles que, pour tout $Q\in \cY_{ab}'(\Qbar)$, on ait
\[h_{\cE_{ab}}(Q) \ge C_{0}\, h_{ab}([Q]) +D_{0}.\]
\end{theo}
\begin{proof}
D\'efinissons les fonctions~$f_{1}$, $f_{2}$ et~$g_{1}$ comme dans les lemmes~\ref{lem:hf1} et~\ref{lem:E0smax}. Montrons que les trois conditions du th\'eor\`eme~\ref{thm:minoration} sont satisfaites pour $V=Y_{ab}'$.

i) Soient $s,t\in \R_{>0}$. On a 
\[V_{s} := \Big\{ e^{-s} \le \bigabs{\frac{u_{i}}{u_{j}}},\bigabs{\frac{u_{i}-u_{j}}{u_{j}}} \le e^s,\ 1\le i,j\le 6 \Big\} \subset Y_{ab}',\]
donc l'application $V_{s} \to \cU_{2}$ est propre. La condition s'en d\'eduit. 

ii) Pour tous $i,j,k$ distincts, on a
\[\biglabs{\frac{a_{i}-a_{j}}{a_{i}-a_{k}}} =  \biglabs{1 - \frac{a_{j}}{a_{i}}} - \biglabs{1-\frac{a_{k}}{a_{i}}}\]
et 
\[\biglabs{\frac{b_{j}}{b_{k}}\, \frac{b_{k}-b_{i}}{b_{j}-b_{i}}} =  \biglabs{\frac{b_{j}}{b_{k}}} + \biglabs{\frac{b_{k}}{b_{i}}-1} - \biglabs{\frac{b_{j}}{b_{i}}-1}.\]
La condition d\'ecoule alors du lemme~\ref{lem:majorationg1}.

iii) Cette condition d\'ecoule du lemme~\ref{lem:E0smax}.

On peut donc appliquer le th\'eor\`eme~\ref{thm:minoration}. Il assure que, pour tout $\eps\in \R_{>0}$, il existe $C_{0,\eps} \in \R_{>0}$ et $D_{0,\eps} \in \R$ telles que, pour tout $Q\in \cY_{ab}'(\Qbar)$, on ait
\[h_{\cE_{ab}}(Q) \ge C_{0,\eps}\, \Big(\sum_{q \in L_{f_{1}}(Q)} N_q f_{2}(q) - \eps \sum_{q \in L(Q)} N_q f_{1}(q)\Big) +D_{0,\eps},\]
d'o\`u 
\[ h_{\cE_{ab}}(Q) \ge C_{0,\eps} \, (h_{f_{2}}(Q) - \eps h_{f_{1}}(Q)) + D_{0,\eps},\]
en utilisant le fait que $f_{2}$ est nulle d\`es que~$f_{1}$ l'est. On conclut \`a l'aide des lemmes~\ref{lem:hf1} et~\ref{lem:hf2} en choisissant~$\eps$ assez petit.
\end{proof}

\begin{rema}
Signalons que tous les arguments pr\'esent\'es jusqu'ici s'appliquent encore si l'on travaille non plus sur un corps de nombres, mais sur un corps de fonctions de caract\'eristique diff\'erente de~2. En particulier, le th\'eor\`eme~\ref{th:hEhab} reste valable dans ce cadre, avec la m\^eme preuve. Plus pr\'ecis\'ement, on peut remplacer~$\Q$ par $k(T)$, o\`u $k$ est un corps de caract\'eristique diff\'erente de~2, et appliquer la th\'eorie en prenant comme espace de base non plus l'ouvert~$\cU_{2}$ de~$\cM(\Z)$, mais la droite projective de Berkovich~$\EP{1}{k}$.
\end{rema}

\begin{rema}\label{rem:PcPd}
Pour illustrer la port\'ee des m\'ethodes employ\'ees, montrons comment elles s'adaptent pour obtenir une minoration de l'\'energie mutuelle de deux syst\`emes dynamiques de la forme $P_{c} \colon z \mapsto z^2+c$ avec $c\in \Qbar$. 

Ce r\'esultat est bien plus simple \`a \'etablir que son analogue dans le cas des morphismes de Latt\`es. Expliquons, tout d'abord, comment adapter les constructions de la section~\ref{sec:Lattes}. Au lieu de l'espace de modules~$\cZ$, consid\'erons la droite affine $\wti\cZ:=\A^1_{\Z}$ avec coordonn\'ee~$u$. Notons~$\wti\cX$ la droite projective relative au-dessus de~$\wti\cZ$ et~$\Pi$  l'endomorphisme de~$\wti\cX$ polaris\'e de degr\'e~2 d\'efini par $z\mapsto z^2+u$, o\`u $z$ est une coordonn\'ee affine relative sur~$\wti\cX$.

Pour chaque point $z\in \wti\cZ^\an$, la fibre de~$\wti\cX^\an$ au-dessus de~$z$ s'identifie \`a $\EP{1}{\cH(z)}$, et l'on dispose d'une mesure d'\'equilibre $\tilde\mu_{u(z)}$ associ\'ee \`a l'endomorphisme induit par~$\Pi$.

Consid\'erons maintenant deux copies~$\wti\cZ_{c}$ et~$\wti\cZ_{d}$ de~$\wti\cZ$ avec coordonn\'ees respectives~$c$ et~$d$ et posons $\wti\cZ_{cd} := \wti\cZ_{c} \times_{\Z} \wti\cZ_{d}$. D'apr\`es \cite[Corollaire~D]{DynamiqueI}, la fonction
\[ \wti\cE_{cd} \colon z\in \wti\cZ^\an_{cd} \mapstoo \la \wti\mu_{c(z)}, \wti\mu_{d(z)} \ra \in\R\]
est $\log$-flottante et continue. Notons que, par d\'efinition, pour tout $Q \in \wti\cZ_{cd}(\Qbar)$, on a
\[ \la P_{c(Q)},P_{d(Q)} \ra = h_{\wti\cE_{cd}}(Q).\]

Finalement, notons $\wti\cY_{cd}$ l'ouvert de Zariski de $\wti\cZ_{cd}$ d\'efini par la condition $c\ne d$.

\medbreak

Suivant le principe expos\'e \`a la section~5, pour minorer la fonction $h_{\wti\cE_{cd}}$ sur $\wti\cY_{cd}(\Qbar)$, nous allons chercher \`a minorer $\wti\cE_{cd}$ sur la fibre centrale. Pour ce faire, reprenons les estimations \'etablies par L.~DeMarco, H.~Krieger et H.~Ye dans \cite[Theorem~5.1]{DKY2}~: pour tout point $z$ de $\wti\cY_{cd}$ au-dessus de~$\va_{0}$, on a
\[\la \wti\mu_{c(z)}, \wti\mu_{d(z)} \ra \ge \frac14\, \log^+(\abs{c(z)-d(z)}).\]
De plus, si $\abs{c(z)}=\abs{d(z)}>1$ et $\abs{c(z)-d(z)} > \abs{c(z)}^{-1/2}$, alors on a
\[ \la \wti\mu_{c(z)}, \wti\mu_{d(z)} \ra \ge \frac1{16}\, \log{\abs{c(z)}}.\]
(Ces minorations sont \'enonc\'ees pour le compl\'et\'e d'un corps de nombres en une place finie ne divisant pas~2, mais les preuves restent valables pour tout corps valu\'e complet ultram\'etrique de caract\'eristique r\'esiduelle diff\'erente de~2, et en particulier sur la fibre centrale.) 

On en d\'eduit ais\'ement que, pour tout point $z$ de $\wti\cY_{cd}$ au-dessus de~$\va_{0}$, on a
\[\la \wti\mu_{c(z)}, \wti\mu_{d(z)} \ra \ge \frac1{12} \Big(  \log(\abs{c(z)-d(z)}) +\frac12\,  \log^+(\max(\abs{c(z)},\abs{d(z)})) \Big).\]

Nous pouvons maintenant appliquer le th\'eor\`eme~\ref{thm:minoration} avec les fonctions 
\[\begin{cases}
f_{1} = \max( \labs{c},\labs{d},\abs{\labs{c-d}}),\\ 
f_{2} = \frac1{12} \, \big(  \log(\abs{c-d}) +\frac12\,  \log^+(\max(\abs{c},\abs{d})) \big),\\
g_{1} = -\infty.
\end{cases}\] 
En remarquant que, pour tout $z \in \wti\cY_{cd}$, la condition $f_{1}(z)=0$ entra\^ine $\wti\cE_{cd}(z)=0$, on en d\'eduit qu'il existe $C'\in \R_{>0}$ et $D'\in \R$ telles que, pour tout $Q \in \wti\cY_{cd}(\Qbar)$, on a 
\[h_{\wti\cE_{cd}}(Q) \ge C' \, h(c(Q),d(Q)) + D',\]
o\`u $h(\wc,\wc)$ d\'esigne la hauteur logarithmique usuelle sur $\Qbar^2$. Autrement dit, pour tous $c \ne d \in \Qbar$, on a 
\[ \la P_{c},P_{d} \ra \ge C' \, h(c,d) + D'.\]
Nous obtenons ainsi une d\'emonstration rapide de \cite[Theorem~1.7]{DKY2} (sans valeur explicite pour les constantes). Nous renvoyons \`a~\textit{ibid.} pour l'application \`a la majoration du nombre de points pr\'ep\'eriodiques communs \`a~$P_{c}$ et~$P_{d}$, uniform\'ement en~$c$ et~$d$.
\end{rema}

Revenons maintenant au cas des morphismes de Latt\`es. 

\begin{theo}\label{th:hEm0}
Il existe $m_{0} \in \R_{>0}$ telle que, pour tout $Q \in \cY_{ab}'(\Qbar)$, on ait $h_{\cE_{ab}}(Q) \ge m_{0}$.
\end{theo}
\begin{proof}
On suit fid\`element \cite[section~6.3]{DKY}. Consid\'erons la fonction de~$Y_{ab}'$ dans~$\R$ d\'efinie par
\[ m := \min_{i\ne j} \Big( \bigabs{\frac{u_{i}}{u_{j}}}, \bigabs{\frac{u_{i} - u_{j}}{u_{j}}}, \bigabs{\frac{u_{i}}{u_{i} - u_{j}}} \Big).\]

Rappelons que la fonction $\cE_{ab} \colon Y_{ab}' \to \R_{\ge 0}$ est continue. En outre, d'apr\`es la remarque~\ref{rem:mugammaC} et les propri\'et\'es de l'\'energie mutuelle des mesures, elle est strictement positive en tout point de~$\pi^{-1}(a_{\infty})$.

Pour tout $u \in \R_{>0}$, 
\[K_{u} := \{ y \in \pi^{-1}(a_{\infty}) : m(y) \ge u\}\]
est une partie compacte non vide de~$\pi^{-1}(a_{\infty})$. Notons $e_{0} >0$ le minimum de~$\cE_{ab}$ sur~$K_{1}$.

Pour tout $e \in \intoo{0,e_{0}}$, posons 
\[ s(e) := \sup(\{m(y) : y \in \pi^{-1}(a_{\infty}), \cE_{ab}(y) \le e\}) \in \intff{0,1}.\]
La fonction $s \colon \intoo{0,e_{0}} \to  \intff{0,1}$ est d\'ecroissante.

Posons $u_{0} := \inf(\{s(e) : e\in \intoo{0,e_{0}} \})$. Supposons, par l'absurde, que $u_{0}>0$. Soit $u_{1}\in \intoo{0,u_{0}}$ et notons $m>0$ le minimum de~$\cE_{ab}$ sur la partie compacte~$K_{u_{1}}$ de~$\pi^{-1}(a_{\infty})$. Alors, pour tout $e \in \intoo{0,m}$, on a $s(e) \le u_{1}$ et on aboutit \`a une contradiction. Nous avons ainsi montr\'e que $u_{0}=0$. On en d\'eduit que $\lim_{e\to 0} s(e) = 0$.

\medbreak

Supposons, par l'absurde, qu'il existe une suite $(Q_{n})_{n\in \N^*}$ de $\cY_{ab}'(\Qbar)$ telle que 
\[ \lim_{n\to +\infty} h_{\cE_{ab}}(Q_{n}) = 0.\]
Quitte \`a extraire une sous-suite, on peut supposer que, pour tout $n\in \N^*$, on a $h_{\cE_{ab}}(Q_{n}) \le \frac1{2n}$. Pour tout $n\in \N^*$, soit~$K_{n}$ un corps de nombres tel que $Q_{n} \in \cY'_{ab}(K_{n})$.

Soit $n\in \N^*$. Puisque $\cE_{ab}$ est \`a valeurs positives, on a
\[\sum_{q \in L_{\infty}(Q_{n})} N_q\, \cE_{ab}(q) \le \frac1{2n}.\]
Posons $M_{n} := \{ q \in L_{\infty}(Q_{n}) : \cE_{ab}(q) \le \frac1 n\}$. Pour tout $q\in M_{n}$, on a alors 
\[m(q) =  \min_{i\ne j} \Big( \bigabs{\frac{u_{i}(q)}{u_{j}(q)}}, \bigabs{\frac{u_{i}(q) - u_{j}(q)}{u_{j}(q)}}, \bigabs{\frac{u_{i}(q)}{u_{i}(q) - u_{j}(q)}} \Big) \le s\big(\frac1n\big).\]
Remarquons que l'on a
\[ \sum_{q \in L_{\infty}(Q_{n}) \setminus M_{n}} N_q\, \cE_{ab}(q) > \frac1n  \sum_{q \in L_{\infty}(Q_{n}) \setminus M_{n}} N_q,\]
d'o\`u  
\[  \sum_{q \in M_{n}} N_q =  1 - \sum_{q \in L_{\infty}(Q_{n}) \setminus M_{n}} N_q > \frac12.\]
Il existe un sous-ensemble~$M'_{n}$ de~$M_{n}$ et une fonction $v \colon Y_{ab}' \to \R_{>0}$ de la forme $\frac{u_{i}}{u_{j}}$ ou $\big(\frac{u_{i}}{u_{j}}-1)^{\pm1}$ avec  $i,j\in \cn{1}{6}$, $i \ne j$, tels que
\[ \forall q\in M'_{n},\ \abs{v(q)} \le s\big(\frac1n\big)\]
et 
\[  \sum_{q \in M'_{n}} N_q  > \frac1{18}.\]
Quitte \`a extraire une sous-suite, on peut supposer que~$v$ est ind\'ependante de~$n$.

On a alors 
\[  \sum_{q\in L(Q_{n})} N_q \, \abs{\labs{v(q)}} > - \frac{1}{18} \,\log\big(s\big(\frac 1 n\big)\big) \xrightarrow[n\to +\infty]{} +\infty,\]
d'o\`u, d'apr\`es le lemme~\ref{lem:hf1}, $\lim_{n\to+\infty}h_{ab}([Q_{n}]) =+\infty$, ce qui contredit le th\'eor\`eme~\ref{th:hEhab}.
\end{proof}

\section{Retour aux courbes elliptiques}\label{sec:courbeselliptiques}

Dans cette section finale, nous appliquons les in\'egalit\'es obtenues jusqu'ici pour \'etudier les images de points de torsion ou de petite hauteur sur des courbes elliptiques.

Soient $K$ un corps de nombres, $E$ une courbe elliptique sur~$K$ telle que $E[2]\subset E(K)$ et $\pi \colon E \to \P^1_{K}$ une projection standard. Rappelons que l'on peut associer \`a ces donn\'ees une mesure ad\'elique~$\mu_{(E,\pi)}$ (\cf~notation~\ref{nota:muQ}) et une hauteur~$h_{\mu_{(E,\pi)}}$ (\cf~notation~\ref{nota:[F]}), que nous noterons \'egalement~$h_{(E,\pi)}$, par souci de simplicit\'e. Cette derni\`ere n'est autre que la hauteur dynamique associ\'ee au morphisme de Latt\`es correspondant \`a~$(E,\pi)$. Rappelons \'egalement que, d'apr\`es la remarque~\ref{rem:energieL}, le corps de nombres sur laquelle est d\'efinie la courbe elliptique est sans importance pour les calculs d'\'energie. 

Commen\c cons par r\'e\'ecrire le th\'eor\`eme~\ref{th:hEm0} dans le cadre des courbes elliptiques.

\begin{theo}\label{th:minorationenergie}
Il existe $m_{0} \in \R_{>0}$ telle que, pour toutes courbes elliptiques munies de projections standards $(E_{1},\pi_{1})$ et $(E_{2},\pi_{2})$ sur~$\Qbar$ avec $\pi_{1}(E_{1}[2]) \ne \pi_{2}(E_{2}[2])$, on ait 
\[ \la \mu_{(E_{1},\pi_{1})},\mu_{(E_{2},\pi_{2})} \ra \ge m_{0}.\]
\end{theo}
\begin{proof}
Le raisonnement de la fin de la section~\ref{sec:Lattesfamille} permet de d\'eduire cet \'enonc\'e de celui du th\'eor\`eme~\ref{th:hEm0}.

De fa\c con plus pr\'ecise, consid\'erons $(E_{1},\pi_{1})$ et $(E_{2},\pi_{2})$ comme dans l'\'enonc\'e. D'apr\`es les rappels effectu\'es \`a la section~\ref{sec:mesuresdef}, la quantit\'es $\la\mu_{P_{1}},\mu_{P_{2}}\ra$ est invariante par changement de coordonn\'ees sur~$\P^1$. On peut donc supposer que $\pi_{1}(E_{1}) = \{a_{1},a_{2},a_{3},\infty\}$ et $\pi_{2}(E_{2}) = \{b_{1},b_{2},b_{3},0\}$, avec $a_{1},a_{2},a_{3}, b_{1},b_{2},b_{3} \in \Qbar^\ast$. Posons $P_{1} := (a_{1},a_{2},a_{3},\infty) \in \P^1(\Qbar)^4$, $P_{2} := (b_{1},b_{2},b_{3},0) \in \Qbar^4$ et $Q_{12} := (a_{1} , a_{2} , a_{3} , b_{1} , b_{2} , b_{3}) \in (\Qbar^*)^5$. On a alors $\mu_{(E_{1},\pi_{1})} = \mu_{P_{1}}$ et $\mu_{(E_{2},\pi_{2})} = \mu_{P_{2}}$, $Q_{12}$ d\'efinit un point de $\cY'_{ab}(\Qbar)$ et on a $h_{\cE_{ab}}(Q_{12}) = \la \mu_{P_{1}},\mu_{P_{2}}\ra$. Le r\'esultat d\'ecoule maintenant du th\'eor\`eme~\ref{th:hEm0}.
\end{proof}

Pour conclure, nous utilisons un r\'esultat de majoration de l'\'energie mutuelle en termes de hauteurs d\^u \`a Thomas Gauthier (\cf~\cite[Theorem~B]{GauthierHoelderEstimates}). Il vaut pour des familles quelconques de syst\`emes dynamiques sur~$\P^1$. Nous nous contentons de l'\'enoncer ici dans le cas particulier qui nous int\'eresse.\footnote{Le r\'esultat original de Th. Gauthier est r\'edig\'e avec~$\Q$ pour corps de base, mais vaut en r\'ealit\'e sur tout corps de nombres. D'autre part, il fait intervenir une certaine hauteur sur $\Rat_{4} \times \Rat_{4}$. Notre \'enonc\'e s'en d\'eduit en tirant en arri\`ere par un morphisme $\cY'_{ab} \to \Rat_{4} \times \Rat_{4}$.}

\begin{theo}\label{th:CsurS}
Il existe une constante $C_{1}\in \R_{>0}$ satisfaisant la propri\'et\'e suivante. Soit $K$ un corps de nombres. Soient $(a_{1},a_{2},a_{3}), (b_{1},b_{2},b_{3}) \in (K^*)^3$ tels que, pour tous $j\ne j'$, $a_{j} \ne a_{j'}$ et $b_{j} \ne b_{j'}$. Posons $P_{1} := (a_{1},a_{2},a_{3},\infty) \in (\P^1(K))^4$, $P_{2} := (b_{1},b_{2},b_{3},0) \in K^4$ et $Q_{12} := (a_{1} , a_{2} , a_{3} , b_{1} , b_{2} , b_{3}) \in (K^*)^5$. Pour toute partie non vide $F$ de~$K$ et tout $\delta \in \intoo{0,1}$, on a 
\[\la \mu_{P_{1}},\mu_{P_{2}}\ra \le h_{\mu_{P_{1}}}(F) + h_{\mu_{P_{2}}}(F)+ C_{1} \, \Big(\delta - \frac{\log(\delta)}{\sharp F}\Big)\,  (h([Q_{12}])+1).\]
\qed
\end{theo}

\begin{theo}\label{th:BFTfinal}
Il existe $m,M \in \R_{>0}$ telles que, pour tout corps de nombres~$K$ et toutes courbes elliptiques~$E_{1}$ et~$E_{2}$ sur $K$ avec $E_{1}[2]\subset E_{1}(K)$ et $E_{1}[2]\subset E_{1}(K)$ et toutes projections standards $\pi_{1} \colon E_{1}\to \P^1_{K}$ et $\pi_{2} \colon E_{2}\to \P^1_{K}$ avec $\pi_{1}(E_{1}[2]) \ne \pi_{2}(E_{2}[2])$, on ait
\[\sharp \big\{ x \in \P^1(\bar K) : h_{(E_{1},\pi_{1})}(x) + h_{(E_{2},\pi_{2})}(x) \le m\big\} \le M.\]
En particulier, on a
\[ \sharp \big( \pi_{1}(E_{1}[\infty]) \cap \pi_{2}(E_{2}[\infty])\big) \le M.  \]
\end{theo}
\begin{proof}
Reprenons les notations des th\'eor\`emes~\ref{th:hEhab}, \ref{th:minorationenergie} et~\ref{th:CsurS}. 
Posons $m := \frac{m_{0}}2$. 

Soit $K$ un corps de nombres et soient $(E_{1},\pi_{1})$ et $(E_{2},\pi_{2})$ comme dans l'\'enonc\'e. Quitte \`a effectuer un changement de coordonn\'ees sur~$\P^1$, on peut supposer que $\pi_{1}(E_{1}) = \{a_{1},a_{2},a_{3},\infty\}$ et $\pi_{2}(E_{2}) = \{b_{1},b_{2},b_{3},0\}$, avec $a_{1},a_{2},a_{3}, b_{1},b_{2},b_{3} \in K^\ast$. Posons $P_{1} := (a_{1},a_{2},a_{3},\infty) \in (\P^1(K))^4$, $P_{2} := (b_{1},b_{2},b_{3},0) \in K^4$ et $Q_{12} := (a_{1} , a_{2} , a_{3} , b_{1} , b_{2} , b_{3}) \in (K^*)^5$. 
On a alors $\mu_{(E_{1},\pi_{1})} = \mu_{P_{1}}$ et $\mu_{(E_{2},\pi_{2})} = \mu_{P_{2}}$. En outre, $Q_{12}$ d\'efinit un point de $\cY'_{ab}(K)$ et on a
\[ h_{\cE_{ab}}(Q_{12}) = \la \mu_{P_{1}},\mu_{P_{2}}\ra \textrm{ et } h_{ab}([Q_{12}])=h([Q_{12}]).\]

Soit $F$ une partie finie non vide de
\[\{ x \in \bar K : h_{(E_{1},\pi_{1})}(x) + h_{(E_{2},\pi_{2})}(x) \le m\}.\] 
Il suffit de trouver $M\in \R_{>0}$, ind\'ependante des donn\'ees telle que $\sharp F\le M$. Soit $L$ une extension de~$K$ contenant~$F$. D'apr\`es la remarque~\ref{rem:energieL}, on peut calculer toutes les \'energies apr\`es extension des scalaires \`a~$L$.

Il existe $h_{0} \in \R_{>0}$ tel que, pour tout $h\ge h_{0}$, on ait $\frac{C_{0}\, h + D_{0}-m}{h+1} \ge \frac12\, C_{0}$.

$\bullet$ Supposons que $h([Q_{12}]) \ge h_{0}$.

D'apr\`es le th\'eor\`eme~\ref{th:hEhab} et le th\'eor\`eme~\ref{th:CsurS} appliqu\'e avec $\delta_{0} := \min\big(\frac12,\frac{C_{0}}{4C_{1}}\big)$, on a 
\[ C_{0} \, h([Q_{12}]) +D_{0} \le m + C_{1}\, \Big(\delta_{0} - \frac{\log(\delta_{0})}{\sharp F}\Big)\,  (h([Q_{12}])+1),\]
d'o\`u
\[ \sharp F \le \frac{ - C_{1} \log(\delta_{0})}{\frac{C_{0}\, h([Q_{12}]) + D_{0}-m}{h([Q_{12}])+1} - C_{1}\delta_{0}} \le \frac{ - 4C_{1} \log(\delta_{0})}{ C_{0}}.\]

$\bullet$ Supposons que $h([Q_{12}]) \le h_{0}$.

D'apr\`es le th\'eor\`eme~\ref{th:minorationenergie} et le th\'eor\`eme~\ref{th:CsurS} appliqu\'e avec $\delta_{1} := \min\big(\frac12,\frac{m_{0}}{4(h_{0}+1)C_{1}}\big)$, on a  
\[ m_{0} \le m + C_{1}\, \Big(\delta_{1} - \frac{\log(\delta_{1})}{\sharp F}\Big)\,  (h([Q_{12}])+1),\]
d'o\`u
\[\frac{- C_{1} \log(\delta_{1})}{\sharp F} \ge \frac{m_{0} - m}{h([Q_{12}])+1} - C_{1} \delta_{1} \ge  \frac{m_{0}}{4(h_{0}+1)}\]
et
\[\sharp F \le  \frac{- 4(h_{0}+1) C_{1} \log(\delta_{1})}{m_{0}}.\]

\medskip

Le r\'esultat s'en d\'eduit.
\end{proof}

Par un argument standard, la derni\`ere partie du r\'esultat s'\'etend \`a tout corps de caract\'eristique nulle.

\begin{coro}\label{cor:BFTfinal}
Il existe $M \in \R_{>0}$ telle que, pour tout corps~$L$ de caract\'eristique~0 et toutes courbes elliptiques munies de projections standards $(E_{1},\pi_{1})$ et $(E_{2},\pi_{2})$ sur~$L$ avec $\pi_{1}(E_{1}[2]) \ne \pi_{2}(E_{2}[2])$, on ait
\[ \sharp \big( \pi_{1}(E_{1}[\infty]) \cap \pi_{2}(E_{2}[\infty])\big) \le M.\]
\end{coro}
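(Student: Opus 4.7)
The plan is a specialization argument descending from characteristic zero to a number field, in the spirit of \cite[\S~8.2]{DKY}, establishing the corollary with the \emph{same} constant $M$ as in Theorem~\ref{th:BFTfinal}. Fix $L$ of characteristic zero, data $(E_{1}, \pi_{1})$, $(E_{2}, \pi_{2})$ over $L$ with $\pi_{1}(E_{1}[2]) \ne \pi_{2}(E_{2}[2])$, and suppose there are $N$ distinct points $x_{1}, \dotsc, x_{N}$ in $\pi_{1}(E_{1}[\infty]) \cap \pi_{2}(E_{2}[\infty])$; the goal is $N \le M$. Replacing $L$ by its algebraic closure costs nothing, so we assume $L$ algebraically closed; then $E_{i}[2] \subset E_{i}(L)$, and for each $(i,j) \in \{1,2\} \times \cn{1}{N}$ we choose a torsion preimage $P_{j}^{(i)} \in E_{i}[\infty]$ of $x_{j}$ under $\pi_{i}$.

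Next, I would spread all this data over a finitely generated $\Q$-subalgebra $A \subset L$ containing the (finitely many) coordinates involved: Weierstrass coefficients of the $E_{i}$, coefficients of the $\pi_{i}$, and the affine coordinates of $\pi_{i}(E_{i}[2])$, $x_{j}$, and $P_{j}^{(i)}$. By inverting finitely many nonzero elements of $A$, one may further assume that: (a)~the $E_{i}$ extend to elliptic schemes $\mathcal{E}_{i}/\Spec(A)$ with standard projections $\pi_{i}\colon \mathcal{E}_{i} \to \P^1_{A}$; (b)~the sections $x_{1}, \dotsc, x_{N}$ of $\P^1_{A}$ are fiberwise pairwise distinct; (c)~the four sections of $\pi_{i}(\mathcal{E}_{i}[2])$ are pairwise distinct for each $i$, and $\pi_{1}(\mathcal{E}_{1}[2]) \ne \pi_{2}(\mathcal{E}_{2}[2])$ as sets on every fiber. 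Each condition in (a)--(c) is a non-vanishing condition on finitely many elements of $A$, hence can be enforced by localization.

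The scheme $\Spec(A)$ is then a nonempty affine variety over $\Q$; it admits closed points, and any such $p$ has residue field $\kappa := \kappa(p)$ a finite extension of $\Q$, i.e., a number field. Specializing at $p$ yields $\tilde{E}_{i}/\kappa$, $\tilde{\pi}_{i}$, and $N$ distinct points $\tilde{x}_{j} \in \P^1(\kappa)$ with $\tilde{\pi}_{1}(\tilde{E}_{1}[2]) \ne \tilde{\pi}_{2}(\tilde{E}_{2}[2])$ by (b)--(c). The crucial point is that torsion persists under specialization in characteristic zero: the identity $n P_{j}^{(i)} = O$ on $\mathcal{E}_{i}/\Spec(A)$ holds at the generic point, and since $\mathcal{E}_{i}$ is separated, the locus where two sections coincide is closed, so the identity holds on all of $\Spec(A)$; specialization at $p$ then gives $n\tilde{P}_{j}^{(i)} = \tilde{O}$, so that $\tilde{x}_{j} = \tilde{\pi}_{i}(\tilde{P}_{j}^{(i)}) \in \tilde{\pi}_{i}(\tilde{E}_{i}[\infty])$ for both $i$.

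Finally, after passing to a finite extension $\kappa'$ of $\kappa$ so that $\tilde{E}_{1}[2], \tilde{E}_{2}[2] \subset \tilde{E}_{i}(\kappa')$ (still a number field), Theorem~\ref{th:BFTfinal} applied to $(\kappa', \tilde{E}_{1}, \tilde{E}_{2}, \tilde{\pi}_{1}, \tilde{\pi}_{2})$ forces $N \le M$. The only subtle point is the choice of $A$ guaranteeing (a)--(c) and the preservation of torsion; since every hypothesis of Theorem~\ref{th:BFTfinal} cuts out an open condition, the argument amounts to the observation that any finitely generated $\Q$-algebra with nonempty spectrum has a closed point with number-field residue field.
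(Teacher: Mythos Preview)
Your argument is correct and is precisely the specialization argument the paper has in mind: the paper's proof consists solely of the sentence ``Par un argument de sp\'ecialisation (\cf~\cite[\S 8.2]{DKY}), la derni\`ere partie du r\'esultat s'\'etend \`a tout corps de caract\'eristique nulle'', and your proposal spells this out in detail. The only minor imprecision is that condition~(c) (inequality of the two $4$-element sets $\pi_i(\mathcal{E}_i[2])$ on every fiber) is not literally a single non-vanishing condition but a finite Boolean combination thereof; it is nonetheless an open condition on $\Spec(A)$, as you note in your final paragraph, so the localization step goes through.
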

\begin{proof}
Consid\'erons la constante~$M$ donn\'ee par le th\'eor\`eme~\ref{th:BFTfinal}. On construit un sch\'ema~$S_{M+1}$ de type fini sur~$\Q$ dont les points sur toute extension~$L$ de~
$\Q$ correspondent \`a la donn\'ee de~:
\begin{enumerate}[i)] 
\item une paire de courbes elliptiques avec projections standards $(E_{1},\pi_{1})$ et $(E_{2},\pi_{2})$ sur~$L$ avec $E_{1}[2]\subset E_{1}(L)$, $E_{2}[2]\subset E_{2}(L)$ et $\pi_{1}(E_{1}[2]) \ne \pi_{2}(E_{2}[2])$ ;
\item $M+1$ points distincts de~$\P^1(L)$ contenus dans $\pi_{1}(E_{1}[\infty]) \cap \pi_{2}(E_{2}[\infty])$.
\end{enumerate}
On peut r\'ealiser~$S_{M+1}$ comme un sous-sch\'ema de $\big(\cZ_{ab}\otimes_{\Zud}\Q \big) \times_{\Q} (\P^1_{\Q})^{M+1}$.

Le th\'eor\`eme~\ref{th:BFTfinal} assure que $S_{M+1}$ ne contient aucun point d\'efini sur~$\Qbar$. D'apr\`es le Nullstellensatz, il est donc vide. Le r\'esultat s'ensuit.
\end{proof}

\nocite{}
\bibliographystyle{alpha}
\bibliography{../../biblio}

\begin{thebibliography}{{Fav}20}

\bibitem[Ber90]{rouge}
Vladimir~G. Berkovich.
\newblock {\em Spectral theory and analytic geometry over non-{A}rchimedean
  fields}, volume~33 of {\em Mathematical Surveys and Monographs}.
\newblock American Mathematical Society, Providence, RI, 1990.

\bibitem[BFT18]{BFT}
Fedor Bogomolov, Hang Fu, and Yuri Tschinkel.
\newblock Torsion of elliptic curves and unlikely intersections.
\newblock In {\em Geometry and physics. {V}ol. {I}}, pages 19--37. Oxford Univ.
  Press, Oxford, 2018.

\bibitem[BT07]{BogomolovTschinkelSmallFields}
Fedor Bogomolov and Yuri Tschinkel.
\newblock Algebraic varieties over small fields.
\newblock In {\em Diophantine geometry}, volume~4 of {\em CRM Series}, pages
  73--91. Ed. Norm., Pisa, 2007.

\bibitem[CS93]{CallSilverman}
Gregory~S. Call and Joseph~H. Silverman.
\newblock Canonical heights on varieties with morphisms.
\newblock {\em Compositio Math.}, 89(2):163--205, 1993.

\bibitem[DKY20]{DKY}
Laura {DeMarco}, Holly {Krieger}, and Hexi {Ye}.
\newblock {Uniform Manin-Mumford for a family of genus 2 curves.}
\newblock {\em {Ann. Math. (2)}}, 191(3):949--1001, 2020.

\bibitem[DKY22]{DKY2}
Laura Demarco, Holly Krieger, and Hexi Ye.
\newblock Common preperiodic points for quadratic polynomials.
\newblock {\em J. Mod. Dyn.}, 18:363--413, 2022.

\bibitem[DM24]{DeMarcoMavrakiDynamicsonP1}
Laura DeMarco and Niki~Myrto Mavraki.
\newblock Dynamics on {{\(\mathbb{P}^1\)}}: preperiodic points and pairwise
  stability.
\newblock {\em Compos. Math.}, 160(2):356--387, 2024.

\bibitem[{Fav}20]{FavreEndomorphisms}
Charles {Favre}.
\newblock {Degeneration of endomorphisms of the complex projective space in the
  hybrid space}.
\newblock {\em {J. Inst. Math. Jussieu}}, 19(4):1141--1183, 2020.

\bibitem[Fil17]{Fili}
Paul Fili.
\newblock A metric of mutual energy and unlikely intersections for dynamical
  systems.
\newblock arXiv, 2017.
\newblock \url{https://arxiv.org/abs/1708.08403}.

\bibitem[FR06]{FRLEquidistribution}
Charles {Favre} and Juan {Rivera-Letelier}.
\newblock {\'Equidistribution quantitative des points de petite hauteur sur la
  droite projective}.
\newblock {\em {Math. Ann.}}, 335(2):311--361, 2006.

\bibitem[FR10]{FRLErgodique}
Charles {Favre} and Juan {Rivera-Letelier}.
\newblock {Th\'eorie ergodique des fractions rationnelles sur un corps
  ultram\'etrique}.
\newblock {\em {Proc. Lond. Math. Soc. (3)}}, 100(1):116--154, 2010.

\bibitem[Gau23]{GauthierGDT}
Thomas Gauthier, 2023.
\newblock Expos\'e \`a l'IMJ.

\bibitem[Gau24]{GauthierHoelderEstimates}
Thomas Gauthier.
\newblock H\"older estimates and uniformity in arithmetic dynamics, 2024.

\bibitem[GGK21]{GaoGeKuehne}
Ziyang Gao, Tangli Ge, and Lars K\"uhne.
\newblock The uniform {M}ordell-{L}ang conjecture.
\newblock arXiv, 2021.
\newblock \url{https://arxiv.org/abs/2105.15085}.

\bibitem[K{\"u}h23]{KuehneRBC}
Lars K{\"u}hne.
\newblock The relative {Bogomolov} conjecture for fibered products of elliptic
  curves.
\newblock {\em J. Reine Angew. Math.}, 795:243--270, 2023.

\bibitem[LP24]{CTC}
Thibaud Lemanissier and J{\'e}r{\^o}me Poineau.
\newblock {\em Espaces de {Berkovich} globaux. {Cat{\'e}gorie}, topologie,
  cohomologie}, volume 353 of {\em Prog. Math.}
\newblock Cham: Birkh{\"a}user, 2024.

\bibitem[Poi10]{A1Z}
J\'er\^ome Poineau.
\newblock La droite de {B}erkovich sur~$\mathbf{Z}$.
\newblock {\em Ast\'erisque}, 334:xii+284, 2010.

\bibitem[Poi13]{EtudeLocale}
J{\'e}r{\^o}me Poineau.
\newblock Espaces de {B}erkovich sur {$\mathbf Z$}: \'etude locale.
\newblock {\em Invent. Math.}, 194(3):535--590, 2013.

\bibitem[Poi22]{DynamiqueI}
J\'er\^ome Poineau.
\newblock Dynamique analytique sur $\mathbf{Z}$. {I} : {M}esures d'\'equilibre
  sur une droite projective relative.
\newblock arXiv, 2022.
\newblock \url{https://arxiv.org/abs/2201.08480}.

\bibitem[PST12]{PetscheSzpiroTucker}
Clayton Petsche, Lucien Szpiro, and Thomas~J. Tucker.
\newblock A dynamical pairing between two rational maps.
\newblock {\em Trans. Amer. Math. Soc.}, 364(4):1687--1710, 2012.

\bibitem[Ray83]{RaynaudManinMumford}
Michel Raynaud.
\newblock Courbes sur une vari{\'e}t{\'e} ab{\'e}lienne et points de torsion.
\newblock {\em Invent. Math.}, 71:207--233, 1983.

\bibitem[{Zha}95]{ZhangSmallPoints}
Shouwu {Zhang}.
\newblock {Small points and adelic metrics}.
\newblock {\em {J. Algebr. Geom.}}, 4(2):281--300, 1995.

\end{thebibliography}
\end{document}